\documentclass[11pt,reqno]{amsart}
\usepackage{amssymb}
\usepackage{amsmath}
\usepackage{mathrsfs}
\usepackage{amssymb, amsmath, amsfonts}
\usepackage{bbm}
\usepackage{mathabx,url}

\usepackage{color}

\usepackage[hidelinks]{hyperref}
\usepackage{amsmath, amssymb}
\usepackage{enumitem}

\makeatletter
\def\tank#1{\protected@xdef\@thanks{\@thanks
		\protect\footnotetext[0]{#1}}}
\def\bigfoot{
	
	\@footnotetext}
\makeatother

\newcommand{\ea}{\end{array}}

\allowdisplaybreaks
\numberwithin{equation}{section}
\newtheorem{theorem}{Theorem}[section]
\newtheorem{lemma}{Lemma}[section]
\newtheorem{proposition}{Proposition}[section]

\newtheorem{remark}{Remark}[section]

\newtheorem{definition}{Definition}[section]

\def\beq{\begin{equation}}
\def\nneq{\end{equation}}

\def\bthm{\begin{theorem}}
\def\nthm{\end{theorem}}

\title[SBP and Chung's LIL for Gaussian Volterra processes]{Small ball probabilities and Chung's law of the iterated logarithm for Gaussian Volterra processes with power-type kernels}

\author[M. Tong]{Mengxin Tong}
\address[]{Mengxin  Tong, School of Mathematics and Statistics,  Wuhan University,  Wuhan, 430072,
China.}
\email{mengxintong@whu.edu.cn}

\author[R. Wang]{Ran Wang}
\address[]{Ran Wang, School of Mathematics and Statistics,  Wuhan University,  Wuhan, 430072,
China.}
\email{rwang@whu.edu.cn}

\author[Q. Yang]{Qingshan Yang}
\address[ ]{Qingshan Yang, School of Mathematics and Statistics, Northeast Normal University, Changchun,
130024, China.}\email{yangqr66@gmail.com}

\date{}
\begin{document}
\maketitle

{\bfseries Abstract:}  
 Consider the Gaussian Volterra process introduced by Mishura and Shklyar
\cite{MS22a,MS22b},
$$
X(t)
=
\int_0^t r^\alpha
\left(
\int_r^t u^\beta (u-r)^\gamma\,du
\right)dW_r,
\qquad t\ge 0,
$$
where
$$
\alpha>-\frac12,
\quad
\gamma\in\left(-1,-\frac12\right),
\quad
H:=\alpha+\beta+\gamma+\frac32>0.
$$
We obtain  two-sided estimates for the small ball probabilities of
$X$. As applications, we prove Chung's laws of the iterated logarithm (Chung's LILs) at
every fixed point $t>0$, at the origin, and at infinity.  The fixed-time result follows from the small ball estimates and the Lamperti transformation, whereas the results at the origin and infinity follow from Talagrand’s lower-class criteria \cite{talagrand1996lower}.    
These results show that $\gamma+\frac32$ determines the local roughness and the small ball exponent, $\alpha+\beta$ determines the scale of local fluctuations at fixed positive times, and $H$ governs the self-similar scaling at the origin and infinity.

\keywords{\textbf{Keywords:} Gaussian Volterra process; Small ball probabilities;  Lamperti transformation; Chung's LIL.}

\keywords{\textbf{MSC (2020):}   60G15; 60G17; 60G18;  60G22}

\section{Introduction}
 The Gaussian Volterra process (GVP)
$X=\{X(t),t\geq 0\}$, introduced by Mishura and Shklyar \cite{MS22a},
is a centered Gaussian process defined by
\begin{equation}\label{eq GVP}
	X(t)
	=
	\int_0^t r^\alpha
	\left(
		\int_r^t u^\beta (u-r)^\gamma\,du
	\right)dW_r,
	\qquad t\geq 0,
\end{equation}
where $W=\{W(t),t\geq 0\}$ is a standard Brownian motion and the
parameters satisfy
\begin{equation}\label{eq constant}
	\alpha>-\frac12,
	\qquad
	\gamma>-1,
	\qquad
	\alpha+\beta+\gamma>-\frac32.
\end{equation} Under \eqref{eq constant}, the process $X$ is well defined and
self-similar with Hurst index
\begin{equation}\label{eq H}
H:=\alpha+\beta+\gamma+\frac32>0;
\end{equation}
see Mishura and Shklyar
\cite[Theorem~1 and Proposition~1]{MS22a}.

 For $H\in(1/2,1)$, if we choose
$
\alpha=\frac12-H,\,
\beta=H-\frac12,\,
\gamma=H-\frac32,
$
then 
the process $X$ reduces to
$$
\int_0^t r^{\frac12-H}
\left(
\int_r^t u^{H-\frac12}(u-r)^{H-\frac32}\,du
\right)dW_r.
$$
Up to a multiplicative constant, this representation coincides with the
Molchan representation (also known as the Volterra-type representation) of
fractional Brownian motion (fBm) $B^H$ with Hurst index $H$.
See, for example, \cite[Section~1.1]{BMRS},
\cite[Theorem~1.8.3]{Mis08}, and
\cite[Theorem~5.2]{NVV99}.

Another important special case arises when $\beta=0$. If
$
\alpha\in\left(-1/2,0\right],
\gamma\in\left(-1-\alpha,-1/2-\alpha\right),
$
then $X$ becomes
\begin{align*}\label{eq GRL}
X(t)
=
\frac{1}{1+\gamma}
\int_0^t (t-s)^{\gamma+1}s^\alpha\,dW_s .
\end{align*}
This process is   the generalized Riemann--Liouville fBm studied by Ichiba, Pang, and Taqqu
\cite[(2.13)]{TGM2022}.
 
In \cite{MS22a}, Mishura and Shklyar established several fundamental
properties of the GVP, including asymptotic estimates for incremental
variances, the generalized quasi-helix property, and H\"older continuity.
In a subsequent work \cite{MS22b}, they further investigated path
differentiability and obtained an inverse Volterra representation for the
process. More recently, El Omari \cite{EO24} and  Mishura,   Ralchenko,  and Shklyar \cite{MRS23}
studied several structural
properties of the GVP, including its non-Markovian behavior,
semimartingale property, long-range dependence, and parametric smoothness.
However, precise sample path properties of the GVP, such as small ball
probabilities and laws of the iterated logarithm, remain largely unexplored.
 We therefore establish sharp small ball probability estimates, up to multiplicative constants in the exponent, and derive Chung’s LILs for the GVP. The main technical contributions underlying these results are described below.

 Theorems \ref{lem small0t} and \ref{lem smallt-rt+r} provide, respectively, global and local small ball estimates and reveal their distinct scaling behaviors.
\begin{theorem}\label{lem small0t}
Assume that
\begin{equation}\label{eq constant cond}
\begin{aligned}
	\alpha>-\frac12,\qquad
	\gamma\in\left(-1,-\frac12\right),\qquad
	H=\alpha+\beta+\gamma+\frac32>0.
\end{aligned}
\end{equation}
Then there exist constants $\kappa_1,\kappa_2\in(0,\infty)$ such that, for
every $t>0$ and $0<\varepsilon<t^H$,
\begin{equation}\label{Eq small0t}
\exp\left(
-\kappa_1
\left(\frac{t^H}{\varepsilon}\right)^{\frac{2}{2\gamma+3}}
\right)
\leq
\mathbb{P}
\left\{
\sup_{s\in[0,t]}|X(s)|\leq\varepsilon
\right\}
\leq
\exp\left(
-\kappa_2
\left(\frac{t^H}{\varepsilon}\right)^{\frac{2}{2\gamma+3}}
\right).
\end{equation}
\end{theorem}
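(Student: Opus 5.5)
By self-similarity of $X$ with index $H$ (Mishura and Shklyar), $\{X(ts)\}_{s\ge0}\overset{d}{=}\{t^HX(s)\}_{s\ge0}$. Hence
$$\mathbb{P}\Big\{\sup_{[0,t]}|X|\le\varepsilon\Big\}=\mathbb{P}\Big\{\sup_{[0,1]}|X|\le \varepsilon t^{-H}\Big\},$$
and it suffices to prove \eqref{Eq small0t} for $t=1$ and $0<\varepsilon<1$. In that range, the claim is that $-\log\mathbb{P}\{\sup_{[0,1]}|X|\le\varepsilon\}\asymp\varepsilon^{-2/(2\gamma+3)}$. For $\varepsilon$ bounded away from $0$ both sides are comparable constants, since the probability is positive by the lower bound argument. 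So only small $\varepsilon$ matters, and the exponent to aim for is $1/\theta$ with $\theta=\gamma+\tfrac32\in(\tfrac12,1)$.

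\textbf{Upper bound.} Use the standard lower bound on incremental variance together with conditional variance, the Li--Shao/Talagrand-type argument. First I would show a strong local nondeterminism (SLND) property on $[\tfrac12,1]$: for $s\in[\tfrac12,1]$ and $0<r\le s-\tfrac12$,
$$\mathrm{Var}\big(X(s)\mid X(u),\ u\in[\tfrac12,s-r]\big)\ge c\, r^{2\theta}.$$
The source of this bound is the innovation of $W$ on $(s-r,s]$, because conditioning on the past of $X$ is no stronger than conditioning on $\mathcal F^W_{s-r}$. Thus
$$\mathrm{Var}\big(X(s)\mid\mathcal F^W_{s-r}\big)=\int_{s-r}^s r'^{2\alpha}\Big(\int_{r'}^s u^\beta(u-r')^\gamma du\Big)^2dr'.$$
Here $r'\approx 1$ and the inner integral is $\asymp (s-r')^{\gamma+1}$, so the variance is $\asymp r^{2\gamma+3}=r^{2\theta}$. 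Next split $[\tfrac12,1]$ into $n\asymp\varepsilon^{-1/\theta}$ points $s_k$ spaced by $\delta=\varepsilon^{1/\theta}$. Conditioning successively, each conditional Gaussian has variance $\ge c\varepsilon^2$ and so lies in $[-\varepsilon,\varepsilon]$ with probability at most $p<1$, by Anderson's inequality since the conditional mean only shifts the interval. This gives
$$\mathbb{P}\{\sup|X|\le\varepsilon\}\le p^{n}=\exp\big(-c\,\varepsilon^{-1/\theta}\big).$$

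\textbf{Lower bound.} Here I would bound the metric entropy of $[0,1]$ under the canonical metric $d(s,s')=\|X(s)-X(s')\|_{L^2}$ and apply Talagrand's small ball lower bound: if $N(\epsilon)\le\Psi(\epsilon)$ with $\Psi(\epsilon)\asymp\Psi(\epsilon/2)$, then $\mathbb{P}\{\sup|X|\le\varepsilon\}\ge\exp(-C\Psi(\varepsilon))$. This requires $d(s,s')\le C|s-s'|^{\theta}$ uniformly on $[0,1]$, which gives $N(\epsilon)\le C\epsilon^{-1/\theta}$. Mishura--Shklyar's incremental variance estimates give the right order away from $0$, and near $0$ the claim follows from self-similarity: $\mathbb{E}(X(s)-X(s'))^2=s^{2H}g(s'/s)$. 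The point to check is that $g(1-h)\lesssim h^{2\theta}$ as $h\to0$ and $g$ is bounded. Boundedness of $g$ gives $d(s,s')\le Cs^{H}$ for $s'<s$, and I must ensure this is also $\lesssim|s-s'|^\theta$ when $s'\le s/2$, which holds if $H\ge\theta$. If $H<\theta$, i.e.\ $\alpha+\beta<0$, the Hölder bound fails near $0$. In that case I would cover $[0,1]$ dyadically with blocks $[2^{-j-1},2^{-j}]$. On the $j$th block, $d\le C2^{-j(H-\theta)}|s-s'|^{\theta}$, so it needs $\lesssim 2^{-j}\,2^{-j(H-\theta)/\theta}\epsilon^{-1/\theta}=2^{-jH/\theta}\epsilon^{-1/\theta}$ balls. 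Blocks with $2^{-jH}<\epsilon$ are covered by a single ball around $0$ since $\mathrm{Var}X(s)=cs^{2H}$. Because $H>0$, the sum over $j$ is a geometric series, and again $N(\epsilon)\lesssim\epsilon^{-1/\theta}$.

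\textbf{Main obstacle.} The delicate step is the incremental-variance bound $g(1-h)\lesssim h^{2\theta}$ near the diagonal, and symmetrically the SLND lower bound. The difference kernel $K(s,r)-K(s',r)$ for $r<s'$ involves
$$r^\alpha\int_{s'}^{s}u^\beta(u-r)^\gamma\,du,$$
whose $L^2(dr)$ norm must be shown to be $O(h^{2\theta})$. The condition $\gamma<-\tfrac12$ is exactly what makes the singularity at $r\uparrow s'$ integrable in the right way: $\int_0^{s'}\big((s'-r+h)^{\gamma+1}-(s'-r)^{\gamma+1}\big)^2dr\asymp h^{2\gamma+3}$, where for $\gamma+1\in(0,\tfrac12)$ the usual fBm-type scaling holds. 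The factor $r^\alpha$ with $\alpha>-\tfrac12$ keeps the region near $r=0$ harmless. I would either cite the Mishura--Shklyar incremental variance asymptotics or verify this directly by splitting the $r$-integral at $s'-h$.
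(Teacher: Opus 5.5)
Your proof is correct and uses the same two mechanisms as the paper: Talagrand's entropy lower bound with $N(\epsilon)\lesssim\epsilon^{-2/(2\gamma+3)}$, and conditional-variance chaining with Anderson's inequality for the upper bound. The implementation differs, and your version is leaner.

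\textbf{The paper's route.} It works on all of $[0,t]$ with the single nonuniform grid $t_n=\varepsilon^{1/H}n^{(2\gamma+3)/(2H)}$. This grid equalizes the $d_X$-increments across scales, so every sign of $\alpha+\beta$ is handled at once. The price is two uniform ingredients: the bounds of Lemma~\ref{lem mom}, proved by a direct case analysis of the kernel for all $t>s>0$; and the global one-sided SLND of Proposition~\ref{lem SLND}, with its factor $t^{2\alpha+2\beta}$. For the SLND the paper invokes the equality $\sigma\{X(r):r\le s\}=\sigma\{W(r):r\le s\}$ of Mishura--Shklyar.

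\textbf{Your route.} You scale to $t=1$ and run the upper bound only on $[\tfrac12,1]$ with a uniform grid. There the kernel is comparable to $(s-r')^{\gamma+1}$, and only the trivial inclusion $\sigma\{X(u):u\le s\}\subseteq\mathcal F^W_s$ is needed. For the entropy, you obtain $\mathbb{E}(X(s)-X(s'))^2\lesssim s^{2(\alpha+\beta)}(s-s')^{2\gamma+3}$ for $s'\ge s/2$ from self-similarity plus a single increment estimate at time $1$. You then cover by a uniform grid when $\alpha+\beta\ge0$, or by dyadic blocks and one ball at the origin when $\alpha+\beta<0$. Two details are left implicit but are harmless:
\begin{itemize}
\item the rounding in the dyadic count adds only $O(\log\epsilon^{-1})$ balls;
\item for $\varepsilon$ near $1$, the upper bound needs only $\mathbb{P}\{|X(1)|\le1\}<1$.
\end{itemize}

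\textbf{What each buys.} Your route is shorter and more elementary for this theorem alone. It also shows that a local increment estimate at one time, combined with scaling, already gives the required uniformity. The paper's heavier uniform estimates pay for themselves later: Lemma~\ref{lem mom} and the global SLND are reused in Theorem~\ref{lem smallt-rt+r}, in the maximal inequality of Proposition~\ref{onemax1}, and in the Chung LIL arguments.
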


 \begin{theorem}\label{lem smallt-rt+r} Under Condition \eqref{eq constant cond}, there exist constants
\(\kappa_3,\kappa_4\in(0,\infty)\) such that, 
for all \(t>0\), \(r\in(0,t/2)\), and
\(0<\varepsilon<t^{\alpha+\beta}r^{\gamma+\frac32}\),
\begin{equation}\label{Eq smallt-rt+r}
\begin{split}
\exp\left(
-\kappa_3 r t^{\frac{2\alpha+2\beta}{2\gamma+3}}
\varepsilon^{-\frac{2}{2\gamma+3}}
\right)
&\leq
\mathbb{P}
\left\{
\sup_{|s|\le r}|X(t+s)-X(t)|\leq\varepsilon
\right\}\\
&\leq
\exp\left(
-\kappa_4 r t^{\frac{2\alpha+2\beta}{2\gamma+3}}
\varepsilon^{-\frac{2}{2\gamma+3}}
\right).
\end{split}
\end{equation}
\end{theorem}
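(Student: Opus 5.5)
We want the local small ball estimate for $X(t+s)-X(t)$, $|s|\le r$, $r<t/2$. The idea is to write $X$ near time $t$ as a smooth, well-behaved perturbation of a Riemann--Liouville type process with index $\gamma+\tfrac32$, carrying an amplitude factor $t^{\alpha+\beta}$. By self-similarity (Mishura--Shklyar) we have $\{X(t+s)-X(t)\}_s\overset{d}{=}\{t^H(X(1+s/t)-X(1))\}_s$. So it suffices to treat $t=1$, with $r\in(0,1/2)$ and $\varepsilon$ replaced by $\varepsilon t^{-H}$. The claimed exponent then becomes $r\,t\cdot t^{(2\alpha+2\beta)/(2\gamma+3)}\varepsilon^{-2/(2\gamma+3)}$ after rescaling $r\mapsto r/t$. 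Indeed $(r/t)(\varepsilon t^{-H})^{-2/(2\gamma+3)} = r\,t^{(2H-2\gamma-3)/(2\gamma+3)}\varepsilon^{-2/(2\gamma+3)}$, and $2H-2\gamma-3=2\alpha+2\beta$, which checks out. Hence the whole statement reduces to proving, for $t=1$, $r\in(0,1/2)$ and $\varepsilon<r^{\gamma+3/2}$,
\[
-\log\mathbb{P}\Big\{\sup_{|s|\le r}|X(1+s)-X(1)|\le\varepsilon\Big\}\asymp r\,\varepsilon^{-2/(2\gamma+3)}.
\]

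\textbf{Upper bound.} I would use a conditioning/independent-increments argument on blocks. Split $[1-r,1+r]$ into $n\approx r\varepsilon^{-2/(2\gamma+3)}$ intervals of length $h=\varepsilon^{2/(2\gamma+3)}$; the hypothesis $\varepsilon<r^{\gamma+3/2}$ guarantees $h\lesssim r$, so $n\ge1$. The key estimate is a strong local nondeterminism bound:
\[
\mathrm{Var}\big(X(u)\mid X(v):|v-u|\ge h,\ v\in[1/2,3/2]\big)\ge c\,h^{2\gamma+3}.
\]
To get it, take the $L^2(dW)$ kernel $K(u,\cdot)=r^\alpha\int_r^u \cdots$ and restrict it to $(u-h,u)$. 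There it equals $\tfrac{1}{\gamma+1}\,\theta^{\alpha}u^{\beta}(u-\theta)^{\gamma+1}(1+O(h))$, since $u^\beta$ is nearly constant. Its contribution to the conditional variance is at least that of $W$ on $(u-h,u)$, after projecting out the smooth part; this is the standard Riemann--Liouville argument and gives the order $h^{2\gamma+3}$. With SLND in hand, a chaining/conditioning argument (as in Monrad--Rootzén or Talagrand) applied along the grid points gives $\mathbb{P}\le \prod (1-c)^{}\le e^{-cn}$.

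\textbf{Lower bound.} I would decompose, for $s\in[-r,r]$,
\[
X(1+s)-X(1)=\int_{1-2r}^{1+s}(\cdots)dW+\big(\text{term driven by }W|_{[0,1-2r]}\big)=Y(s)+Z(s).
\]
The two pieces are independent. $Z$ is a $C^\infty$ function of $s$ whose derivative has variance $\lesssim r^{2\gamma+1}$, because $\gamma+1>0$ and the distance from the driving noise is at least $r$. The main part $Y$ is comparable to a Riemann--Liouville process $\int (1+s-\theta)^{\gamma+1}\,dW_\theta$ times a smooth bounded multiplier. For it I would use the known small ball rate $\exp(-cr\varepsilon^{-2/(2\gamma+3)})$, e.g.\ Li--Linde, or Theorem \ref{lem small0t} transferred by a Girsanov/Anderson comparison. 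Independence then lets us apply Anderson's inequality, or Khatri--Šidák/the Gaussian correlation inequality, to $\{\|Y\|\le\varepsilon/2\}\cap\{\|Z\|\le\varepsilon/2\}$. The event $\{\|Z\|\le\varepsilon/2\}$ costs at most $\exp(-c\log(r^{\gamma+3/2}/\varepsilon))$ via the smooth finite-dimensional approximation (entropy bound for $Z$). This is dominated by $r\varepsilon^{-2/(2\gamma+3)}$ when $\varepsilon<r^{\gamma+3/2}$, since then $r\varepsilon^{-2/(2\gamma+3)}\ge r^{0}\cdot1$ grows faster than the log.

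\textbf{Main obstacle.} The delicate step is the SLND bound together with controlling the $O(h)$ and $\theta^\alpha$ distortions uniformly in $r<1/2$. Near time $1$, $\theta^\alpha$ is bounded above and below, but the cross terms from $\int_\theta^u u'^\beta(u'-\theta)^\gamma du'$ must be handled. I would first try a Taylor expansion, writing it as $u^\beta(u-\theta)^{\gamma+1}/(\gamma+1)$ plus a remainder of order $(u-\theta)^{\gamma+2}$, and treating the remainder as a smoother Gaussian perturbation.
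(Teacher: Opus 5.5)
\textbf{Upper bound.} Your reduction to $t=1$ via $H$-self-similarity is correct; the bookkeeping $2H-2\gamma-3=2\alpha+2\beta$ checks out. Your upper bound is essentially the paper's argument: a grid of mesh $h\asymp\varepsilon^{2/(2\gamma+3)}$, sequential conditioning, and Anderson's inequality. The paper gets this from the one-sided SLND of Proposition~\ref{lem SLND} together with Monrad--Rootz\'en, and carries it out by hand in the proof of Theorem~\ref{lem small0t}. Two corrections are needed.

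First, the two-sided estimate you state (conditioning on $X(v)$ for $|v-u|\ge h$ on both sides) is neither needed nor proved by your kernel-restriction argument. For $v\ge u+h$, $X(v)$ depends on $W$ on $(u-h,u)$, so restricting $K(u,\cdot)$ to $(u-h,u)$ says nothing about conditioning on it. Instead, take grid points $u_k=1+kh$, $0\le k\le n$, in $[1,1+r]$ only. This costs a factor $2$ in $n$ and puts $X(1)=X(u_0)$ in the past. Then $\sigma\{X(u_j):0\le j<k\}\subseteq\sigma\{W(\theta):\theta\le u_{k-1}\}$, and the conditional variance of $X(u_k)$ is at least $\int_{u_{k-1}}^{u_k}K(u_k,\theta)^2\,d\theta$.

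Second, the ``main obstacle'' you describe does not arise. On $[1/2,3/2]$ the factors $\theta^\alpha$ and $y^\beta$ are bounded below, so $K(u,\theta)\ge c\,(u-\theta)^{\gamma+1}$ directly, and the integral is at least $c\,h^{2\gamma+3}=c\,\varepsilon^2$. No Taylor expansion or remainder is involved; this is exactly Step~1 of Lemma~\ref{lem mom}.

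\textbf{Lower bound.} Here you take a genuinely different route, and as written it is incomplete at its central step. The paper uses only the \emph{upper} estimate of Lemma~\ref{lem mom}. On $[t-r,t+r]\subset[t/2,3t/2]$ it gives $d_X(s_1,s_2)\le C\,t^{\alpha+\beta}|s_1-s_2|^{\gamma+3/2}$, hence $N([t-r,t+r],d_X,\varepsilon)\le C\,r\,t^{(2\alpha+2\beta)/(2\gamma+3)}\varepsilon^{-2/(2\gamma+3)}$ for $\varepsilon<t^{\alpha+\beta}r^{\gamma+3/2}$. Talagrand's lower bound (Lemma~\ref{Lem:Ta93}), applied to $\sup_{s_1,s_2\in[t-r,t+r]}|X(s_1)-X(s_2)|\ge\sup_{|s|\le r}|X(t+s)-X(t)|$, then finishes in one line. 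In your route:
(i) $Y$ and $Z$ are independent, so $\mathbb P\{\|Y+Z\|\le\varepsilon\}\ge\mathbb P\{\|Y\|\le\varepsilon/2\}\,\mathbb P\{\|Z\|\le\varepsilon/2\}$ needs no correlation inequality. Anderson's inequality only gives upper bounds here.
(ii) A correlation inequality is genuinely needed \emph{inside} $Y$. Writing $K(u,\theta)=\frac{u^{\alpha+\beta}}{\gamma+1}(u-\theta)^{\gamma+1}+E(u,\theta)$ with $E=O((u-\theta)^{\gamma+2})$ produces a Riemann--Liouville term and a remainder driven by the same $W$. So ``comparable'' must be made precise via the Gaussian correlation inequality (or Li's weak correlation inequality), together with a separate small ball lower bound for the remainder process. ``Transferring Theorem~\ref{lem small0t} by Girsanov/Anderson'' is not a valid mechanism: that theorem concerns $\sup_{[0,t]}|X|$ rather than increments near $t$, and a multiplicative change of kernel is not a Girsanov shift.
(iii) For $r$ close to $1/2$ the splitting point $1-2r$ approaches $0$, so $\theta^\alpha$ is not uniformly controlled on $[1-2r,1+r]$. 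Split at $1-\frac32 r>\frac14$ instead.
(iv) Your rate $\exp(-c\log(r^{\gamma+3/2}/\varepsilon))$ for $Z$ is unjustified; for a smooth Gaussian process of infinite rank one expects $\log^2$. It is also unnecessary: $d_Z(s,s')\le C\,r^{\gamma+1/2}|s-s'|$ and Lemma~\ref{Lem:Ta93} give $-\log\mathbb P\{\|Z\|\le\varepsilon/2\}\lesssim y:=r^{\gamma+3/2}/\varepsilon$. Since $y\ge1$, we have $y\le y^{2/(2\gamma+3)}=r\,\varepsilon^{-2/(2\gamma+3)}$.

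The small ball bounds for $Z$ and for the remainder are themselves most naturally obtained from the same entropy lemma, at which point applying it directly to $X$ is strictly simpler. Your Riemann--Liouville comparison would pay off only if you wanted sharp small ball constants, which would identify $\kappa_5$ in Theorem~\ref{thm CLIL}; the $\varepsilon/2$ splits lose those anyway.
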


 Small ball probability estimates are useful not only in the proof of
Chung’s LIL, but also in
the study of various fine sample path properties of Gaussian processes. We
refer the reader to Li and Shao \cite{li_shao_2001} for further applications.

\begin{remark}
When $\gamma>-1/2$, Mishura and Shklyar~\cite[Corollary~2]{MS22b}
showed that $X$ admits continuously differentiable sample paths on $(0,\infty)$. Related sample-path properties of the derivative process $X'$ may be studied by arguments similar to those in Wang and Xiao  \cite{WX2022a}.

The critical case $\gamma=-1/2$ requires different techniques and is excluded here;
see  Wang and Xiao \cite[Remarks~1.1 and~5.1]{WX2022a} for a related discussion. Henceforth, we assume $\gamma\in(-1,-1/2)$.
\end{remark}

\begin{theorem}\label{thm CLIL}
Under Condition \eqref{eq constant cond}, there exists a constant
$\kappa_5\in(0,\infty)$ such that, for every $t>0$,
\begin{align}\label{eq CLIL1}
\liminf_{r\to0^+}
\sup_{|s|\le r}
\frac{ (\log\log r^{-1})^{\gamma+\frac32} }
{r^{\gamma+\frac32} }|X(t+s)-X(t)|
=
\kappa_5 t^{\alpha+\beta},
\qquad \text{a.s.}
\end{align}
\end{theorem}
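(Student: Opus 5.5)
The plan has three parts: a standard Borel--Cantelli argument, built on Theorem \ref{lem smallt-rt+r}, to show the $\liminf$ is bounded above and below by positive finite constants; a zero--one law making it an almost sure constant; and a scaling argument identifying that constant as $\kappa_5 t^{\alpha+\beta}$. Throughout, write $\theta:=\gamma+\frac32\in(\frac12,1)$, and let $M_t(r):=\sup_{|s|\le r}|X(t+s)-X(t)|$.

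\textbf{Lower bound.} Fix $t>0$ and take a geometric sequence $r_n=\rho^{n}$ with $\rho\in(0,1)$. Set $\varepsilon_n=c\, t^{\alpha+\beta} r_n^{\theta}(\log\log r_n^{-1})^{-\theta}$. Since $\theta=(2\gamma+3)/2$, we have $\varepsilon_n^{-2/(2\gamma+3)}=\varepsilon_n^{-1/\theta}$. The upper bound in \eqref{Eq smallt-rt+r} then gives
\[
\mathbb{P}\{M_t(r_n)\le\varepsilon_n\}\le \exp\bigl(-\kappa_4 c^{-1/\theta}\log\log r_n^{-1}\bigr)=(n\log\rho^{-1})^{-\kappa_4c^{-1/\theta}}.
\]
This is summable once $c$ is small. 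By Borel--Cantelli, and by monotonicity of $M_t$ between $r_{n+1}$ and $r_n$ (which costs only a factor $\rho^{\theta}$), the $\liminf$ is at least some $c' t^{\alpha+\beta}>0$ almost surely.

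\textbf{Upper bound.} The matching lower estimate in \eqref{Eq smallt-rt+r} shows the probabilities along a sparser sequence, e.g.\ $r_n=n^{-n}$, are not summable when $c$ is large. The increments over different scales are dependent, however, so independence must be manufactured. I would use the Lamperti-type / Volterra structure: split $X(t+s)-X(t)$ into the part driven by $W$ on $[t-r_{n-1},t+r_{n-1}]$ and the remainder driven by $W$ outside that window. The first parts are independent across $n$ once the windows are nested and disjointified, for instance by taking $W$-increments on the annuli $\{r_{n}<|u-t|\le r_{n-1}\}$ together with the inner core. Anderson's inequality applied to the independent pieces then lets the second Borel--Cantelli lemma give $M_t(r_n)\le \varepsilon_n$ infinitely often for the local piece. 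The remainder terms must be shown to be $o(r_n^\theta(\log\log r_n^{-1})^{-\theta})$ almost surely. The key input here is smoothness: the kernel restricted to $W$-times $u< t-r_{n-1}$, viewed as a function of $s\in[-r_n,r_n]$, is differentiable with derivative controlled by $(r_{n-1})^{\gamma}$ (using $\gamma<-\frac12$). Its sup over $|s|\le r_n$ is therefore of order $r_n r_{n-1}^{\theta-1}$ times a log factor. For $r_n=n^{-n}$ this is negligible compared with $r_n^{\theta}$, because $(r_n/r_{n-1})^{1-\theta}\to0$ faster than any power of $\log n$.

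\textbf{Constant.} The two bounds give $0<\liminf<\infty$ almost surely. The $\liminf$ is measurable with respect to the germ $\sigma$-field of $W$ near $t$, via the decomposition above, since the smooth far-field part is negligible. Blumenthal's $0$--$1$ law then makes it an almost sure constant $L(t)$. To show $L(t)=\kappa_5 t^{\alpha+\beta}$, use self-similarity: $\{X(at+as)-X(at)\}_s\overset{d}{=}a^{H}\{X(t+s)-X(t)\}_s$. Substituting $r\mapsto ar$ gives $L(at)=a^{H}a^{-\theta}L(t)=a^{\alpha+\beta}L(t)$, since the factor $(\log\log (ar)^{-1})/(\log\log r^{-1})\to1$. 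Hence $L(t)=L(1)t^{\alpha+\beta}$, and we set $\kappa_5=L(1)$.

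\textbf{Main obstacle.} I expect the hardest step to be the upper bound's decoupling estimate: controlling the far-field remainder uniformly in $s$ while preserving a small-ball lower bound for the localized piece of the same form as the left side of \eqref{Eq smallt-rt+r}. My first attempt would be to rerun the proof of Theorem \ref{lem smallt-rt+r} for the truncated process, since its lower bound presumably relies only on local kernel behaviour near $t$.
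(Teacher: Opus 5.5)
Your lower bound coincides with the paper's. Your route to an almost sure constant is a valid and lighter substitute for the paper's spectral zero--one law for the Lamperti process: discard the far field, which is $C^1$ in $s$ for a fixed window; apply Blumenthal's law to the planar Brownian motion $(W(t+\cdot)-W(t),\,W(t)-W(t-\cdot))$; then get $L(at)=a^{\alpha+\beta}L(t)$ from self-similarity.

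The gap is in the upper bound. Your localized pieces are not independent across $n$, and what you would have to discard to make them independent is not negligible.
\begin{itemize}
\item Pieces driven by $W$ on the nested windows $[t-r_{n-1},t+r_{n-1}]$ all share the core. ``Annulus $\{r_n<|u-t|\le r_{n-1}\}$ together with the inner core'' is again the whole window.
\item If you keep only the annulus, the core $\{|u-t|\le r_n\}$ moves into the remainder. But for $s\in(0,r_n]$ the increment contains $\int_t^{t+s}u^{\alpha}\bigl(\int_u^{t+s}y^{\beta}(y-u)^{\gamma}dy\bigr)dW_u$. Its variance is the term $I_2\ge c_{2,1}t^{2\alpha+2\beta}s^{2\gamma+3}$ from Lemma~\ref{lem mom}, a fixed fraction of the whole fluctuation at scale $r_n$.
\end{itemize}
Your only remainder estimate concerns the far field $u<t-r_{n-1}$. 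Nothing controls the core, so the second Borel--Cantelli step does not go through as stated.

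\textbf{Repair.} Drive the scale-$n$ piece by $W$ on $\{\epsilon_n<|u-t|\le\delta_n\}$ with $\epsilon_n\ll r_n\ll\delta_n$ and $\delta_{n+1}\le\epsilon_n$. For example, take $\epsilon_n=\delta_{n+1}=\sqrt{r_nr_{n+1}}$, which works because $r_{n+1}/r_n\sim(en)^{-1}$ for $r_n=n^{-n}$. Then add a core estimate:
\begin{itemize}
\item For $|u-t|\le\epsilon_n$ and $|s|\le r_n$, the integrand of $X(t+s)-X(t)$ is $O(r_n^{\gamma+1})$.
\item So the core part has variance $O(\epsilon_n r_n^{2\gamma+2})=O(r_n^{2\theta}n^{-1/2})$, with your $\theta=\gamma+\frac32$.
\item Dudley's bound together with Borell's inequality makes its supremum $o(r_n^{\theta}(\log n)^{-\theta})$, with summable exceptional probabilities.
\end{itemize}
Your far-field estimate, with $\delta_n$ in place of $r_{n-1}$, then gives the ratio $(r_n/\delta_n)^{1-\theta}\approx n^{-(1-\theta)/2}$, which is still enough.

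This two-sided remainder is exactly what the paper controls in the frequency domain. The low band $|\omega|\le d_{n-1}$, bounded via $\int_{|\omega|<u}\omega^2 f$, is your smooth far field. The high band $|\omega|>d_n$, bounded via the spectral tail in Lemma~\ref{lem spectral}, is your missing core. In the paper, independence across $n$ comes for free from the disjoint spectral bands.

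Finally, each piece is independent of its complement, so Anderson's inequality transfers the lower bound of Theorem~\ref{lem smallt-rt+r} to it directly. You do not need to rerun that proof for the truncated process.
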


We prove Theorem~\ref{thm CLIL} by combining the small ball probability estimates with the Lamperti transformation, following the approaches of Tudor and Xiao \cite[Theorem~3.1]{TX2007} and Wang and Xiao \cite[Proposition~6.1]{WX2022a}. This method is well suited to the local problem at a fixed positive time, but it does not apply directly at the origin or at infinity. These two regimes are therefore treated separately in Theorem~\ref{thm CLTL0infinity}, whose proof is based on Talagrand-type lower-class criteria.

\begin{theorem}\label{thm CLTL0infinity}
Under Condition \eqref{eq constant cond}, there exist  constants
$\kappa_6,\kappa_7\in(0,\infty)$ such that
\begin{equation*}\label{thm CLTL0}
\liminf_{t\to0^+}
\sup_{0\le s\le t}
\frac{(\log\log t^{-1})^{\gamma+\frac32}}
{t^H}|X(s)|
=
\kappa_6,
\qquad\text{a.s.,}
\end{equation*}
and
\begin{equation*}\label{thm CLTLinfinity}
\liminf_{t\to\infty}
\sup_{0\le s\le t}
\frac{(\log\log t)^{\gamma+\frac32}}
{t^H }|X(s)|
=
\kappa_7,
\qquad\text{a.s.}
\end{equation*}
\end{theorem}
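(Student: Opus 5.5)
The plan is to apply Talagrand's lower-class criteria \cite{talagrand1996lower} for self-similar Gaussian processes. At the origin the relevant functional is $M(t):=\sup_{0\le s\le t}|X(s)|$. By $H$-self-similarity of $X$ we have $M(t)\overset{d}{=}t^H M(1)$. Theorem~\ref{lem small0t} gives
\[
-\log \mathbb{P}\{M(1)\le \varepsilon\}\asymp \varepsilon^{-2/(2\gamma+3)} \quad\text{as } \varepsilon\to0.
\]
So the small ball exponent is $\theta=2/(2\gamma+3)$, and the normalization $t^H(\log\log t^{-1})^{-1/\theta}=t^H(\log\log t^{-1})^{-(\gamma+3/2)}$ is the natural one. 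First I will verify the hypotheses of Talagrand's framework. Besides self-similarity and the two-sided small ball bound, this means a decorrelation (weak dependence) condition across geometric scales. For that I will use the Lamperti transform $Y(\tau)=e^{-H\tau}X(e^\tau)$, which is a stationary Gaussian process.

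The lower bound $\liminf\ge c>0$ is routine Borel--Cantelli along $t_n=e^{-n}$ (respectively $e^{n}$ at infinity). We have
\[
\mathbb{P}\{M(t_n)\le c\,t_n^H(\log n)^{-(\gamma+3/2)}\}\le \exp(-\kappa_2 c^{-\theta}\log n),
\]
which is summable once $c$ is small. Interpolation between $t_{n+1}$ and $t_n$ costs only a constant factor $e^{H}$, since $M$ is monotone.

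The upper bound $\liminf\le C<\infty$ is the main obstacle, because it needs a second Borel--Cantelli argument with asymptotic independence. I will take sparse scales $t_n=n^{-n}$ and decompose
\[
X(s)=\int_0^{t_{n+1}}K(s,r)\,dW_r+\int_{t_{n+1}}^{s}K(s,r)\,dW_r=:X_n^{(1)}(s)+X_n^{(2)}(s),\qquad s\le t_n,
\]
with $K(s,r)=r^\alpha\int_r^s u^\beta(u-r)^\gamma du$. The pieces $X_n^{(2)}$ are driven by disjoint Brownian increments over the intervals $(t_{n+1},t_n]$, so they are independent in $n$.

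I will then show two things. First, $\mathbb{P}\{\sup_{s\le t_n}|X_n^{(2)}(s)|\le C t_n^H(\log n)^{-(\gamma+3/2)}\}\ge n^{-1/2}$ for large $C$. This comes from Anderson's inequality, since $X_n^{(2)}$ is a component of $X$ and the lower bound of Theorem~\ref{lem small0t} transfers. Second, $\sup_{s\le t_n}|X_n^{(1)}(s)|=o\bigl(t_n^H(\log n)^{-(\gamma+3/2)}\bigr)$ almost surely. For this I will bound the variance $\mathbb{E}[X_n^{(1)}(s)^2]=\int_0^{t_{n+1}}K(s,r)^2dr$ and its increments, using $K(s,r)\lesssim r^\alpha s^{\beta+\gamma+1}$ for $r\ll s$ (valid when $\beta+\gamma+1>0$; otherwise the kernel estimate must be split near $u=r$). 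This should give $\sigma_n\lesssim t_n^H (t_{n+1}/t_n)^{\alpha+1/2}$, which decays polynomially in $n$ relative to $t_n^H$ since $t_{n+1}/t_n\approx (en)^{-1}$. Dudley/Borell--TIS and Borel--Cantelli then finish this step. Checking the kernel bound uniformly over all admissible $(\alpha,\beta,\gamma)$ is where I expect the technical work to lie.

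The second Borel--Cantelli lemma then gives $\liminf\le C$. Finally, since the $\liminf$ is measurable with respect to the germ $\sigma$-field at $0$ (tail at $\infty$), it is a.s. a constant. This follows from Blumenthal's $0$-$1$ law for $W$, or equivalently from ergodicity of the stationary Lamperti process $Y$. Hence the limit equals some $\kappa_6\in[c,C]$. The case at infinity is symmetric: take $t_n=n^{n}$, make the independent pieces the increments of $W$ on $(t_{n-1},t_n]$, and treat the early part $\int_0^{t_{n-1}}$ as the small perturbation. This yields $\kappa_7$. Alternatively, time inversion via the Lamperti process ($\tau\to\pm\infty$) reduces both cases to the same stationary argument.
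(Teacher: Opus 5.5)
Your proposal is correct, but what you actually carry out (despite the opening sentence about Talagrand's criteria) is a genuinely different route from the paper's.

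\textbf{The paper's route.} The paper never runs a direct Borel--Cantelli argument at $0$ or $\infty$. It first proves integral tests for $LLC_0(M)$ and $LLC_\infty(M)$ (Theorems~\ref{thoLLC0} and~\ref{thoLLCinfty}). These rest on two maximal inequalities: Proposition~\ref{onemax1}, via one-sided SLND on a recursive grid, and Proposition~\ref{twomax2}, via a split at $v=\sqrt{ut}$ and Dudley's bound. They also use the regularity of $\psi=-\log\varphi$ coming from Borell's convexity. The LIL is then read off by testing $\xi(t)=ct^H(\log\log t^{-1})^{-\lambda}$, with $\lambda=\gamma+\frac32$. Constancy of the liminf comes from Maruyama's theorem applied to the Lamperti process, together with ergodicity of the scalings.

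\textbf{Your route.} You get the lower bound from the first Borel--Cantelli lemma along $e^{\mp n}$. For the upper bound, you let the Volterra representation supply exact independence: the pieces driven by $W$ on the disjoint blocks $(t_{n+1},t_n]$ with $t_n=n^{-n}$ (resp.\ $(t_{n-1},t_n]$ with $t_n=n^{n}$). Anderson's inequality then transfers the lower bound of Theorem~\ref{lem small0t} to these pieces.

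The technical step you flag for the remainder is exactly Lemma~\ref{lem:variance_X1} with $v=t_{n+1}$ and $u=t_n$, based on \eqref{eq:J-estimate}. It gives a relative size $O\bigl(n^{-\min\{H,\alpha+1/2\}}\log n\bigr)$ in all three cases. Together with $d_{X_n^{(1)}}\le d_X$, Lemma~\ref{lem:covering}, and Dudley/Borell--TIS, this makes the remainder $o\bigl(t_n^H(\log n)^{-\lambda}\bigr)$ almost surely. With $C=\kappa_1^{\lambda}$ and a finer geometric grid in the lower bound, you even recover the paper's bounds $\kappa_2^{\lambda}\le\kappa_6,\kappa_7\le\kappa_1^{\lambda}$.

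\textbf{What each buys.} Your argument is self-contained and elementary. The paper's yields the much finer integral characterization of the lower classes, but it delegates Talagrand's second-moment machinery to the literature.

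\textbf{One point needs more care: the zero-one law at infinity.} At the origin, Blumenthal's law works because $\sigma\{X(r):r\le\epsilon\}\subseteq\sigma\{W(r):r\le\epsilon\}$. At infinity, however, the liminf is measurable only with respect to the tail field of $X$, not that of $W$, and $X$ has long memory. So ``tail at $\infty$'' is not automatic. You can close this in either of two ways:
\begin{itemize}
\item use the ergodicity argument, which requires $r(t)\to0$ from Lemma~\ref{key1}; or
\item note that the covariance of the Lamperti process is even, so $s^{2H}X(1/s)$ has the same law as $X$. This identifies the tail field of $X$ with the germ field at $0$ of a copy of $X$, which is trivial.
\end{itemize}
For the same reason, your closing remark that time inversion reduces the case at $\infty$ to the case at $0$ is not literally correct, since inversion does not map $M$ to itself. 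Your direct decomposition at $\infty$ is what actually proves that case.
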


Theorems 1.3 and 1.4 establish Chung's LILs for $X$ in different regimes. These results show that the parameters enter the asymptotic behavior through different combinations.  
In particular, $\gamma+\frac32$ determines the local roughness, $\alpha+\beta$ determines the scale of local fluctuations at fixed positive times, and $H$ governs the self-similar scaling at the origin and infinity.

The rest of the paper is organized as follows.  
In Section 2, we establish moment estimates for the increments of $X$ and prove the one-sided strong local nondeterminism (SLND) property of $X$.  In Section~3, we  prove Theorems~\ref{lem small0t} and
\ref{lem smallt-rt+r}.  
In Section 4, we prove Theorem 1.3. In Section 5, we establish the lower-class criteria used to prove Theorem 1.4.

\section{Moment estimates for increments and one-sided SLND}
 In this section, we establish two-sided moment estimates for the increments
of $X$ and prove the  one-sided SLND
property.  

\subsection{Moment estimates}

\begin{lemma}\label{lem mom}
Assume that Condition \eqref{eq constant cond} holds. Then there exist
positive constants $c_{2,1}$ and $c_{2,2}$ such that, for all $t>s>0$,
\begin{equation}\label{eq moment}
\begin{aligned}
c_{2,1}t^{2\alpha+2\beta}(t-s)^{2\gamma+3}
&\leq
\mathbb{E}\left[(X(t)-X(s))^2\right] \\
&\leq
c_{2,2}
\max\left\{
t^{2\alpha+2\beta},
s^{2\alpha+2\beta}
\right\}
(t-s)^{2\gamma+3}.
\end{aligned}
\end{equation}
\end{lemma}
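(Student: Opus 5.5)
By It\^o's isometry, for $t>s>0$,
\begin{equation*}
\mathbb{E}\left[(X(t)-X(s))^2\right]=\int_0^s r^{2\alpha}\left(\int_s^t u^\beta(u-r)^\gamma\,du\right)^2dr+\int_s^t r^{2\alpha}\left(\int_r^t u^\beta(u-r)^\gamma\,du\right)^2dr=:I_1+I_2 .
\end{equation*}
Both terms are nonnegative. For the lower bound I would therefore discard $I_1$ and keep only the part of $I_2$ where $r$ is close to $t$. Since $H>0$, self-similarity (Mishura--Shklyar) lets me normalize $t=1$: setting $s=1-h$ with $h\in(0,1)$, it suffices to show
\begin{equation*}
c\,h^{2\gamma+3}\le \mathbb{E}\left[(X(1)-X(1-h))^2\right]\le C\max\{1,(1-h)^{2\alpha+2\beta}\}\,h^{2\gamma+3},
\end{equation*}
and then multiply through by $t^{2H}$, using $2H=2\alpha+2\beta+2\gamma+3$.

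For the lower bound, take $h\le 1/2$ first and restrict $I_2$ to $r\in[1-h/2,1]$. There $r$ and $u$ are comparable to $1$, so
\begin{equation*}
\int_r^1 u^\beta(u-r)^\gamma\,du\asymp (1-r)^{\gamma+1},
\end{equation*}
and hence
\begin{equation*}
I_2\gtrsim\int_{1-h/2}^1(1-r)^{2\gamma+2}\,dr\asymp h^{2\gamma+3}.
\end{equation*}
For $h\in(1/2,1)$ the increment variance is continuous in $h$ and strictly positive, since $X(1)-X(1-h)$ has a nonzero integrand on a set of positive measure. It is therefore bounded below by a positive constant, and $h^{2\gamma+3}\le 1$ there, which finishes the lower bound.

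The upper bound is the main obstacle, because the factors $u^\beta$ and $r^{2\alpha}$ become singular or degenerate near $0$ when $h$ is close to $1$. I would split into two regimes.

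\textbf{Case $h\le1/2$.} Here $u\in[s,1]$ with $s\ge1/2$, so $u^\beta\asymp1$. For $I_2$ we also have $r\asymp1$, giving $I_2\lesssim h^{2\gamma+3}$. For $I_1$ the key point is the bound
\begin{equation*}
\int_s^1(u-r)^\gamma\,du\le\min\left\{\frac{(1-r)^{\gamma+1}-(s-r)^{\gamma+1}}{\gamma+1},\ h\,(s-r)^\gamma\right\}.
\end{equation*}
Split $I_1$ at $r=s-h$.
\begin{itemize}[label={}]
\item On $[s-h,s]$, where $r\asymp1$, use $(1-r)^{\gamma+1}\le(2h)^{\gamma+1}$; this gives a contribution $\lesssim h^{2\gamma+3}$.
\item On $[0,s-h]$, use $h(s-r)^\gamma$. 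This gives $h^2\int_0^{s-h} r^{2\alpha}(s-r)^{2\gamma}\,dr$. Because $\gamma<-1/2$, the singularity at $r=s$ dominates, and the integral is $\lesssim h^{2\gamma+1}$. The condition $\alpha>-1/2$ controls the endpoint $r=0$. The contribution is therefore $\lesssim h^{2\gamma+3}$.
\end{itemize}

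\textbf{Case $h\in(1/2,1)$.} Here $s=1-h<1/2$ and the right-hand side is comparable to $\max\{1,s^{2\alpha+2\beta}\}$. I would bound each term separately.
\begin{itemize}[label={}]
\item By self-similarity, $\mathbb{E}[X(s)^2]=c\,s^{2H}\le c\,s^{2\alpha+2\beta}\cdot 1$, since $s<1$ and $2\gamma+3>0$.
\item $\mathbb{E}[X(1)^2]=c$.
\end{itemize}
Then $\mathbb{E}[(X(1)-X(s))^2]\le 2\mathbb{E}[X(1)^2]+2\mathbb{E}[X(s)^2]\lesssim\max\{1,s^{2\alpha+2\beta}\}$, which matches the desired bound because $h\asymp1$ in this regime. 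Rescaling back to general $t$ then yields \eqref{eq moment}.
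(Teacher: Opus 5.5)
Your proof is correct, apart from two small slips noted below. Its core is the paper's: the It\^o-isometry split $I_1+I_2$, and the lower bound from $I_2$ restricted to $r$ near $t$. The near regime also matches: when $t-s$ is small relative to $t$, you split $I_1$ at $r=s-(t-s)$ and use the bounds $(1-r)^{\gamma+1}\le(2h)^{\gamma+1}$ and $h(s-r)^{\gamma}$. After the substitution $r=s-(t-s)v$, these are exactly the paper's inequalities $(1+v)^{\gamma+1}-v^{\gamma+1}\le 1$ on $[0,1]$ and $\le(\gamma+1)v^{\gamma}$ on $[1,\infty)$.

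You genuinely differ in two ways. First, you scale to $t=1$ by self-similarity. Second, when $t-s$ is comparable to $t$, you use the crude bound $2\mathbb{E}X(1)^2+2\mathbb{E}X(s)^2$. The paper instead keeps general $(s,t)$ and estimates $I_1$ and $I_2$ explicitly for $s\le 2(t-s)$, with separate cases for the sign of $\beta$.

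Your route is shorter and actually proves more. Since $\mathbb{E}X(s)^2=s^{2H}\mathbb{E}X(1)^2\le\mathbb{E}X(1)^2$ for $s<1$, the far regime gives $\lesssim h^{2\gamma+3}$ with no factor $s^{2\alpha+2\beta}$, and in the near regime $s\asymp t$. After rescaling, $\mathbb{E}[(X(t)-X(s))^2]\asymp t^{2\alpha+2\beta}(t-s)^{2\gamma+3}$ uniformly in $t>s>0$. So the maximum in the stated upper bound can be replaced by $t^{2\alpha+2\beta}$. The maximum comes from the paper bounding $u^\beta$ by $s^\beta$, or $r^{2\alpha+2\beta}$ by $s^{2\alpha+2\beta}$, when those exponents are negative. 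Your only extra input is self-similarity, which the paper already imports from Mishura--Shklyar and which also follows from a one-line scaling of the kernel.

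Two details need repair.

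\begin{itemize}
\item On $[s-h,s]$ you assert $r\asymp 1$. For $h$ near $1/2$, this interval reaches down to $1-2h\approx 0$, where $r^{2\alpha}$ is unbounded if $\alpha<0$. Either run the near-regime argument only for $h\le 1/4$ (your far-regime bound works for any $h$ bounded away from $0$), or note that $\int_{s-h}^{s}r^{2\alpha}\,dr\lesssim h$ for all $h\le 1/2$ by integrability of $r^{2\alpha}$.
\item Continuity plus positivity on the open interval $(1/2,1)$ does not by itself give a positive infimum. You must include the endpoint $h=1$, where the variance is $\mathbb{E}X(1)^2>0$. More simply, $[1-h/2,1]\subset[1/2,1]$ for every $h\in(0,1)$, so your restriction of $I_2$ already gives $I_2\gtrsim h^{2\gamma+3}$ uniformly, as the paper's choice $r\in[(s+t)/2,t]$ does.
\end{itemize}
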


\begin{proof}
Following the decomposition in \cite[Equation (8)]{MS22a}, for any
$t>s>0$, we have
\begin{equation}\label{xt-xs}
\begin{aligned}
X(t)-X(s)
={}&
\int_0^s r^{\alpha}
\left(
\int_s^t u^\beta(u-r)^\gamma\,du
\right)dW_r\\
&+
\int_s^t r^{\alpha}
\left(
\int_r^t u^\beta(u-r)^\gamma\,du
\right)dW_r .
\end{aligned}
\end{equation}
By Itô's isometry and the orthogonality of stochastic integrals over
disjoint intervals, we obtain
\begin{equation}\label{i12}
\begin{aligned}
\mathbb{E}\left[(X(t)-X(s))^2\right]
={}&
\int_0^s r^{2\alpha}
\left(
\int_s^t u^\beta(u-r)^\gamma\,du
\right)^2dr\\
&+
\int_s^t r^{2\alpha}
\left(
\int_r^t u^\beta(u-r)^\gamma\,du
\right)^2dr\\
=:&\, I_1+I_2 .
\end{aligned}
\end{equation}
	
	 \noindent\textbf{Step 1. Lower bound.}
Since $I_1\ge0$, it suffices to establish a lower bound for $I_2$.

We first consider the case $\beta>0$. Since $u\mapsto u^\beta$ is increasing on $(0,\infty)$, we have
\begin{equation}\label{eq}
\begin{aligned}
I_2
&\ge
\int_s^t r^{2\alpha}
\left(
r^\beta\int_r^t (u-r)^\gamma\,du
\right)^2dr\\
&\ge
\frac{1}{(\gamma+1)^2}
\int_{\frac{s+t}{2}}^t
r^{2\alpha+2\beta}(t-r)^{2\gamma+2}\,dr\\
&\ge
\frac{\min\{2^{-(2\alpha+2\beta)},1\}}
{2^{2\gamma+3}(2\gamma+3)(\gamma+1)^2}
t^{2\alpha+2\beta}(t-s)^{2\gamma+3}.
\end{aligned}
\end{equation}
In the last line, we used the elementary inequality
\begin{equation}\label{eq r0}
r^x
\ge
\min\left\{\left(t/2\right)^x,t^x\right\}
=
\min\{2^{-x},1\}t^x,
\end{equation}
which holds for every fixed $x\in\mathbb R$ and
$r\in[(s+t)/2,t]$.

The proof for $\beta\le0$ is analogous,  with $u^\beta\ge r^\beta$ replaced by $u^\beta\ge t^\beta$ and \eqref{eq r0} applied with $x=2\alpha$. We omit the details.

\noindent\textbf{Step 2. Upper bound.}
We estimate $I_1$ and $I_2$ separately.

\noindent\textbf{Estimate of $I_1$.}
Following the argument in \cite[Lemma~3.1]{WX2022a}, we consider two cases:
\[
s\le 2(t-s)\quad\text{and}\quad s>2(t-s).
\]

We first consider the case $s\leq 2(t-s)$. Using the changes of variables \(u=s+(t-s)v\) and \(r=w(t-s)\), we obtain
\begin{equation*}
I_1
=
(t-s)^{2H}
\int_0^{\frac{s}{t-s}}
w^{2\alpha}
\left[
\int_0^1
\left(
\frac{s}{t-s}+v
\right)^\beta
\left(
v+\frac{s}{t-s}-w
\right)^\gamma
\,dv
\right]^2dw .
\end{equation*}
 
	Since $\gamma\in (-1,-1/2)$,   for every  $w\in [0,  \frac{s}{t-s}]$,  we have
	\begin{equation*}
		\begin{split}
			\int_{0}^{1} \left(\frac{s}{t-s} + v\right)^{\beta} \left( v + \frac{s}{t-s} - w\right)^{\gamma} dv 	\le\, \frac{1}{\gamma+1}\max\left\{\left(\dfrac{t}{t-s}\right)^\beta, \left(\dfrac{s}{t-s}\right)^\beta\right\}.
		\end{split}
	\end{equation*} 
	Consequently, we get
\begin{equation}\label{I1upperbound2}
		\begin{split}
			I_1 \le &\,   \frac{(t-s)^{2H} \cdot \max\left\{ (\frac{t}{t-s})^{2\beta}, \, (\frac{s}{t-s})^{2\beta}\right\}}{(\gamma+1)^2}   \int_{0}^{\frac{s}{t-s}} w^{2\alpha} dw\\
			= &\, \frac{1}{(\gamma+1)^2 (2\alpha+1)}  \cdot \max\left\{  t^{2\beta}s^{2\alpha+1},\, s^{2\alpha+2\beta+1}\right\}(t-s)^{2\gamma+2}  \\
			\le &\, \frac{1}{(\gamma+1)^2 (2\alpha+1)}  \cdot \max\left\{  t^{2\beta} t^{2\alpha+1},\, 3 s^{2\alpha+2\beta} (t-s)\right\}(t-s)^{2\gamma+2}  \\
			\le &\, \frac{1}{(\gamma+1)^2 (2\alpha+1)}  \cdot \max\left\{  3t^{2\alpha+2\beta},\, 3s^{2\alpha+2\beta}\right\}(t-s)^{2\gamma+3},
		\end{split}
	\end{equation} 
where we have used $2\alpha+1>0$ and $s<t\le3(t-s)$.

We next consider the case $s>2(t-s)$, i.e., $2t/3<s<t$.
Hence,
\begin{align*}
\int_s^t u^\beta (u-r)^\gamma du
&\le
\max\{t^\beta,s^\beta\}
\int_s^t (u-r)^\gamma du\\
&\le
c_{2,3}s^\beta
\left[
(t-r)^{\gamma+1}-(s-r)^{\gamma+1}
\right],
\end{align*}
where
$c_{2,3}=\frac{1}{\gamma+1}\max\{(3/2)^\beta,1\}$.

    Using the change of variables $r = s - (t-s)v$, we have
	\begin{equation} \label{eq:I1_bound}
		\begin{split}	 
			I_1 \le  & \,  c_{2,3}^{2} s^{2\beta}\int_{0}^{s} r^{2\alpha} \bigl((t-r)^{\gamma+1}-(s-r)^{\gamma+1}\bigr)^2 dr \\
			=  &\,  c_{2,3}^{2} s^{2\beta}(t-s)^{2\alpha+2\gamma+3} \int_{0}^{\frac{s}{t-s}} \left(\frac{s}{t-s}-v\right)^{2\alpha}[(1+v)^{\gamma+1}-v^{\gamma+1}]^2  dv\\
			= &\,  c_{2,3}^{2} s^{2\beta}(t-s)^{2\alpha+2\gamma+3} \left( \int_{0}^{1} \bigl[(1+v)^{\gamma+1}-v^{\gamma+1}\bigr]^2 \left(\frac{s}{t-s}-v\right)^{2\alpha} dv \right. \\
			&\qquad \left. + \int_{1}^{\frac{s}{t-s}} \bigl[(1+v)^{\gamma+1}-v^{\gamma+1}\bigr]^2 \left(\frac{s}{t-s}-v\right)^{2\alpha} dv \right).
		\end{split}
	\end{equation}
	Combining the interval inequality
\[
\frac{s}{2(t-s)} \leq \frac{s}{t-s} - v \leq \frac{s}{t-s},
\qquad v\in[0,1],
\]
with the elementary inequality
\[
(1+v)^{\gamma+1}-v^{\gamma+1}\le1,\qquad v\in[0,1],
\]
we obtain
	\begin{equation}\label{eq:I11_bound}
		\int_{0}^{1} \bigl[(1+v)^{\gamma+1}-v^{\gamma+1}\bigr]^2 \left(\frac{s}{t-s}-v\right)^{2\alpha} dv \leq \max\left\{ 1,\;  2^{-2\alpha}\right\} \left( \frac{s}{t-s}\right)^{2\alpha}. 
	\end{equation}
Furthermore, by the elementary inequality
\begin{equation*}
(1+v)^{\gamma+1}-v^{\gamma+1}
\leq
(\gamma+1)v^\gamma,
\qquad v\ge1,
\label{eq:basic-ineq}
\end{equation*}
we obtain
\begin{equation*}
		\begin{split}
&\int_{1}^{\frac{s}{t-s}}
\left[(1+v)^{\gamma+1}-v^{\gamma+1}\right]^2
\left(\frac{s}{t-s}-v\right)^{2\alpha}dv
\nonumber\\
&\le
(\gamma+1)^2
\int_{1}^{\frac{s}{t-s}}
v^{2\gamma}
\left(\frac{s}{t-s}-v\right)^{2\alpha}dv
\nonumber\\
&=
(\gamma+1)^2
\left(\frac{s}{t-s}\right)^{2\alpha+2\gamma+1}
\int_{\frac{t-s}{s}}^1
w^{2\gamma}(1-w)^{2\alpha}dw \nonumber \\
&=
(\gamma+1)^2
\left(\frac{s}{t-s}\right)^{2\alpha+2\gamma+1}
\left(
\int_{\frac{t-s}{s}}^{\frac23}
w^{2\gamma}(1-w)^{2\alpha}dw
+
\int_{\frac23}^{1}
w^{2\gamma}(1-w)^{2\alpha}dw
\right).
		\end{split}
\end{equation*}
Using $1/3\le 1-w\le 1$ and $w\ge 2/3$ in the first and second integrals, respectively, there exist constants $c_{2,4},c_{2,5}>0$ such that
\[
\int_{\frac{t-s}{s}}^1
w^{2\gamma}(1-w)^{2\alpha}\,dw
\le
c_{2,4}
\left(\frac{t-s}{s}\right)^{2\gamma+1}
+c_{2,5}.
\]
Since \(\left[s/(t-s)\right]^{2\gamma+1}\le 2^{2\gamma+1}\), there exists a constant \(c_{2,6}>0\) such that
\begin{align}\label{eq:I12_bound}
\int_{1}^{\frac{s}{t-s}}
\left[(1+v)^{\gamma+1}-v^{\gamma+1}\right]^2
\left(\frac{s}{t-s}-v\right)^{2\alpha}dv
\le
c_{2,6}
\left(\frac{s}{t-s}\right)^{2\alpha}.
\end{align} 
From \eqref{eq:I1_bound}, \eqref{eq:I11_bound}, and \eqref{eq:I12_bound}, there exists a constant \(c_{2,7}>0\) such that
\begin{equation}\label{I1upperbound3}
I_1\leq c_{2,7}s^{2\alpha+2\beta}(t-s)^{2\gamma+3}.
\end{equation}
Thus,   \eqref{I1upperbound2} and
\eqref{I1upperbound3} give the required bound for $I_1$.

\noindent\textbf{Estimate of $I_2$.} 
We distinguish the two cases $\beta>0$ and $\beta\le0$. 

For $\beta>0$, we again consider $s\le2(t-s)$ and $s>2(t-s)$. When $s\le2(t-s)$, since $u\mapsto u^\beta$ is increasing on $(0,\infty)$, we have
\begin{equation}\label{I2upperbound1}
\begin{aligned}
I_2
&\leq
\frac{t^{2\beta}}{(\gamma+1)^2}
\int_s^t r^{2\alpha}(t-r)^{2\gamma+2}\,dr\\
&\leq
\frac{t^{2\beta}}{(\gamma+1)^2}
\int_0^t r^{2\alpha}(t-r)^{2\gamma+2}\,dr\\
&=
\frac{B(2\alpha+1,2\gamma+3)}
{(\gamma+1)^2}
t^{2\alpha+2\beta+2\gamma+3}\\
&\leq
\frac{3^{2\gamma+3}B(2\alpha+1,2\gamma+3)}
{(\gamma+1)^2}
t^{2\alpha+2\beta}(t-s)^{2\gamma+3},
\end{aligned}
\end{equation}
where the last inequality follows from \(t\le 3(t-s)\) and \(2\gamma+3>0\).

When $s>2(t-s)$, proceeding as in the first line of \eqref{I2upperbound1}, we have
\begin{equation}\label{I2upperbound2}
\begin{aligned}
I_2
&\leq
\frac{t^{2\beta}}{(\gamma+1)^2}
\int_s^t r^{2\alpha}(t-r)^{2\gamma+2}dr\\
&\leq
\frac{t^{2\beta}}{(\gamma+1)^2}
\max\{t^{2\alpha},s^{2\alpha}\}
\int_s^t(t-r)^{2\gamma+2}dr\\
&=
\frac{1}{(\gamma+1)^2(2\gamma+3)}
\max\{t^{2\alpha+2\beta},
t^{2\beta}s^{2\alpha}\}
(t-s)^{2\gamma+3}\\
&\leq
\frac{1}{(\gamma+1)^2(2\gamma+3)}
\max\left\{
t^{2\alpha+2\beta},
\left(3/2\right)^{2\beta}s^{2\alpha+2\beta}
\right\}
(t-s)^{2\gamma+3}.
\end{aligned}
\end{equation}
Here, in the second inequality, we have used
\begin{equation}\label{eq r mono1}
r^x\leq\max\{t^x,s^x\},
\qquad r\in[s,t],
\end{equation}
which follows from the monotonicity of the function
$r\mapsto r^x$ on $[s,t]$. The last inequality follows from \(t<(3/2)s\).

For $\beta\le0$, since the function $u\mapsto u^\beta$ is decreasing on $(0,\infty)$, we have
\begin{equation}\label{I2upperbound3}
\begin{aligned}
I_2
&\leq
\int_s^t r^{2\alpha}
\left(
r^\beta\int_r^t (u-r)^\gamma du
\right)^2dr\\
&=
\frac{1}{(\gamma+1)^2}
\int_s^t r^{2\alpha+2\beta}
(t-r)^{2\gamma+2}dr\\
&\leq
\frac{1}{(\gamma+1)^2(2\gamma+3)}
\max\left\{
t^{2\alpha+2\beta},
s^{2\alpha+2\beta}
\right\}
(t-s)^{2\gamma+3},
\end{aligned}
\end{equation}
where we have used \eqref{eq r mono1} with
$x=2\alpha+2\beta$.
 
Combining these bounds with 
\eqref{i12} proves \eqref{eq moment}. The proof is complete. 
\end{proof}

\begin{remark}  By Lemma~\ref{lem mom}, for every fixed $t>0$, there exist positive constants
$c_{2,8}$ and $c_{2,9}$ such that
$$
c_{2,8}t^{2\alpha+2\beta}
\leq
\liminf_{s\uparrow t}
\frac{\mathbb{E}\bigl[(X(t)-X(s))^2\bigr]}
{(t-s)^{2\gamma+3}}
\leq
\limsup_{s\uparrow t}
\frac{\mathbb{E}\bigl[(X(t)-X(s))^2\bigr]}
{(t-s)^{2\gamma+3}}
\leq
c_{2,9}t^{2\alpha+2\beta}.
$$
The asymptotic estimate in \cite[Proposition~2]{MS22a} is a local estimate as $s\uparrow t$  and therefore does not provide the uniform bounds needed below. Lemma \ref{lem mom} supplies the uniform two-sided estimates required for the one-sided SLND and the small ball bounds.

\end{remark}
\begin{remark}
 For every compact interval $[T_1,T_2]\subset(0,\infty)$, Lemma~\ref{lem mom}
implies that there exist positive constants $c_{2,10}$ and $c_{2,11}$,
depending on $T_1$ and $T_2$, such that
$$
c_{2,10}(t-s)^{2\gamma+3}
\leq
\mathbb{E}\bigl[(X(t)-X(s))^2\bigr]
\leq
c_{2,11}(t-s)^{2\gamma+3},
\qquad T_1\leq s<t\leq T_2.
$$
Consequently, $X$ satisfies the quasi-helix property with exponent
$\gamma+\frac32$ on every compact interval bounded away from the origin,
in the sense of \cite[Definition~5.1]{MS22a}.
\end{remark}

\subsection{One-sided SLND} 
\begin{proposition}\label{lem SLND}
Assume that Condition \eqref{eq constant cond} holds. Then, for every $t> s\ge0$,
\begin{align}\label{SLND}
\operatorname{Var}
\left(
X(t)\mid \sigma\{X(r):\, 0\le r\le s\}
\right)
\geq
c_{2,1}t^{2\alpha+2\beta}|t-s|^{2\gamma+3},
\end{align}
where $\operatorname{Var}(X(t)\mid\sigma\{X(r):r\le s\})$ denotes the
conditional variance of $X(t)$ given the $\sigma$-algebra generated by
$\{X(r):r\le s\}$.
\end{proposition}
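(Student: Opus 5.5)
The plan is to use the Volterra structure: $X(r)$ for $r\le s$ is measurable with respect to $\mathcal F^W_s=\sigma\{W(v):v\le s\}$, since the kernel $r^\alpha\int_r^t u^\beta(u-r)^\gamma du$ vanishes for $r>t$. Hence $\sigma\{X(r):0\le r\le s\}\subset\mathcal F^W_s$. For jointly Gaussian variables the conditional variance is the squared $L^2$-distance to the closed linear span. Enlarging the conditioning $\sigma$-algebra can only decrease this distance, so
\[
\operatorname{Var}\bigl(X(t)\mid \sigma\{X(r):r\le s\}\bigr)\ \ge\ \operatorname{Var}\bigl(X(t)\mid \mathcal F^W_s\bigr).
\]
To make this rigorous, I would note that $\mathbb E[X(t)\mid\mathcal F^W_s]$ is the $L^2$-projection onto $L^2(\mathcal F^W_s)$, which contains the Gaussian linear span of $\{X(r):r\le s\}$. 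The residual of the larger projection is therefore no larger than the residual of the smaller one.

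Next I would compute $\operatorname{Var}(X(t)\mid\mathcal F^W_s)$ explicitly via the decomposition \eqref{xt-xs}. Write
\[
X(t)=\int_0^s K(t,r)\,dW_r+\int_s^t K(t,r)\,dW_r,\qquad K(t,r)=r^\alpha\int_r^t u^\beta(u-r)^\gamma\,du .
\]
The first term is $\mathcal F^W_s$-measurable, and the second is independent of $\mathcal F^W_s$. The conditional variance therefore equals
\[
\int_s^t r^{2\alpha}\Bigl(\int_r^t u^\beta(u-r)^\gamma du\Bigr)^2dr=I_2 ,
\]
which is exactly the quantity $I_2$ from \eqref{i12}. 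This step is deterministic, since the conditional variance of a Gaussian vector does not depend on the conditioning values.

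Finally, Step 1 of the proof of Lemma~\ref{lem mom} already shows $I_2\ge c_{2,1}t^{2\alpha+2\beta}(t-s)^{2\gamma+3}$, treating both $\beta>0$ and $\beta\le 0$. That step used only the integral over $[(s+t)/2,t]$, not $I_1$, so it gives \eqref{SLND} for $t>s>0$.

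The case $s=0$ needs separate treatment. Here $X(0)=0$, so the conditioning is trivial, and the claim is $\operatorname{Var}X(t)\ge c_{2,1}t^{2\alpha+2\beta}t^{2\gamma+3}$. This follows from the same lower bound on $I_2$ with $s=0$: the integral over $[t/2,t]$ is still controlled by \eqref{eq r0}.

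The only delicate point is the monotonicity of conditional variance under enlargement of the $\sigma$-algebra, and checking that $X(r)$, $r\le s$, is genuinely $\mathcal F^W_s$-measurable. The stochastic integral of the deterministic kernel over $[0,r]$ settles the measurability. For the monotonicity, I would phrase the conditional variance as $\mathbb E[(X(t)-\mathbb E[X(t)\mid\mathcal G])^2]$ and use the tower property: for $\mathcal G\subset\mathcal F^W_s$, the $L^2$ distance to $L^2(\mathcal G)$ is at least the distance to $L^2(\mathcal F^W_s)$.
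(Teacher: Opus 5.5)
Your proof is correct, but it takes a different route from the paper. The paper uses the same decomposition of $X(t)$ into the part over $[0,s]$, which is measurable with respect to $\mathcal F^W_s=\sigma\{W(r):r\le s\}$, and the independent part over $[s,t]$. It then invokes \cite[Theorem~2]{MS22b}, the equality $\sigma\{X(r):0\le r\le s\}=\sigma\{W(r):0\le r\le s\}$, which comes from the inverse Volterra representation, to conclude that the conditional variance is \emph{exactly} $I_2$. You use only the inclusion $\sigma\{X(r):r\le s\}\subset\mathcal F^W_s$, which follows from adaptedness of the Volterra integral. You combine it with the fact that the $L^2$-distance from $X(t)$ to $L^2(\mathcal F^W_s)$ is at most its distance to the smaller closed Gaussian span of $\{X(r):r\le s\}$. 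This gives $\operatorname{Var}\bigl(X(t)\mid\sigma\{X(r):r\le s\}\bigr)\ge I_2$, which is all the proposition claims. The paper's route identifies the prediction error with $I_2$, i.e.\ the innovations of $X$ and $W$ coincide, but it depends on a nontrivial invertibility result from the literature. Your route is self-contained and works for any Volterra process $\int_0^t K(t,r)\,dW_r$, invertible or not. Since the uses of one-sided SLND in Section~3 need only the lower bound, nothing is lost. Two minor remarks. The case $s=0$ needs no separate treatment, since $\mathcal F^W_0$ is trivial and your general argument already covers it. Measurability should be read with respect to the completed Brownian filtration, since stochastic integrals are defined only up to null sets.
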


\begin{proof}
For $t>s\ge0$, we decompose $X(t)$ as
\begin{align*}
X(t)
={}
\int_0^{s} r^{\alpha}
\left(
\int_r^t u^{\beta}(u-r)^{\gamma}du
\right)dW_r
+
\int_s^t r^{\alpha}
\left(
\int_r^t u^{\beta}(u-r)^{\gamma}du
\right)dW_r .
\end{align*}

The first term is measurable with respect to
$\sigma\{W(r):r\le s\}$, while the second term is independent of this
$\sigma$-algebra. Moreover, by \cite[Theorem~2]{MS22b},
\[
\sigma\{X(r):\, 0\le r\le s\}
=
\sigma\{W(r):\, 0\le r\le s\}.
\]
Therefore,
\begin{align*}
\operatorname{Var}
\left(
X(t)\mid\sigma\{X(r):r\le s\}
\right)
&=
\operatorname{Var}
\left(
X(t)\mid\sigma\{W(r):r\le s\}
\right)
\\
&=
\int_s^t r^{2\alpha}
\left(
\int_r^t u^\beta(u-r)^\gamma du
\right)^2dr .
\end{align*} 
This is precisely the term $I_2$ in \eqref{i12}. Applying the lower bound for \(I_2\) obtained in the proof of Lemma~\ref{lem mom} gives \eqref{SLND}.

  The proof is complete. 
\end{proof}

\section{Small ball probabilities}

In this section, we prove the upper bounds in Theorems \ref{lem small0t} and~\ref{lem smallt-rt+r} by combining one-sided SLND with the methods of Monrad and Rootz\'en
\cite[Theorem~2.1]{MR1995} and Xiao \cite[Theorem~3.1]{Xiao2008}, and  prove the lower bounds via Talagrand’s principle \cite[Lemma~2.2]{talagrand1996lower} in Ledoux’s formulation \cite[pp.~257, (7.11)--(7.13)]{Ledoux}.

\subsection{Proof of Theorem \ref{lem small0t}} 

  We first bound the covering number of $[0,T]$, as required by Talagrand’s lower-bound principle.

\begin{lemma}\label{lem:covering}
Assume that Condition \eqref{eq constant cond} holds. Then there exists a
constant $c_{3,1}>0$ such that, for every $T>0$ and
$0<\varepsilon\le T^H$,
\begin{equation}\label{eq:covering_bound}
N\bigl([0,T],d_X,\varepsilon\bigr)
\le
c_{3,1}
T^{\frac{2H}{2\gamma+3}}
\varepsilon^{-\frac{2}{2\gamma+3}},
\end{equation}
where \(d_X\) is the canonical metric defined by
\[
d_X(s,t):=\bigl(\mathbb{E}|X(s)-X(t)|^2\bigr)^{1/2},
\]
and \(N([0,T],d_X,\varepsilon)\) denotes the smallest number of \(d_X\)-balls of radius \(\varepsilon\) needed to cover \([0,T]\).
\end{lemma}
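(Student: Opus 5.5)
By self-similarity of $X$ (\cite[Proposition~1]{MS22a}) it suffices to treat $T=1$. Since $\{X(Ts)\}_{s\ge0}$ and $\{T^HX(s)\}_{s\ge0}$ have the same law, we have $d_X(Ts,Tt)=T^Hd_X(s,t)$, hence
\[
N\bigl([0,T],d_X,\varepsilon\bigr)=N\bigl([0,1],d_X,\varepsilon T^{-H}\bigr).
\]
So I only need $N([0,1],d_X,\delta)\le c_{3,1}\delta^{-2/(2\gamma+3)}$ for $0<\delta\le1$.

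\textbf{Step 1: the interval near the origin.} Since $X(0)=0$ and $\mathbb{E}[X(t)^2]=t^{2H}\mathbb{E}[X(1)^2]$, the single ball centred at $0$ of radius $\delta$ covers $[0,t_0]$. Here
\[
t_0:=\bigl(\delta/\sigma_1\bigr)^{1/H}\wedge 1,\qquad \sigma_1:=(\mathbb{E}X(1)^2)^{1/2}.
\]
This removes the region where the factor $s^{2\alpha+2\beta}$ in Lemma~\ref{lem mom} could blow up when $\alpha+\beta<0$.

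\textbf{Step 2: dyadic shells.} Split $[t_0,1]$ into the shells $J_k=[2^{-k-1},2^{-k}]$ for $0\le k\le K$, where $K$ is the smallest integer with $2^{-K-1}\le t_0$. On $J_k$, the upper bound in Lemma~\ref{lem mom} gives, since $s,t\asymp 2^{-k}$,
\[
d_X(s,t)\le c\,2^{-k(\alpha+\beta)}|t-s|^{\gamma+\frac32}.
\]
Hence a uniform grid on $J_k$ with mesh $h_k=(\delta 2^{k(\alpha+\beta)}/c)^{1/(\gamma+3/2)}$ gives a $\delta$-cover. Its cardinality is at most
\[
1+2^{-k}h_k^{-1}\le 1+c'\,\delta^{-\frac{2}{2\gamma+3}}\,2^{-k\left(1+\frac{2(\alpha+\beta)}{2\gamma+3}\right)}
=1+c'\,\delta^{-\frac{2}{2\gamma+3}}\,2^{-\frac{2kH}{2\gamma+3}},
\]
using $2\gamma+3+2\alpha+2\beta=2H$. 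Equivalently, one can rescale $J_k$ to $[1/2,1]$ by self-similarity and cover it at scale $\delta2^{kH}$.

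\textbf{Step 3: summation.} Summing over $k\le K$, the geometric series $\sum_k 2^{-2kH/(2\gamma+3)}$ converges because $H>0$. The additive $1$'s contribute at most $K+1\le c(1+\log(1/\delta))$, which is $\le c\,\delta^{-2/(2\gamma+3)}$ for $\delta\le1$. Adding the single ball from Step 1 gives $N([0,1],d_X,\delta)\le c_{3,1}\delta^{-2/(2\gamma+3)}$, and hence \eqref{eq:covering_bound}.

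The only delicate point is the degeneracy of the increment bound near $0$ when $\alpha+\beta<0$. The dyadic decomposition handles it: the shell counts decay geometrically exactly because $H>0$, and Step 1 truncates the shells at scale $t_0\asymp\delta^{1/H}$. I expect the main care to lie in checking that the exponent bookkeeping in Step 2 reduces precisely to $2H/(2\gamma+3)$.
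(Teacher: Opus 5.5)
Your proof is correct, but it organizes the count differently from the paper. The paper does not rescale to $T=1$ or use dyadic shells. It works directly on $[0,T]$ with a single power-law grid $t_n=\varepsilon^{1/H}n^{p}$, $p=(2\gamma+3)/(2H)$, and covers $[0,t_1]$ by the ball at $0$ via self-similarity, as in your Step~1. On each $[t_{n-1},t_n]$, $n\ge2$, it combines Lemma~\ref{lem mom} with the mean value theorem $n^p-(n-1)^p\asymp n^{p-1}$. The exponent $p$ is tuned so that $t_n^{2\alpha+2\beta}(t_n-t_{n-1})^{2\gamma+3}\asymp\varepsilon^2$ uniformly in $n$. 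The count is then just the number of grid points below $T$, about $T^{2H/(2\gamma+3)}\varepsilon^{-2/(2\gamma+3)}$, with no logarithmic overhead.

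Your shells with uniform mesh $h_k$ discretize the same heuristic: the cell length at time $s$ is $\asymp(\delta s^{-(\alpha+\beta)})^{1/(\gamma+3/2)}$. Your version is somewhat more robust, since you only need $s^{\alpha+\beta}$ to be comparable on each shell rather than an exactly tuned grid. The cost is the extra $K+1=O(1+\log(1/\delta))$ from the per-shell ``$+1$'' terms, which you correctly absorb using $\delta\le1$.

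What the paper's choice buys is reuse. The two-sided estimate \eqref{eq:power-increment} makes every cell of that grid exactly critical. So the same points $t_n$ serve again in the upper bound of Theorem~\ref{lem small0t}, where one-sided SLND gives $\operatorname{Var}(X(t_n)\mid X(t_j),\,j\le n-1)\ge c\,\varepsilon^2$ directly.
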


 \begin{proof}
Set $p:=(2\gamma+3)/(2H)$ and define
\begin{equation}\label{eq:grid-def}
t_n:=\varepsilon^{1/H}n^{p}, \qquad n\ge0.
\end{equation}

By the $H$-self-similarity of $X$ and $H>0$, for every $u\in[0,t_1]$,
\begin{align}\label{eq:first-grid-interval}
    d_X(0,u)^2
=\mathbb{E}|X(u)|^2
=u^{2H}\mathbb{E}|X(1)|^2
\leq t_1^{2H}\mathbb{E}|X(1)|^2
=\varepsilon^2\mathbb{E}|X(1)|^2.
\end{align}
By Lemma \ref{lem mom}  and  \eqref{eq:grid-def}, for $n\geq2$ and $u\in[t_{n-1},t_n]$,
\begin{align}\label{eq:grid-interval-moment}
d_X(t_{n-1},u)^2
&\leq c_{2,2}
\max\{u^{2\alpha+2\beta},t_{n-1}^{2\alpha+2\beta}\}
(u-t_{n-1})^{2\gamma+3} \notag\\
&\leq c_{2,2}
\max\{t_n^{2\alpha+2\beta},t_{n-1}^{2\alpha+2\beta}\}
(t_n-t_{n-1})^{2\gamma+3}\\
&\leq c_{2,2}\varepsilon^{2}
\max\left\{n^{2(\alpha+\beta)p},(n-1)^{2(\alpha+\beta)p} \right\}
\bigl[n^{p}-(n-1)^{p}\bigr]^{2\gamma+3}\notag.
\end{align}
Since $n/2\leq n-1<\xi_n<n$, there exists a constant $c_{3,2}=\max\{1, 2^{-2(\alpha+\beta)p}\}$ such that
\begin{equation}
\max\left\{n^{2(\alpha+\beta)p},(n-1)^{2(\alpha+\beta)p}\right\}
\leq c_{3,2} n^{2(\alpha+\beta)p},
\qquad n\geq2.
\label{eq:power-maximum}
\end{equation}
Moreover, by the mean value theorem, there exist  constants $c_{3,3}, c_{3,4}$ such that
\begin{align}\label{eq:power-increment}
 c_{3,3} n^{p-1} \le n^{p}-(n-1)^{p}
=p\xi_n^{p-1}\leq c_{3,4} n^{p-1},
\qquad n\geq2,
\end{align}
where $c_{3,3}=\min\{p\cdot 2^{1-p}, p \}$ and $c_{3,4}=\max\{p,p \cdot 2^{1-p}\}.$

By \eqref{eq:grid-interval-moment}, \eqref{eq:power-maximum}, \eqref{eq:power-increment},  and $H=\alpha+\beta+\gamma+3/2$, we obtain
\begin{align}\label{eq:uniform-grid-ball}
d_X(t_{n-1},u)^2
\leq c_{2,2}c_{3,2}c_{3,4}^{2\gamma+3}
\varepsilon^{2}, \qquad
u\in[t_{n-1},t_n],\quad n\geq2.
\end{align}
Let $c_{3,5}=\max\{1,\sqrt{\mathbb{E}|X(1)|^2},c_{2,2}^{1/2}c_{3,2}^{1/2}c_{3,4}^{\gamma+3/2}\}$. From \eqref{eq:first-grid-interval} and \eqref{eq:uniform-grid-ball}, we obtain
\begin{align*}
d_X(t_{n-1},u)
\leq c_{3,5},
\varepsilon \qquad
u\in[t_{n-1},t_n],\quad n\geq1.
\end{align*}

Let
\begin{equation*}
L_\varepsilon
:=\max\{n\geq1:t_n\le T\}
=\left\lfloor
T^{\frac{1}{p}}\varepsilon^{-\frac{2}{2\gamma+3}}
\right\rfloor.
\end{equation*}
Since $t_{L_\varepsilon}\leq T<t_{L_\varepsilon+1}$, the interval $[0,T]$ can be covered by the intervals $[t_{n-1},t_n]$, $1\leq n\leq L_\varepsilon$, together with the possible remaining interval $[t_{L_\varepsilon},T]$. Therefore,
\begin{align}\label{eq:auxiliary-covering}
N([0,T],d_X,c_{3,5}\varepsilon)
\le L_\varepsilon+1
\le 2L_\varepsilon\leq 2T^{\frac{1}{p}}
\varepsilon^{-\frac{2}{2\gamma+3}}.
\end{align}

Finally, for every given $0<\varepsilon\leq T^H$, apply \eqref{eq:auxiliary-covering} with $\varepsilon/c_{3,5}$ in place of $\varepsilon$. Since $c_{3,5}\geq1$, we still have $\varepsilon/c_{3,5}\leq T^H$, and therefore
\begin{align*}
N\bigl([0,T],d_X,\varepsilon\bigr)
\leq 2c_{3,5}^{\frac{2}{2\gamma+3}}
T^{\frac{1}{p}}
\varepsilon^{-\frac{2}{2\gamma+3}}=2c_{3,5}^{\frac{2}{2\gamma+3}}
T^{\frac{2H}{2\gamma+3}}
\varepsilon^{-\frac{2}{2\gamma+3}}.
\end{align*}
Thus \eqref{eq:covering_bound} holds, and the proof is complete.
\end{proof}

\begin{proof}[\textbf{Proof of Theorem~\ref{lem small0t}}] We adapt the argument of Wang and Xiao  \cite[Proposition 5.1]{WX2022a}. 
	By \eqref{eq:covering_bound}, we have
\begin{align}\label{Eq: Ncov}
N\left([0,t],d_X,\varepsilon\right)\le  c_{3,1}\, t^{\frac{2H}{2\gamma+3}}\varepsilon^{- \frac 2 {2\gamma+3}}.
	\end{align}
Applying Lemma \ref{Lem:Ta93} with \eqref{Eq: Ncov}, we obtain the lower bound in \eqref{Eq small0t}.

We next prove  the upper bound in \eqref{Eq small0t}. Let $\{t_n\}_{n\ge0}$ be defined  by \eqref{eq:grid-def}, that is
\begin{equation}
   \label{eq:grid-def1}
t_n:=\varepsilon^{\frac1H}n^{\frac{2\gamma+3}{2H}},
\qquad n\geq0.
\end{equation}
According to Proposition~\ref{lem SLND}, \eqref{eq:power-increment}, and \eqref{eq:grid-def1}, we have
\begin{equation}\label{eq:varlower1}
\begin{split}
\operatorname{Var}\bigl(X(t_n)\mid X(t_j): j\le n-1\bigr)
&\ge
c_{2,1} t_{n}^{2\alpha+2\beta} |t_n-t_{n-1}|^{2\gamma+3} \\
&=
c_{2,1}
\varepsilon^{2}
n^{\frac{(\alpha+\beta)(2\gamma+3)}{H}}
\cdot
\bigl(n^{\frac{2\gamma+3}{2H}}-(n-1)^{\frac{2\gamma+3}{2H}}\bigr)^{2\gamma+3} \\
&\ge
c_{2,1}c_{3,3}^{2\gamma+3} \varepsilon^2.
\end{split}
\end{equation}

Let
\begin{equation*}
L_\varepsilon
:=\max\{n\geq1:t_n\le t\}
=\left\lfloor
t^{\frac{2H}{2\gamma+3}}\varepsilon^{-\frac{2}{2\gamma+3}}\right\rfloor.
\end{equation*}

Since $\varepsilon<t^H$, we have
\[
A_\varepsilon:=
t^{\frac{2H}{2\gamma+3}}
\varepsilon^{-\frac{2}{2\gamma+3}}>1.
\]
Consequently,
\[
L_\varepsilon=\lfloor A_\varepsilon\rfloor
\ge \frac12 A_\varepsilon
=
\frac12
t^{\frac{2H}{2\gamma+3}}
\varepsilon^{-\frac{2}{2\gamma+3}}.
\]

  By Anderson's inequality \cite{Anderson55} and \eqref{eq:varlower1}, we have
\begin{equation*}\label{rho}
\mathbb{P}\Bigl\{ |X(t_n)|\le\varepsilon \;\Big|\; \sigma\bigl\{X(t_j):1\le j\le n-1\bigr\} \Bigr\}
\le \ \Phi(c_{2,1}^{-1/2}c_{3,3}^{-(\gamma+3/2)}) - \Phi(-c_{2,1}^{-1/2}c_{3,3}^{-(\gamma+3/2)})=:\eta,
\end{equation*}
	where  $\Phi$ is the distribution function of a standard normal random variable.  By repeated conditioning,
	\[
\begin{aligned}
\mathbb{P}\left\{ \sup_{s\in[0,t]}|X(s)|\le\varepsilon \right\}
\le
\mathbb{P}\left\{ \max_{1\le n\le L_\varepsilon} |X(t_n)|\le\varepsilon \right\} 
\le
\eta^{L_\varepsilon}
\le
\exp\!\left(-c_{3,6} t^{\frac{2H}{2\gamma+3}}\, \varepsilon^{-\frac{2}{2\gamma+3}}\right),
\end{aligned}
\]
	where $c_{3,6} = - \frac12 \log\eta \in (0, \infty).$ The proof is complete.
\end{proof}

\subsection{Proof of Theorem \ref{lem smallt-rt+r}}   
\begin{proof}[Proof of Theorem \ref{lem smallt-rt+r}]
For $t>0$ and $r\in (0,t/2)$,  set
	 $$I(t, r):= [t - r, t + r].$$
     By Lemma \ref{lem mom}, there exist constants $c_{3,7}$ and $c_{3,8} \in (0, \infty)$ such that,   for all  $s_1,  s_2 \in I(t, r)$,
	\begin{align}\label{eq monment t}
	c_{3,7}t^{2\alpha+2\beta} |s_1 - s_2|^{2\gamma+3}\le	\mathbb E\left[(X(s_1) - X(s_2))^2\right] \le c_{3,8}t^{2\alpha+2\beta} |s_1  - s_2|^{2\gamma+3}.
	\end{align}
	It follows that  there exists a constant \(c_{3,9} \in (0, \infty)\) such  that,  for all \(0 < \varepsilon < t^{\alpha+\beta} r^{\gamma+\frac32}\),
	\[
	N(I(t, r),d_X,\varepsilon)  \leq c_{3,9} t^{\frac{2\alpha+2\beta}{2\gamma+3}} \varepsilon^{-\frac{2}{2\gamma+3}} r.
	\] 
Lemma \ref{Lem:Ta93} gives the lower bound in \eqref{Eq smallt-rt+r}. Moreover,   \eqref{eq monment t} and Proposition \ref{lem SLND} verify the hypotheses of Monrad and Rootz\'en  \cite[Theorem~2.1]{MR1995}. Therefore, that theorem yields   the upper bound in  \eqref{Eq smallt-rt+r}.

The proof is complete. 
\end{proof}

\begin{remark}\label{rem:local-scaling}
By Lemma~\ref{eq moment}, for every fixed $t>0$, there exist constants
$c_{3,10},c_{3,11}\in(0,\infty)$ such that, for all sufficiently small
$r>0$,
$$
 c_{3,10}
t^{\alpha+\beta}r^{\gamma+\frac32}\le \bigl(\mathbb{E}|X(t+r)-X(t)|^2\bigr)^{1/2}
\le c_{3, 11}
t^{\alpha+\beta}r^{\gamma+\frac32}.
$$
Thus, the restriction
$$
0<\varepsilon<
t^{\alpha+\beta}r^{\gamma+\frac32}
$$
in Theorem~\ref{lem smallt-rt+r}  describes the natural local small ball
regime.
\end{remark}

\section{Chung's LIL for $X$ at  $t>0$ }
 In this section, we prove  Chung's LIL for $X$ at an arbitrary fixed point $t>0$, as stated in Theorem~\ref{thm CLIL}, by combining the small ball probability estimates with the Lamperti transformation.

\subsection{Lamperti transformation} \label{sec:Lamperti}
Consider the Lamperti transformation of $X$, defined by
\begin{equation*}\label{Eq:U}
	U(t):= e^{-tH} X(e^t) \quad \text{for all } t \in \mathbb{R}.
\end{equation*}
By the $H$-self-similarity of $\{X(t):\, t\ge0\}$,  $U =\{U(t)\}_{t\in\mathbb R}$ is a  centered stationary Gaussian process.  Following  \cite{TX2007}, we analyze the covariance function and the spectral measure of
$U$, which will be used in the proof of Theorem~\ref{thm CLIL}.

Let $r(t):=\mathbb E[U(0)U(t)]$  denote the covariance function of $U$. A  direct  calculation gives  
\begin{align}\label{eqrt}
	r(t)=&\, e^{-tH}  \int_0^{1 \wedge e^t} s^{2\alpha} \left(\int_s^1 u^{\beta} (u-s)^{\gamma}du \right)  \left(\int_s^{e^t} v^{\beta} (v-s)^{\gamma} dv \right) ds.
\end{align}
Since $U$ is stationary, its covariance function $r$ is even. 
 
\begin{lemma}\label{key1} Suppose that Condition~\eqref{eq constant} holds. Then the covariance function $r$ has the following asymptotic behavior as $t\to\infty$.
\begin{itemize}
    \item[(i)] If $\beta+\gamma<-1$, then
    \[
    \lim_{t\to+\infty} e^{tH}r(t)=c_{4,1}.
    \]
    \item[(ii)] If $\beta+\gamma=-1$, then
    \[
\lim_{t\to+\infty}\frac{1}{t}\,e^{tH}r(t)=c_{4,2}.
    \]
    \item[(iii)] If $\beta+\gamma>-1$, then
    \[
    \lim_{t\to+\infty} e^{t(\alpha+\frac12)}r(t)=c_{4,3}.
    \]
\end{itemize}
Here $c_{4,1}$, $c_{4,2}$, and $c_{4,3}$ are strictly positive constants given explicitly in \eqref{rtcase1}, \eqref{rtcase2}, and \eqref{rtcase3}, respectively. 
\end{lemma}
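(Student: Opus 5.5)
For $t>0$ we have $1\wedge e^t=1$, so by \eqref{eqrt}
\[
e^{tH}r(t)=\int_0^1 s^{2\alpha}g(s)\,G(s,e^t)\,ds,
\]
where
\[
g(s):=\int_s^1 u^\beta(u-s)^\gamma\,du,
\qquad
G(s,T):=\int_s^T v^\beta(v-s)^\gamma\,dv .
\]
The whole $T=e^t$ dependence sits in $G$, and we read the three regimes off the growth of $G(s,T)$ as $T\to\infty$. The plan is to expand $G(s,T)$ for large $T$ uniformly in $s\in[0,1]$, and then pass to the limit by dominated convergence. The weight $s^{2\alpha}g(s)$ is integrable on $[0,1]$: indeed $\alpha>-1/2$, and near $s=0$ we have $g(s)\lesssim 1+s^{\beta+\gamma+1}$, up to a logarithm when $\beta+\gamma=-1$. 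Condition \eqref{eq constant} ensures $2\alpha+\beta+\gamma+1>-1$, and hence integrability.

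\textbf{Case (i), $\beta+\gamma<-1$.} Here $v^\beta(v-s)^\gamma\sim v^{\beta+\gamma}$ is integrable at infinity, so $G(s,T)\uparrow G(s,\infty)$ monotonically. The limit is finite because $\gamma>-1$ handles $v\downarrow s$ and $\beta+\gamma<-1$ handles the tail. Monotone convergence then gives
\[
c_{4,1}=\int_0^1 s^{2\alpha}g(s)G(s,\infty)\,ds\in(0,\infty].
\]
To show this is finite, substitute $v=sw$ to get $G(s,\infty)=s^{\beta+\gamma+1}\int_1^\infty w^\beta(w-1)^\gamma\,dw$. The integrand then behaves like $s^{2\alpha+\beta+\gamma+1}g(s)$ near $0$. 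This is integrable because $g(s)$ is bounded by $C(1+s^{\beta+\gamma+1})$, and $2\alpha+2(\beta+\gamma+1)>-1$ is exactly $H>0$. Positivity is clear.

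\textbf{Case (ii), $\beta+\gamma=-1$.} Split $G(s,T)=\int_s^{2}+\int_2^T$. The first piece is bounded by $C(1+|\log s|)$. In the second piece,
\[
v^\beta(v-s)^\gamma=v^{-1}\bigl(1-s/v\bigr)^\gamma=v^{-1}+O(v^{-2}),
\]
uniformly for $s\le1$ and $v\ge2$, so it equals $\log T+O(1)$. Hence $G(s,T)/\log T\to1$, with $G(s,T)/\log T\le C(1+|\log s|)$ for $T\ge e$. Since $\log T=t$, dominated convergence gives
\[
c_{4,2}=\int_0^1 s^{2\alpha}g(s)\,ds .
\]
The domination uses $s^{2\alpha}\log(1/s)^2$ integrable, which holds because $\alpha>-1/2$.

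\textbf{Case (iii), $\beta+\gamma>-1$.} Now $G(s,T)\sim T^{\beta+\gamma+1}/(\beta+\gamma+1)$. Substituting $v=Tw$ gives
\[
T^{-(\beta+\gamma+1)}G(s,T)=\int_{s/T}^1 w^\beta(w-s/T)^\gamma\,dw\;\longrightarrow\;\frac{1}{\beta+\gamma+1}.
\]
This convergence is uniform and bounded for $s\le1\le T$; this requires a small check when $\beta<0$, where one uses $\beta+\gamma+1>0$ and splits at $w=2s/T$. Since $H-(\beta+\gamma+1)=\alpha+\tfrac12$, we get
\[
e^{t(\alpha+1/2)}r(t)=e^{tH}r(t)\,T^{-(\beta+\gamma+1)}\to c_{4,3}=\frac{1}{\beta+\gamma+1}\int_0^1 s^{2\alpha}g(s)\,ds>0 .
\]
The main obstacle is the uniform domination of $G(s,T)$ near $s=0$ (including the logarithmic case), and verifying that the limiting constants are finite, which ties back to $H>0$ and $\alpha>-1/2$.
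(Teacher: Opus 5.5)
Your proposal is correct and follows essentially the same route as the paper. Both arguments isolate the $e^t$-dependence in the inner integral $G(s,e^t)$ (the paper rescales $v=e^t y$ in every case, as you do in case (iii)), compute pointwise limits in $s$, and pass to the limit by dominated convergence using bounds of the type \eqref{eq:J-estimate}. The differences are cosmetic: you use monotone convergence in case (i), you derive the pointwise limits directly rather than citing Lemma~\ref{lem 4 MSa}, and you leave $c_{4,1},c_{4,2},c_{4,3}$ as integrals, whose closed Beta-function forms follow by Fubini as in \eqref{rtcase1}--\eqref{rtcase3}.
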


\begin{proof}   
For $t>0$, applying the change of variables $v=e^{t}y$ in \eqref{eqrt}, we obtain
	\begin{equation}\label{eqrt1}
		e^{tH} r(t) =  \int_0^1 s^{2\alpha} \left( \int_s^1 u^{\beta}(u-s)^{\gamma}du\right) \left( e^{t(\beta+\gamma+1)} \int_{s e^{-t}}^1 y^{\beta}(y - se^{-t})^{\gamma}dy \right) ds.
	\end{equation} 

    We shall use the following estimate, whose proof is postponed. There exists a constant \(c_{4,4}>0\) such that for all \(0<a<b\),
\begin{equation}
\int_{a}^{b} y^\beta (y-a)^\gamma\,dy
\leq c_{4,4}
\begin{cases}
a^{\beta+\gamma+1}, & \beta+\gamma<-1,\\[2mm]
1+\log(b/a), & \beta+\gamma=-1,\\[2mm]
b^{\beta+\gamma+1}, & \beta+\gamma>-1.
\end{cases}
\label{eq:J-estimate}
\end{equation}
Applying \eqref{eq:J-estimate} together with Lemma~\ref{lem 4 MSa}, we now distinguish three cases.

\noindent\text{Case 1:}    $\beta+\gamma<-1$.  Lemma~\ref{lem 4 MSa} implies that, for every fixed $s\in(0,1]$,
	\[
	\lim_{t\to\infty} e^{t(\beta+\gamma+1)} \int_{s e^{-t}}^1 y^{\beta}(y - se^{-t})^{\gamma}dy
	= c_{4,5} s^{\beta+\gamma+1}, 
	\]
	where $c_{4,5}= B(\gamma+1, -\beta-\gamma-1)$.
    
  Moreover, by \eqref{eq:J-estimate}, we have
    \[
e^{t(\beta+\gamma+1)}\int_{s e^{-t}}^1 y^{\beta}(y - se^{-t})^{\gamma}dy \le c_{4,4} s^{\beta+\gamma+1}
\quad\text{and}\quad
\int_s^1 u^{\beta}(u-s)^{\gamma}du \le c_{4,4} s^{\beta+\gamma+1}.
\]
	Consequently,  the integrand in \eqref{eqrt1} is bounded by
$c_{4,4}^2\,s^{2\alpha+2\beta+2\gamma+2}.$
Since
$2\alpha+2\beta+2\gamma+2>-1
$, this dominating function is integrable over $(0,1)$. Hence, by the dominated convergence theorem and Fubini's theorem, we obtain
	\begin{equation}\label{rtcase1}
		\begin{split}
			\lim_{t\to+\infty} e^{tH} r(t)
			&=\int_0^1 s^{2\alpha} \left( \int_s^1 u^{\beta}(u-s)^{\gamma}du\right) \left( 	\lim_{t\to+\infty} e^{t(\beta+\gamma+1)}\int_{s e^{-t}}^1 y^{\beta}(y - se^{-t})^{\gamma}dy \right) ds\\
			&= c_{4,5} \int_0^1 s^{2\alpha+\beta+\gamma+1} \left( \int_s^1 u^{\beta}(u-s)^{\gamma}du\right) ds\\
			&= c_{4,5} \int_0^1 u^\beta \left( \int_0^u s^{2\alpha+\beta+\gamma+1} (u-s)^\gamma ds \right) du \\
			&=c_{4,5} \int_0^1 u^{2\alpha+2\beta+2\gamma+2} \left( \int_0^1 y^{2\alpha+\beta+\gamma+1}(1-y)^\gamma dy \right) du \\
			&= c_{4,5}\frac{1}{2\alpha+2\beta+2\gamma+3} B(2\alpha+\beta+\gamma+2, \gamma+1).
		\end{split}
	\end{equation} 
    \noindent\text{Case 2: $\beta+\gamma=-1$.} 
    For every fixed $s\in(0,1]$, Lemma 6.1 gives
$$
\lim_{t\to\infty}\frac1t\int_{s e^{-t}}^1 y^{\beta}(y - se^{-t})^{\gamma}dy
=1.
$$
For $t\geq1$, by \eqref{eq:J-estimate}, we have
$$
\frac1t\int_{s e^{-t}}^1 y^{\beta}(y - se^{-t})^{\gamma}dy
\leq
c_{4,4}\left(2+\log s^{-1}\right)  \quad\text{and}\quad \int_s^1 u^{\beta}(u-s)^{\gamma}du \le c_{4,4} \left(1+\log s^{-1}\right).
$$
Hence, the integrand in \eqref{eqrt1} is bounded by
$
c_{4,4}^2s^{2\alpha}\left(1+\log s^{-1}\right)\left(2+\log s^{-1}\right),
$
which is integrable over $(0,1)$ because $\alpha>-1/2$. Therefore, by the dominated convergence theorem and Fubini's theorem, we obtain
\begin{equation}
\begin{aligned}
\lim_{t\to\infty}\frac1t\,e^{tH}r(t)
&= \int_0^1 s^{2\alpha} \left( \int_s^1 u^{\beta}(u-s)^{\gamma}du\right) \left( 	\lim_{t\to+\infty} \frac1t \int_{s e^{-t}}^1 y^{\beta}(y - se^{-t})^{\gamma}dy \right) ds\\
&= \int_0^1 s^{2\alpha}
\Bigl(\int_s^1 u^\beta(u-s)^\gamma du\Bigr)ds\\
&= \int_0^1 u^\beta \left( \int_0^u s^{2\alpha} (u-s)^\gamma ds \right) du \\
&= \int_0^1 u^{2\alpha+\beta+\gamma+1} du \cdot B(2\alpha+1,\gamma+1) \\
&= \frac{B(2\alpha+1,\gamma+1)}{2\alpha+\beta+\gamma+2} = \frac{B(2\alpha+1,\gamma+1)}{2\alpha+1}.
\end{aligned}
\label{rtcase2}
\end{equation}
\text{Case 3: $\beta+\gamma>-1$.}
Lemma~\ref{lem 4 MSa} implies that, for every fixed $s\in(0,1]$,
	\[
	\lim_{t\to\infty}  \int_{s e^{-t}}^1 y^{\beta}(y - se^{-t})^{\gamma}dy
	= \frac{1}{\beta+\gamma+1}. 
	\] 
  Moreover, by \eqref{eq:J-estimate}, we have
\[
\int_{s e^{-t}}^1 y^{\beta}(y - se^{-t})^{\gamma}dy \le c_{4,4}
\quad\text{and}\quad
\int_s^1 u^{\beta}(u-s)^{\gamma}du \le c_{4,4}.
\]
	Consequently,  the integrand in \eqref{eqrt1} is bounded by
$c_{4,4}^2\,s^{2\alpha},$
which is integrable over $(0,1)$.
Thus,
\begin{equation}
\begin{aligned}
\lim_{t\to\infty}e^{t(\alpha+\frac12)}r(t)
&=\lim_{t\to\infty} e^{-t(\beta+\gamma+1)}\,e^{tH}r(t)\\
			&= \int_0^1 s^{2\alpha} \left( \int_s^1 u^{\beta}(u-s)^{\gamma}du\right) \left( 	\lim_{t\to+\infty} \int_{s e^{-t}}^1 y^{\beta}(y- se^{-t})^{\gamma}dy \right) ds\\
&=
\frac{1}{\beta+\gamma+1}
\int_0^1 s^{2\alpha}
\left(\int_s^1 u^\beta (u-s)^\gamma\,du\right)ds\\
&=
\frac{B(2\alpha+1,\gamma+1)}
{(\beta+\gamma+1)(2\alpha+\beta+\gamma+2)}.
\end{aligned}
\label{rtcase3}
\end{equation}
Combining \eqref{rtcase1}, \eqref{rtcase2}, and \eqref{rtcase3} yields Lemma~\ref{key1}.
 We next prove \eqref{eq:J-estimate}.   
 
 For $\beta+\gamma<-1$, using the change of variables $y=a/(1-z)$, we obtain
\begin{align*}\label{eq:kernel_beta_integral}
\int_a^b y^\beta (y-a)^\gamma dy
= a^{\beta+\gamma+1}
\int_0^{1-a/b} z^\gamma (1-z)^{-\beta-\gamma-2}dz
\le B(\gamma+1,-\beta-\gamma-1) \, a^{\beta+\gamma+1}.
\end{align*}

For $\beta+\gamma=-1$, we have $\beta<0$. The change of variables $y=a(1+z)$ yields
\[
\int_a^b y^\beta(y-a)^\gamma\,dy
=
\int_0^{(b-a)/a}(1+z)^\beta z^{-1-\beta}\,dz.
\]
If $(b-a)/a\le1$, then
\[
\int_0^{\frac{b-a}{a}} (1+z)^\beta z^{-1-\beta}\,dz
\le
\int_0^1 z^{-1-\beta}\,dz
=-\frac1\beta.
\]
If $(b-a)/a>1$, then, since
$(1+z)^\beta\le z^\beta$ for $z\ge1$,
\[
\begin{aligned}
\int_0^{\frac{b-a}{a}} (1+z)^\beta z^{-1-\beta}\,dz
&\le
\int_0^1 z^{-1-\beta}\,dz
+\int_1^{\frac{b-a}{a}}z^{-1}\,dz \\
&\le
-\frac1\beta+\log\frac ba.
\end{aligned}
\]
Thus, there exists $c_{4,6}>0$ such that, for all $0<a\le b$,
\[
\int_a^b y^\beta (y-a)^\gamma\,dy
\le c_{4,6}\left(1+\log\frac{b}{a}\right).
\]


For $\beta+\gamma>-1$, we have 
\[
\begin{aligned}
\int_a^b y^\beta (y-a)^\gamma dy
= \int_0^{b-a} (a+z)^\beta z^\gamma dz 
&\le
\begin{cases}
\displaystyle b^\beta \int_0^{b-a} z^\gamma dz, & \beta>0,\\[6pt]
\displaystyle \int_0^{b-a} z^{\beta+\gamma} dz, & \beta\le 0,
\end{cases} \\
&\le \max\left\{ (\beta+\gamma+1)^{-1}, (\gamma+1)^{-1} \right\} b^{\beta+\gamma+1}.
\end{aligned}
\]
    The proof is complete.
\end{proof}

By Bochner's theorem (see \cite[p.~220]{Lo}), there exists a finite measure $F$ on $\mathbb R$ such that
\begin{equation}\label{eq:r-spectral}
r(t)=\int_{\mathbb R}e^{it\omega}\,F(d\omega),
\qquad t\in\mathbb R.
\end{equation}
The measure $F$ is the spectral measure of $U$. Since $r\in L^1(\mathbb R)$, $F$ is absolutely continuous with respect to Lebesgue measure and has a continuous density $f$. Moreover, since $r$ is even, the Fourier inversion formula gives
$$
f(\omega)
=
\frac{1}{2\pi}\int_{\mathbb R}e^{-it\omega}r(t)\,dt
=
\frac{1}{\pi}\int_0^\infty r(t)\cos(t\omega)\,dt,
\qquad \omega\in\mathbb R.
$$

It is well known that the stationary Gaussian process $U$ admits  the stochastic integral representation:
\begin{align}\label{eq U int}
	U(t)=\int_{\mathbb R} e^{i\omega t}\,\widehat W(d\omega)    \ \ \ \ \text{for all } t\in \mathbb R,
\end{align}
where  $\widehat W$  is a complex Gaussian measure with control measure $F$. Consequently, for all $s, t \in \mathbb R$,
\begin{equation}\label{Eq:Uvar}
	\begin{split}
		\mathbb E \big[ \big(U (s) -U (t)\big)^2\big] &= 2 \big( r(0) - r(t-s) \big)\\
		&=2 \int_{\mathbb R} \big[1 - \cos \big((s-t) \omega\big)\big]\, f(\omega) d \omega.
	\end{split}
\end{equation}

The following lemma provides two-sided bounds for  $\mathbb E \left[ \left(U (s) -U (t)\right)^2\right].$

\begin{lemma}\label{lem r}  
	Assume that  Condition \eqref{eq constant cond} holds. Then,  for every  $T> 0$, there exist positive constants $\varepsilon_0$, $c_{4,7}$, and $c_{4,8}$ such that,  for all $ s, t \in [0, T]$ satisfying  $|s-t| \le \varepsilon_0$,
	\begin{equation}\label{eq r}
		\begin{split}
			c_{4,7}|s-t|^{2 \gamma+3}  \le\, \mathbb E \left[ \left(U (s) -U (t)\right)^2\right]  \le  \,
			c_{4,8}|s-t|^{2\gamma+3}.
		\end{split}
	\end{equation} 
\end{lemma}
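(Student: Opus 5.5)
The idea is to derive the bound directly from Lemma~\ref{lem mom}, exploiting that the Lamperti transform is a smooth deterministic time change composed with a smooth rescaling. By stationarity, $\mathbb E[(U(s)-U(t))^2]$ depends only on $|s-t|$, so we may take $t=0$ and $s=h\in(0,\varepsilon_0]$. We write
\[
U(h)-U(0)=e^{-hH}\bigl(X(e^h)-X(1)\bigr)+\bigl(e^{-hH}-1\bigr)X(1).
\]
The whole argument rests on this decomposition, which splits the increment into a "rough" term and a "smooth" term.

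\textbf{Rough term.} Lemma~\ref{lem mom}, applied with the pair $(e^h,1)$, gives
\[
c_{2,1}e^{h(2\alpha+2\beta)}(e^h-1)^{2\gamma+3}\le \mathbb E\bigl[(X(e^h)-X(1))^2\bigr]\le c_{2,2}\max\{e^{h(2\alpha+2\beta)},1\}(e^h-1)^{2\gamma+3}.
\]
For $h\le\varepsilon_0\le 1$ we have $h\le e^h-1\le e\,h$ and the exponential factors are bounded above and below. Hence the first term has $L^2$ norm comparable to $h^{\gamma+3/2}$, with constants depending only on the parameters.

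\textbf{Smooth term.} This term has $L^2$ norm $|e^{-hH}-1|\,(\mathbb E X(1)^2)^{1/2}\le C h$.

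\textbf{Combining.} Minkowski's inequality gives
\[
\|U(h)-U(0)\|_2\ \ge\ c\,h^{\gamma+3/2}-Ch
\quad\text{and}\quad
\|U(h)-U(0)\|_2\ \le\ C'h^{\gamma+3/2}+Ch.
\]
The key point is that $\gamma\in(-1,-\tfrac12)$ means $\gamma+\tfrac32\in(\tfrac12,1)$, so $h=o(h^{\gamma+3/2})$ as $h\to0$. We therefore choose $\varepsilon_0$ small enough that $Ch\le \tfrac c2 h^{\gamma+3/2}$ for $h\le\varepsilon_0$. Squaring then yields \eqref{eq r}, and the constants do not even depend on $T$, since stationarity reduces everything to the increment at $1$.

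\textbf{Main obstacle.} The only delicate point is this absorption step, which uses strictly that $\gamma<-\tfrac12$. At $\gamma=-\tfrac12$ both terms would be of order $h$, and cancellation could not be ruled out this way; this case is excluded by Condition~\eqref{eq constant cond}.

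\textbf{Alternative route.} One could instead use the spectral representation \eqref{Eq:Uvar}, deriving $f(\omega)\asymp|\omega|^{-(2\gamma+4)}$ as $|\omega|\to\infty$ from the singular behavior of $r$ near $0$. That would require a Tauberian argument, which the direct approach via Lemma~\ref{lem mom} avoids.
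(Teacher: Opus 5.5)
Your proposal is correct and follows essentially the same route as the paper: you use stationarity to reduce to the increment at $0$, Lemma~\ref{lem mom} to control $X(e^h)-X(1)$ at order $h^{\gamma+3/2}$, and then absorb the smooth Lamperti correction of order $h$ because $2\gamma+3<2$. The differences are cosmetic. You attach $e^{-hH}$ to the increment, so your remainder is $(e^{-hH}-1)X(1)$ rather than the paper's $(e^{-Ht}-1)X(e^t)$, and you use Minkowski's inequality in place of $(x+y)^2\le 2x^2+2y^2$ and $(x+y)^2\ge \tfrac12 x^2-y^2$.
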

\begin{proof}  Following the argument of \cite[Lemma~4.1]{WX2022a} and using the stationarity of $U$, it suffices to estimate
$$\mathbb E \left[ \left(U (t) -U (0)\right)^2\right], \qquad t>0.$$
 
For sufficiently small $t>0$, it follows from Lemma \ref{lem mom} and the inequality
$(x+y)^2\leq 2x^2+2y^2$ that there exist positive constants $c_{4,9}$, $c_{4,10}$, and $c_{4,11}$ such that 
\begin{equation*}\label{Eq:Umin}
\begin{aligned}
\mathbb{E}\left[\left(U(t)-U(0)\right)^2\right]
&=
\mathbb{E}\left[
\left(
X(e^t)-X(1)
+\bigl(e^{-Ht}-1\bigr)X(e^t)
\right)^2
\right] \\
&\leq
2\mathbb{E}\left[\left(X(e^t)-X(1)\right)^2\right]
+
2\bigl(e^{-Ht}-1\bigr)^2
\mathbb{E}\left[X(e^t)^2\right] \\
&\leq
2c_{2,2}
\max\left\{e^{(2\alpha+2\beta)t},1\right\}
\left(e^t-1\right)^{2\gamma+3}
+
2\left(e^{Ht}-1\right)^2
\mathbb{E}\left[X(1)^2\right] \\
&\leq
c_{4,9}t^{2\gamma+3}
+
c_{4,10}t^2 \\
&\leq
c_{4,11}t^{2\gamma+3}.
\end{aligned}
\end{equation*}
Here, we used Lemma \ref{lem mom}, 
$H$-self-similarity,  $
e^t-1\sim t$,
and  $e^{Ht}-1\sim Ht$
 as $t\downarrow0$,
and   $2\gamma+3\in(1,2)$.

Similarly, using  $(x+y)^2\ge \frac12 x^2-y^2$,   we obtain a constant $c_{4,12}>0$ such that for all   sufficiently small $t>0$, 
\begin{align*}
\mathbb{E}\left[\left(U(t)-U(0)\right)^2\right]
\ge \frac12
\mathbb{E}\left[\left(X(e^t)-X(1)\right)^2\right]
-\left(e^{-Ht}-1\right)^2
\mathbb{E}\left[X(e^t)^2\right]
\ge c_{4,12}t^{2\gamma+3}.
\end{align*} 

The proof is complete.
\end{proof}

\begin{lemma}\label{lem spectral}  
	Assume that Condition \eqref{eq constant cond} holds. Then there exist  constants $u_0>0$ and 	$c_{4,13}, c_{4,14}>0$ such that, for every $u > u_0$,
	\begin{align}\label{Eq: sp1}
		\int_{|\omega|<u} \omega^2 f(\omega)d\omega &\le c_{4,13} u^{-1-2\gamma}, \\
		\label{Eq: sp2}
		\int_{|\omega|\ge u} f(\omega)d\omega &\le c_{4,14} u^{-(2\gamma+3)}.
	\end{align}
\end{lemma}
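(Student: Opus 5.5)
Both spectral bounds will be derived from the two-sided variogram estimate of Lemma~\ref{lem r}, together with the representation \eqref{Eq:Uvar}. The device is the standard one of Tudor and Xiao \cite{TX2007} (see also \cite[Lemma~4.2]{WX2022a}): for $0<h\le\varepsilon_0$, Lemma~\ref{lem r} and \eqref{Eq:Uvar} give
\begin{equation*}
2\int_{\mathbb R}\bigl[1-\cos(h\omega)\bigr]f(\omega)\,d\omega
=\mathbb E\bigl[(U(h)-U(0))^2\bigr]\le c_{4,8}\,h^{2\gamma+3}.
\end{equation*}
We will only need this upper bound. The integrand is nonnegative, so we may discard any part of the domain.

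\textbf{Step 1: the bound \eqref{Eq: sp1}.} Put $u_0:=1/\varepsilon_0$, let $u>u_0$, and take $h=1/u\le\varepsilon_0$. For $|\omega|<u$ we have $|h\omega|<1$. On $[-1,1]$ the elementary inequality $1-\cos x\ge c\,x^2$ holds with $c=(1-\cos 1)>0$, since $(1-\cos x)/x^2$ is decreasing on $[0,\pi]$. Therefore
\begin{equation*}
2c\,u^{-2}\int_{|\omega|<u}\omega^2 f(\omega)\,d\omega\le c_{4,8}\,u^{-(2\gamma+3)}.
\end{equation*}
This gives \eqref{Eq: sp1} with $c_{4,13}=c_{4,8}/(2c)$.

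\textbf{Step 2: the bound \eqref{Eq: sp2}.} We average the variogram inequality over $h\in[0,1/u]$. Fubini's theorem applies because $f\ge0$ is integrable. We get
\begin{equation*}
2\int_{\mathbb R}\Bigl(1-\frac{\sin(\omega/u)}{\omega/u}\Bigr)f(\omega)\,d\omega
= 2u\int_0^{1/u}\!\!\int_{\mathbb R}\bigl[1-\cos(h\omega)\bigr]f(\omega)\,d\omega\,dh
\le c_{4,8}\,u\int_0^{1/u}h^{2\gamma+3}\,dh .
\end{equation*}
The right-hand side equals $c_{4,8}(2\gamma+4)^{-1}u^{-(2\gamma+3)}$. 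For $|\omega|\ge u$ we have $|\omega/u|\ge1$, hence $1-\sin(x)/x\ge 1-\sin 1>0$ for $|x|\ge1$. Restricting the integral to $|\omega|\ge u$ then yields \eqref{Eq: sp2}.

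\textbf{Main obstacle.} There is essentially no hard step. The one point to check is that the variogram bound is available for every $h\in(0,1/u]$. Lemma~\ref{lem r} gives it for $|s-t|\le\varepsilon_0$, so the choice $u_0=1/\varepsilon_0$ takes care of this, with stationarity reducing any pair $s,t$ to the case $U(h)-U(0)$. We must also make sure $f$ exists as a density. This was noted just before the lemma, via $r\in L^1$, which follows from Lemma~\ref{key1}. Even without a density, the same argument runs verbatim with $F(d\omega)$ in place of $f(\omega)\,d\omega$.
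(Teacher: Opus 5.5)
Your argument is correct and is essentially the paper's proof. The paper cites Lo\`eve's truncation inequalities, then inserts \eqref{Eq:Uvar} and the upper bound in \eqref{eq r} with $1/u_0\le\varepsilon_0$. You instead prove those two truncation inequalities directly from $1-\cos x\ge(1-\cos 1)x^2$ on $[-1,1]$ and $1-\frac{\sin x}{x}\ge 1-\sin 1$ for $|x|\ge 1$, which also makes the constants explicit.
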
 

\begin{proof} 
	By the truncation inequalities of Lo\`eve \cite [p. 209]{Lo}, there exists a constant $K>0$ such that, for all $u>0$,
	\begin{equation*}
	\begin{aligned}
		\int_{|\omega| < u} \omega^2 f(\omega)d\omega
		&\le K u^2 \int_{\mathbb R} (1 - \cos(\omega/u)) f(\omega)d\omega, \\
		\int_{|\omega| \ge u} f(\omega)d\omega
		&\le K u \int_0^{1/u} dv \int_{\mathbb R} (1 - \cos(v\omega)) f(\omega)d\omega.
	\end{aligned}
	\end{equation*}
  Choose $u_0>0$ sufficiently large  so that $$1/u_0 \le \varepsilon_0,$$
  where $\varepsilon_0$ is the constant appearing in Lemma~\ref{lem r}.
  Then, for every $u\ge u_0$,  \eqref{Eq:Uvar} and \eqref{eq r} yield 
	\begin{align*}
		\int_{|\omega| < u} \omega^2 f(\omega)d\omega
		\le \frac{K}{2} u^2 \mathbb E[(U(1/u)-U(0))^2] 
		\le \frac{K c_{4,8}}{2} u^{-1-2\gamma},
	\end{align*} 
	and
	\begin{align*}
		\int_{|\omega| \ge u} f(\omega)d\omega
		&\le \frac{K}{2} u \int_0^{1/u} \mathbb E[(U(v)-U(0))^2] dv \\
		&\le \frac{K c_{4,8}}{2} u \int_0^{1/u} v^{2\gamma+3}dv= \frac{K c_{4,8}}{2(2\gamma+4)} u^{-(2\gamma+3)}.
	\end{align*}
	The proof is complete. 
\end{proof}

\subsection{Proof of Theorem~\ref{thm CLIL} }

   We combine the arguments of Tudor and Xiao  \cite[Theorem~3.1]{TX2007} and Wang and Xiao \cite[Proposition~6.1]{WX2022a} with the small ball estimates  and the Lamperti transformation. 
  
The following zero-one law shows that the left-hand side of \eqref{eq CLIL1} 
is almost surely a constant, which  may be $0$ or $\infty$.

\begin{lemma}\label{lem:01lawZ} 
	Assume that  Condition \eqref{eq constant cond} holds.  Then there exists a constant $\kappa_5'\in[0,\infty]$  such that,  for every  fixed $t > 0$,
	\begin{equation}\label{Eq:Chung01law} 
		\liminf_{r\rightarrow0+}\sup_{ |s|\le r} \frac{ (\log \log r^{-1})^{\gamma+\frac32}} {r^{\gamma+\frac32}}|X(t+s)-X(t)|
		=\kappa_5' t^{\alpha+\beta}, \ \  \ \ \ \text{a.s.}
	\end{equation}
\end{lemma}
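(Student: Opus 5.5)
Our plan has two parts. First we show that, for each fixed $t>0$, the liminf is almost surely equal to a deterministic constant in $[0,\infty]$. Then we use the Lamperti transformation and self-similarity to show that this constant factorizes as $\kappa_5' t^{\alpha+\beta}$ with $\kappa_5'$ independent of $t$.

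\textbf{Step 1 (reduction to $U$).} Write $t=e^{\tau}$, so that $X(e^{\tau+v})=e^{H(\tau+v)}U(\tau+v)$. For $|s|\le r$ put $e^{\tau+v}=t+s$, i.e.\ $v=\log(1+s/t)$, so $v=s/t+O(r^2)$. Then
\[
X(t+s)-X(t)=e^{H\tau}\bigl(U(\tau+v)-U(\tau)\bigr)+\bigl(e^{H(\tau+v)}-e^{H\tau}\bigr)U(\tau+v).
\]
The second term is $O(r)$ almost surely, uniformly in $|s|\le r$, because $U$ is continuous. Since $\gamma+\frac32<1$, we have
\[
r\,(\log\log r^{-1})^{\gamma+\frac32}/r^{\gamma+\frac32}\to0,
\]
so this term does not contribute to the liminf. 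The map $s\mapsto v$ sends $[-r,r]$ onto an interval squeezed between $[-r/t-Cr^2,\,r/t-Cr^2]$ and its analogue with $+Cr^2$. Substituting $\rho=r/t$ and noting $\log\log\rho^{-1}\sim\log\log r^{-1}$, we get that the liminf in \eqref{Eq:Chung01law} equals
\[
t^{H}\,t^{-(\gamma+\frac32)}\,\liminf_{\rho\to0}\sup_{|v|\le\rho}\frac{(\log\log\rho^{-1})^{\gamma+\frac32}}{\rho^{\gamma+\frac32}}|U(\tau+v)-U(\tau)|,
\]
provided we can absorb the $O(r^2)$ endpoint perturbation. Since $H-(\gamma+\frac32)=\alpha+\beta$, the prefactor is exactly $t^{\alpha+\beta}$.

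To absorb the perturbation, note that a shift of size $\rho^2$ changes the relevant increments by at most the modulus of continuity of $U$ at scale $\rho^2$. Lemma~\ref{lem r} together with standard Gaussian bounds makes this $O\bigl(\rho^{2(\gamma+\frac32)}\sqrt{\log\rho^{-1}}\bigr)$, which is $o\bigl(\rho^{\gamma+\frac32}(\log\log\rho^{-1})^{-(\gamma+\frac32)}\bigr)$. Alternatively, we can sandwich the interval between $\rho(1\pm\delta)$ and let $\delta\downarrow0$, which works because the normalizing function is regularly varying.

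\textbf{Step 2 (independence of $\tau$).} By stationarity of $U$, the law of the process $\{U(\tau+v)-U(\tau)\}_v$ does not depend on $\tau$. Hence the distribution of the random $\liminf$ for $U$ at $\tau$ is the same for every $\tau$.

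\textbf{Step 3 (zero-one law).} We show the $U$-liminf is almost surely constant. Using the spectral representation \eqref{eq U int}, take $\tau=0$ and split $U=U_n^{\rm low}+U_n^{\rm high}$ according to $|\omega|<n$ and $|\omega|\ge n$. These two components are independent Gaussian processes.

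\emph{Low-frequency part.} By \eqref{Eq: sp1}, $U_n^{\rm low}$ has mean-square derivative with $\mathbb E|U_n^{\rm low}(v)-U_n^{\rm low}(0)|^2\le c\,n^{-1-2\gamma}v^2$. Its paths are $C^1$, so its increments are $O(\rho)$. As in Step 1 they are negligible, so the liminf is a functional of $U_n^{\rm high}$ alone.

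\emph{Independence structure.} The high-frequency parts for different $n$ are not nested-independent in the right direction. Instead we follow Tudor--Xiao: the event $\{\liminf\le c\}$ is, up to null sets, measurable with respect to $\sigma\bigl(\widehat W(A):A\subset\{|\omega|\ge n\}\bigr)$ for every $n$. The intersection over $n$ of these $\sigma$-fields is trivial by the Kolmogorov zero-one law, since $\widehat W$ restricted to disjoint frequency shells gives independent components.

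\emph{Main obstacle.} The difficulty is making the "negligible modulo null sets" statement rigorous: the low-frequency remainder must be $o$ of the normalizer almost surely for each fixed $n$. This follows because the low part is almost surely Lipschitz near $0$. We will therefore present the shell decomposition carefully, with \eqref{Eq: sp1} guaranteeing differentiability of each truncated part. \eqref{Eq: sp2} is not needed for the zero-one law itself and is reserved for the later positivity and finiteness arguments.
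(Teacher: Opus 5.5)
Your proposal is correct and follows essentially the paper's argument: the same Lamperti decomposition with the $O(r)$ term discarded because $\gamma+\frac32<1$, an interval sandwich for the image of $s\mapsto\log(1+s/t)$ (your $\rho(1\pm\delta)$ version plays the role of the paper's $a_r,b_r$), stationarity of $U$ for independence from the base point, and Kolmogorov's zero-one law via independent frequency shells of $\widehat W$ whose low-frequency parts are Lipschitz and hence negligible. Phrasing the event as measurable, up to null sets, with respect to $\sigma(\widehat W(A): A\subset\{|\omega|\ge n\})$ for every $n$ is just a slightly more careful restatement of the paper's ``unchanged by finitely many $U_n$'' tail-$\sigma$-field step.
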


\begin{proof} We begin with the stationary Gaussian process $U = \{U(s)\}_{s \in \mathbb R}$ defined by the spectral representation  \eqref{eq U int}. 
Following  the proof of Lemma 2.1 in \cite{WSX},   for every $n \ge 1$, define
	\[
	U_n(s) = \int_{n-1 \le |\omega| < n} e^{i\omega s}\widehat W(d\omega).
	\]
	Since the sets $\{n-1\le |\omega|<n\}$ are pairwise disjoint and $\widehat {W}$ is  independently scattered, the Gaussian processes $$U_n = \{U_n(s)\}_{s \in \mathbb R}, \quad n \ge 1$$ are mutually independent. Moreover,  
	\[
	U(s) = \sum_{n=1}^\infty U_n(s), 
	\]
	where the series  converges  uniformly almost surely    on every compact interval of $\mathbb R$. As shown in \cite[Lemma 2.1]{WSX}, the sample paths of each $U_n$ are almost surely Lipschitz continuous on every compact interval.  Consequently, for every fixed $N\in\mathbb{N}$,
$$
\sup_{|s|\leq r}
\left|
\sum_{n=1}^{N}\bigl(U_n(x+s)-U_n(x)\bigr)
\right|
=O(r)
\qquad\text{as }r\downarrow0,
\quad\text{a.s.}
$$
       Since $\gamma+3/2 \in (1/2,1)$,  for every $x\in\mathbb R$,
	\[
	\liminf_{r\to0+}\sup_{|s|\le r} 
	\frac{(\log\log r^{-1})^{\gamma+\frac32}}{r^{\gamma+\frac32}}\left|\sum_{n=1}^N (U_n(x+s)-U_n(x))\right|=0.
	\]
Consequently,  for every $x \in\mathbb R $ and $c\ge 0$, the event 
	$$E_c(x) =\bigg\{ \liminf_{r\rightarrow0+}\sup_{|s|\le r}  \frac{ (\log \log r^{-1})^{\gamma+\frac32} } {r^{\gamma+\frac32}} |U(x+s)-U(x)|\le c\bigg\}$$  
    is  unchanged by the addition or removal of any finite number of the processes $U_n$. Hence, 
 $E_c(x)$ belongs to   the tail $\sigma$-field generated by the independent sequence $\{U_n:\, n \ge 1\}$.  Kolmogorov's zero-one law therefore implies  that 
 \[
\mathbb{P}\{ E_c(x) \} \in \{0, 1\}.
\]

Define
\[
\kappa_5'
:=\inf\left\{
c\geq0:\mathbb{P}\{ E_c(x) \}=1
\right\},
\]
with the convention that $\inf\varnothing=\infty$. Since the events $E_c(x)$ are increasing in $c$, it follows that $\kappa_5'\in[0,\infty]$ and
\begin{equation}\label{01U}
\liminf_{r\to0+}\sup_{|s|\leq r}
\frac{
\displaystyle \bigl(\log\log r^{-1}\bigr)^{\gamma+\frac32} 
}{
\displaystyle r^{\gamma+\frac32}}|U(x+s)-U(x)|
= \kappa_5',
\qquad \text{a.s.}
\end{equation}
By  stationarity, the distribution  of the liminf in \eqref{01U} does not depend on  $x$; hence neither does  $\kappa_5'$. 

	 We now transfer   \eqref{01U} through the Lamperti transformation.  Fix $t>0$. For  $0 < r < t$  and $|s|\le r$, we write 
	\begin{equation}\label{01U2}
		\begin{split}
			&X(t+s) - X(t) \\
			= &\, \left[(t+s)^H - t^H\right] U(\log (t+s)) + t^H \left[U(\log (t+s)) - U(\log t)\right]\\
			 = &\, \left[(t+s)^H - t^H\right] U(\log (t+s)) + t^H \left[U\left(\log t + \log ( 1 +  s/t) \right) - U(\log t)\right].
		\end{split}
	\end{equation} 
  Since $U$ has continuous sample paths and is therefore locally bounded almost surely, the first term in \eqref{01U2} is $O(r)$ almost surely and is negligible under the normalization in \eqref{Eq:Chung01law}.
       
  For the second term, set $x:=\log t$ and define
$$
a_r:=\log\left(1+\frac rt\right),
\qquad
b_r:=-\log\left(1-\frac rt\right).
$$
Since $a_r\leq b_r$, we have
$$
[-a_r,a_r]
\subseteq
[-b_r,a_r]
\subseteq
[-b_r,b_r].
$$
Consequently,
\begin{equation}\label{eq:interval-sandwich}
\begin{split}
\sup_{|u|\leq a_r}|U(x+u)-U(x)|
\leq &\, 
\sup_{|s|\leq r}
\left|U\left(x+\log\left(1+\frac{s}{t}\right)\right)-U(x)\right|\\
\leq &\, 
\sup_{|u|\leq b_r}|U(x+u)-U(x)|.
 \end{split}
\end{equation}
Moreover,
$$
\frac{a_r}{r}\longrightarrow\frac1t,
\qquad
\frac{b_r}{r}\longrightarrow\frac1t,
$$
and
$$
\frac{\log\log a_r^{-1}}{\log\log r^{-1}}
\longrightarrow1,
\qquad
\frac{\log\log b_r^{-1}}{\log\log r^{-1}}
\longrightarrow1.
$$
Applying \eqref{01U} to the lower and upper bounds in \eqref{eq:interval-sandwich}, and using the preceding asymptotic relations, we obtain
$$
\begin{aligned}
&\liminf_{r\to0^+}
\frac{(\log\log r^{-1})^{\gamma+\frac32}}{r^{\gamma+\frac32}}
\sup_{|s|\leq r}
\left|U\left(x+\log\left(1+\frac{s}{t}\right)\right)-U(x)\right|=\kappa_5' t^{-(\gamma+\frac32)}
\qquad\text{a.s.}
\end{aligned}
$$
Multiplying by $t^H$ and using
$
H-\gamma-\frac32=\alpha+\beta,
$
we conclude from \eqref{01U2} that
$$
\liminf_{r\to0+}\sup_{|s|\leq r}
\frac{
\displaystyle
\bigl(\log\log r^{-1}\bigr)^{\gamma+\frac32} 
}{
\displaystyle
r^{\gamma+\frac32}}|X(t+s)-X(t)|
= \kappa_5't^{\alpha+\beta},
\qquad\text{a.s.}
$$
This proves \eqref{Eq:Chung01law} and completes the proof.
\end{proof}

It follows from Lemma \ref{lem:01lawZ} that Theorem \ref{thm CLIL} will be established if we show $\kappa_5' \in (0, \infty)$.

\begin{proof}[\bf{Proof of Theorem \ref{thm CLIL}}] 
Since the constant \(\kappa_5'\) in Lemma \ref{lem:01lawZ} is independent of \(t\), it suffices to assume \(t>1\).  We  first establish  the lower bound.
	For each integer $n \ge 1$, set $r_n = e^{-n}$.  Fix a constant  $\delta\in (0, \kappa_4)$ and  define the event
	\[
	A_n = \bigg\{ \sup_{|s|\le r_n} \big|X(t+s)-X(t)\big| \le
	\delta^{\gamma+\frac32}\, t^{\alpha+\beta}\, r_n^{\gamma+\frac32}(\log \log (r^{-1}_n))^{-\gamma+\frac32}\bigg\}.
	\]
    By 	Theorem \ref{lem smallt-rt+r},  for all sufficiently large $n$,
	\begin{equation*}\label{Eq:An1}
		\mathbb P\{A_n\} \le \exp\bigg(- \frac{\kappa_4 } {\delta} \log n \bigg)
		= n^{- \frac{\kappa_4 }{\delta}}.
	\end{equation*}
	Since $\sum_{n=1}^{\infty} \mathbb P\{A_n\} < \infty$, the Borel-Cantelli lemma
	implies
	\begin{equation}\label{Eq:LIL-lb1}
		\liminf_{n\to \infty} \sup_{|s|\le r_n}\frac{(\log \log r^{-1}_n)^{\gamma+\frac32} } {r_n^{\gamma+\frac32}}\big|X(t+s)-X(t)\big|
		\ge  \delta^{\gamma+\frac32}\, t^{\alpha+\beta} \qquad \hbox{ a.s.}
	\end{equation}
	It follows from \eqref{Eq:LIL-lb1} and a standard monotonicity argument that
	\begin{equation*}\label{Eq:LIL-lb2}
		\liminf_{r \to 0+}\sup_{|s| \le   r} \frac{ (\log \log r^{-1})^{\gamma+\frac32} }
		{r^{\gamma+\frac32}} \big|X(t+s)-X(t)\big| \ge c_{4,15}\, t^{\alpha+\beta} \qquad \hbox{ a.s., }
	\end{equation*}
	for some   constant $c_{4,15}>0$,  which is independent of $t>0$.
	
The proof of the  upper bound is more delicate because of the dependence structure of $X$. 	Following the approach of Tudor and Xiao \cite{TX2007}, we use a stochastic integral representation of $X$ to construct a sequence of independent Gaussian processes. More precisely, 
	\begin{equation*}\label{Eq:Rep1}
		X(t) = t^{H} \int_{\mathbb R} e^{i \omega \log t}\, 
		\widehat W(d\omega),\ \ \ \ \ t>0.
	\end{equation*}
This follows from \eqref{eq U int}  and the Lamperti transformation.

	For every integer $n \ge 2$, set
	\begin{equation*}\label{Eq:tn}
		t_n := n^{-n} \ \ \ \hbox{ and }\ \ \ d_n := n^{n-1/2-\gamma}.
	\end{equation*}
    To establish the upper bound, it suffices to show that there exists a finite constant $c_{4,16}>0$ such that 
	\begin{equation}\label{Eq:UP}
		\liminf_{n\to \infty}\sup_{|s|\le 
			t_n} \frac{(\log \log t_n^{-1})^{\gamma+\frac32}} {t_n^{\gamma+\frac32}}\big|X(t+s)-X(t)\big|
		\le c_{4,16}t^{\alpha+\beta} \qquad \hbox{ a.s.}
	\end{equation}

    For each integer $n\ge2$,   define the Gaussian processes $X_n$ and $\widetilde{X}_n$ by
	\begin{equation*}\label{Eq:Xn1}
		X_n(u) = u^{H} \int_{|\omega| \in (d_{n-1}, d_n]} e^{i \omega
			\log u}\, \widehat W(d\omega),\ \quad u>0,
	\end{equation*}
	and
\begin{equation*}\label{Eq:Xn2}
		\widetilde{X}_n(u) = u^{H} \int_{|\omega| \notin (d_{n-1}, d_n]}
		e^{i \omega\log u}\, \widehat W(d\omega), \ \quad u>0,
	\end{equation*}
	respectively. By construction,
    $$X(u) = X_n(u) + \widetilde {X}_n(u),  \ \ \quad u>0.$$  Since the frequency bands  
$$
\{\omega\in\mathbb{R}:d_{n-1}<|\omega|\leq d_n\},
\qquad n\geq2,
$$
are pairwise disjoint and $\widehat{W}$ is an independently scattered Gaussian random measure, the processes $\{X_n\}_{n\ge2}$ are mutually independent. Moreover, for each $n\geq2$, the processes $X_n$ and $\widetilde{X}_n$ are independent.

	Define
    $$h(r) := r^{\gamma+\frac32}\, \big(\log \log r^{-1}\big)^{-({\gamma+\frac32})}.$$
    We shall establish 	the following two claims:
	\begin{itemize}
		\item[(i)]\ There exists a constant $\delta > 0$ such that
		\begin{equation}\label{Eq:UP1}
			\sum_{n=2}^\infty \mathbb P\bigg\{\sup_{|s|\le t_n} \big|X_n(t+s)-X_n(t)\big|
			\le \delta^{\gamma+\frac32}\,t^{\alpha+\beta}\, h(t_n)\bigg\} = \infty.
		\end{equation}
		\item[(ii)]\ For every $\varepsilon> 0$,
		\begin{equation}\label{Eq:UP2}
			\sum_{n=2}^\infty \mathbb P\bigg\{\sup_{|s| \le   t_n}
			\big|\widetilde{X}_n(t+s)-\widetilde{X}_n(t)\big| > \varepsilon\,t^{\alpha+\beta}\, h(t_n)\bigg\} < \infty.
		\end{equation}
	\end{itemize}

By independence and the second Borel–Cantelli lemma, the events in \eqref{Eq:UP1} occur infinitely often, whereas \eqref{Eq:UP2}, together with the first Borel–Cantelli lemma, shows that the remainder is negligible.
 Hence, \eqref{Eq:UP} follows.

	It remains to verify Claims (i) and (ii). By Anderson's inequality \cite{Anderson55} and Theorem \ref{lem smallt-rt+r}, for all sufficiently large $n$,
	\begin{equation*}\label{Eq:UP3}
		\begin{split}
			&\mathbb P\bigg\{\sup_{|s| \le t_n} \big|X_n(t+s)-X_n(t)\big| \le \delta^{\gamma+\frac32}\, t^{\alpha+\beta}\,
			h(t_n)\bigg\}\\
			 \ge & \,\mathbb P\bigg\{\sup_{|s| \le t_n} \big|X(t+s)-X(t)\big| \le \delta^{\gamma+\frac32}\, t^{\alpha+\beta}\, h(t_n)\bigg\}\\
			\ge & \, \exp\Big(- \frac{\kappa_3} {\delta} \log (n \log n) \Big)\\
		=\, & \big(n \log n\big)^{-\frac{ \kappa_3}{\delta}}.
		\end{split}
	\end{equation*}
	Hence (i) holds for $\delta \ge  \kappa_3$.

To prove Claim (ii),   we decompose the second moment of the increment of  $\widetilde {X}_n$ into its low- and high-frequency components. 
For  all \(u,v\in[t-t_n,t+t_n]\), we have
 
\begin{equation}\label{eq:Jc}
\begin{aligned}
\mathbb{E}\left|\widetilde X_n(u)-\widetilde X_n(v)\right|^2
={}&
\int_{|\omega|\leq d_{n-1}}
\left|u^H e^{i\omega\log u}-v^H e^{i\omega\log v}\right|^2 f(\omega)\,d\omega\\
&+
\int_{|\omega|>d_n}
\left|u^H e^{i\omega\log u}-v^H e^{i\omega\log v}\right|^2 f(\omega)\,d\omega\\
=: &\, \mathcal J_1 + \mathcal J_2.
\end{aligned}
\end{equation}
For all sufficiently large \(n\), we have \([t-t_n,t+t_n]\subset[t/2,3t/2]\). By \eqref{Eq: sp2}, there exists a constant \(c_{4,17}>0\) such that
\begin{equation}
\begin{aligned}
\mathcal J_2
&\leq 2\left(u^{2H}+ v^{2H}\right)\int_{|\omega|>d_n}f(\omega)\,d\omega\\
&\leq c_{4,17} d_n^{-(2\gamma+3)}
= c_{4,17} n^{-(2\gamma+3)(n-1/2-\gamma)}.
\end{aligned}
\label{eq:J2}
\end{equation}

Set \(\rho:=|u-v|\). For the low-frequency term $\mathcal J_1$, we use the elementary inequalities:
\begin{align*}
\left|x^H-y^H\right|
&\le |H| \max\{x^{H-1}, y^{H-1}\}|x-y|,
&&(x,y>0),\\
1-\cos x &\le x^2/2,
&&(x\in\mathbb R),\\
|\log x-\log y| &\le \max\{x^{-1}, y^{-1}\} |x-y|,
&& (x,y>0).
\end{align*}
Since \(u,v\in[t/2,3t/2]\) and \(\rho\le 2t_n\), there exist constants \(c_{4,18},c_{4,19}\in(0,\infty)\) such that
\begin{equation}
\begin{aligned}\label{eq:J1}
\mathcal J_1
&=
\int_{|\omega|\le d_{n-1}}
\left[(u^H-v^H)^2 + 2u^Hv^H(1-\cos(\omega\log(u/v)))\right] f(\omega)\,d\omega\\
&\le
H^2\max\{u^{2(H-1)},v^{2(H-1)}\}\rho^2 \int_{\mathbb R} f(\omega)\,d\omega\\
&\quad + u^Hv^H  \max\{u^{-2}, v^{-2}\} \rho^2\int_{|\omega|\le d_{n-1}} \omega^2 f(\omega)\,d\omega\\
&\le
c_{4,18}\left(
\int_{\mathbb R} f(\omega)\,d\omega
+
\int_{|\omega|\le d_{n-1}} \omega^2 f(\omega)\,d\omega
\right)n^{-2n}\\
&\le c_{4,18}\left(\mathbb{E}(X(1)^2)+d_{n-1}^{-1-2\gamma}\right)n^{-2n}\\
&\le c_{4,18}\left(\mathbb{E}(X(1)^2)+n^{-(2\gamma+1)(n-3/2-\gamma)}\right)n^{-2n}\\
&\le c_{4,19} n^{-(2\gamma+3)(n-1/2-\gamma)}.
\end{aligned}
\end{equation}
In the fourth line, we applied \eqref{eq:r-spectral} and \eqref{Eq: sp1}. In the last two lines, we used the facts that
\[
-(2\gamma+1)\left(n-3/2-\gamma\right)>0
\quad\text{and}\quad
2n\ge (2\gamma+3)\left(n-1/2-\gamma\right),
\qquad n\ge2.
\]
From \eqref{eq:Jc}, \eqref{eq:J2}, and \eqref{eq:J1}, there exists a constant \(c_{4,20}>0\) such that
\begin{align}\label{widetildex1}
\mathbb E|\widetilde X_n(u)-\widetilde X_n(v)|^2
\le c_{4,20} n^{-(2\gamma+3)(n-1/2-\gamma)}.
\end{align}
Moreover, from Lemma~\ref{lem mom}, there exists a constant \(c_{4,21}>0\) such that
\begin{align}\label{widetildex2}
\mathbb E|\widetilde X_n(u)-\widetilde X_n(v)|^2
\le \mathbb E| X_n(u)-X_n(v)|^2\le c_{4,21}\rho^{2\gamma+3}.
\end{align}
Combining \eqref{widetildex1} and \eqref{widetildex2} yields a constant \(c_{4,22}>0\) such that, for all sufficiently large \(n\) and all \(u,v\in[t-t_n,t+t_n]\),
	
\[
\mathbb E|\widetilde X_n(u)-\widetilde X_n(v)|^2
\le 
c_{4,22}\min\left\{|u-v|^{2\gamma+3},\,
n^{-(2\gamma+3)(n-1/2-\gamma)}
\right\}=:\varphi_{n}(\rho)^2.
\]

Let \[
Z_n^+(s):=\widetilde X_n(t+s)-\widetilde X_n(t)
\quad\text{and}\quad
Z_n^-(s):=\widetilde X_n(t-s)-\widetilde X_n(t).
\] For either choice of sign and  all $0\le s\le s+\rho\le t_n$, the estimate above gives
\begin{equation}
\mathbb{E}\big|Z_n^{\pm}(s+\rho)-Z_n^{\pm}(s)\big|^2
\le \varphi_n(\rho)^2.
\label{eq:Z-increment}
\end{equation}

	Set $x_n:=(8\log n)^{1/2}$. Fix $\varepsilon>0$. For each integer $p\ge1$,   define 
	\begin{align*}
		\theta_n(p):=\frac{\varepsilon}{2} t^{\alpha+\beta} (p+1)^{-2} h(t_n)/\varphi_n\left(t_n n^{-2^p}\right)\ \ \ \ \text{for  all } p\ge1.
	\end{align*} 
For all sufficiently large \(n\), we have
\[
\theta_n(p)>4(\log n)^{1/2}2^{p/2}\qquad\forall p\ge1,
\]
and
\[
x_n\varphi_n(t_n)+\sum_{p=1}^{\infty}\theta_n(p)\varphi_n(t_n n^{-2^p})
<\varepsilon t^{\alpha+\beta}h(t_n).
\]
Moreover,
\[
\sum_{n=2}^{\infty}n^2e^{-x_n^2/2}
+\sum_{n=2}^{\infty}\sum_{p=1}^{\infty} n^{2^{p+1}}e^{-8(\log n)2^p}
<\infty.
\]
Applying Lemma \ref{lem: Fern} with \eqref{eq:Z-increment} yields
\[
\sum_{n=2}^{\infty}\mathbb P\left\{
\sup_{|s|\le t_n}|\widetilde X_n(t+s)-\widetilde X_n(t)|
>\varepsilon t^{\alpha+\beta}h(t_n)
\right\}<\infty.
\]
This proves \eqref{Eq:UP2} and completes the proof.
\end{proof}

\section{Proof of Theorem~\ref{thm CLTL0infinity}}
Theorem \ref{thm CLTL0infinity}  follows from the lower-class criteria in  Theorems \ref{thoLLC0} and \ref{thoLLCinfty}. Their proofs use Talagrand’s method \cite{talagrand1996lower} and its extensions \cite{elnouty2004,elnouty2011,LWW,wang2022lower}. We first recall the notion of lower classes for stochastic processes, following   R\'ev\'esz \cite{revesz1990random}.
\begin{definition}
Let $\{M(t)\}_{t\geq 0}$ be a stochastic process, and let $\xi$ be a real-valued function defined on $(0,\infty)$.
\begin{enumerate}
\item[(a)] The function $\xi$ is said to belong to the \textit{lower-lower class} of $M$ at $\infty$ \textup{(resp. at $0$)}, denoted by
$\xi\in\textit{LLC}_{\infty}(M)$
\textup{(resp. $\xi\in\textit{LLC}_{0}(M)$)}, if, almost surely, there exists a random variable $t_0=t_0(\omega)>0$ such that
$M(t)\geq\xi(t)$ for all $t>t_0$
\textup{(resp. for all $0<t<t_0$)}.

\item[(b)] The function $\xi$ is said to belong to the \textit{lower-upper class} of $M$ at $\infty$ \textup{(resp. at $0$)}, denoted by
$\xi\in\textit{LUC}_{\infty}(M)$
\textup{(resp. $\xi\in\textit{LUC}_{0}(M)$)}, if, almost surely, there exists a sequence $\{t_n(\omega)\}_{n\geq1}$ such that
$t_n(\omega)\uparrow\infty$
\textup{(resp. $t_n(\omega)\downarrow0$)}
as $n\to\infty$, and
$M(t_n(\omega))\leq\xi(t_n(\omega))$
for every $n\in\mathbb N$.
\end{enumerate}
\end{definition}

Throughout this section, set
$$
\lambda:=\gamma+\frac32,
\qquad
q:=\frac{2}{2\gamma+3},
$$
and define
\begin{equation*}\label{def:Msup}
	M(t) := \sup_{s \in [0,t]} |X(s)|.
\end{equation*}
By the $H$-self-similarity of $X$, the  small ball probability   function of $M$ is given by 
\begin{equation}\label{Eq:sb31}
\varphi(\varepsilon) := \mathbb{P}\left\{ M(1) \leq \varepsilon \right\} = \mathbb{P}\left\{ M(t) \leq \varepsilon t^{H} \right\},\qquad t>0.
\end{equation}
 
\begin{theorem}\label{thoLLC0}
	Assume that Condition \eqref{eq constant cond} holds. Let $\xi : (0, e^{-e}] \to (0, \infty)$ be a nondecreasing continuous function.
	\begin{enumerate}
		\item[(a)] \textit{(Sufficiency)}. If
		\begin{align}\label{LLC0Sufficiency}
			\frac{\xi(t)}{t^H} \text{ is bounded and } 
			I_0(\xi) := \int_0^{e^{-e}} \left( \frac{\xi(t)}{t^H} \right)^{-\frac{2}{2\gamma+3}} \varphi\!\left( \frac{\xi(t)}{t^H} \right) \frac{dt}{t} < +\infty,
		\end{align}
		then $\xi \in LLC_0(M)$.	
		\item[(b)] \textit{(Necessity)}. Conversely, if $\xi \in LLC_0(M)$ and there exists a constant $c_{5,1} \ge 1$ such that $\xi(2t) \le c_{5,1}\xi(t)$ for all $t \in (0, e^{-e}/2]$, then \eqref{LLC0Sufficiency} holds.
	\end{enumerate}
	
\end{theorem}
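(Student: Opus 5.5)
I will follow Talagrand's lower-class scheme \cite{talagrand1996lower}, as adapted in \cite{elnouty2004,wang2022lower}. Throughout I use the scaling identity \eqref{Eq:sb31}, $\mathbb P\{M(t)\le \xi(t)\}=\varphi(\xi(t)/t^H)$, together with the two-sided bound of Theorem~\ref{lem small0t}:
$$\exp(-\kappa_1 x^{-q})\le\varphi(x)\le\exp(-\kappa_2 x^{-q}),\qquad q=\tfrac{2}{2\gamma+3}.$$
The natural discretization is a geometric-type sequence $t_k\downarrow0$ whose spacing is adapted to $\xi$. Write $\psi(t):=\xi(t)/t^H$. I choose $t_{k+1}=t_k\bigl(1-\psi(t_k)^{q}\bigr)$, or a comparable rule, so that one step of the grid has ``cost'' $\psi(t_k)^{q}$. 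With this choice the series $\sum_k \varphi(\psi(t_k))$ is comparable to $I_0(\xi)$, since $dt/t\approx\psi^{q}$ on each step.

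\textbf{Sufficiency.} Suppose $I_0(\xi)<\infty$. Boundedness of $\psi$ together with $I_0<\infty$ and the upper small-ball bound forces $\psi(t)\to0$; otherwise $\varphi(\psi)$ would stay bounded below on a set of infinite $dt/t$-measure. For $t\in[t_{k+1},t_k]$, monotonicity of $M$ and $\xi$ gives
$$\{M(t)<\xi(t)\}\subset\{M(t_{k+1})<\xi(t_k)\}.$$
I then estimate $\mathbb P\{M(t_{k+1})\le\xi(t_k)\}=\varphi\bigl(\xi(t_k)/t_{k+1}^H\bigr)$. Because $t_k/t_{k+1}=1+O(\psi^q)$, we have $\xi(t_k)/t_{k+1}^H=\psi(t_k)(1+O(\psi^{q}))$. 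Using the exponential form of $\varphi$, this perturbation changes the exponent only by $O(\psi^{-q}\cdot\psi^q)=O(1)$. A cleaner route is a slightly modified function $\xi_1=\xi(1+\delta)$ combined with Anderson-type monotonicity of $\varphi$. Summing over $k$ gives a series bounded by a constant times $I_0(\xi)$, and Borel--Cantelli finishes the argument.

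\textbf{Necessity.} This is the main obstacle. Assume $I_0(\xi)=\infty$ and the doubling condition; I must show $M(t)\le\xi(t)$ infinitely often as $t\to0$. First reduce to $\psi$ lying between $c(\log\log t^{-1})^{-1/q}$ bounds: if $\psi$ is much smaller the integral converges, and if it is much larger one compares with $\xi_2=\min(\xi, C t^H(\log\log t^{-1})^{-1/q})$. The upper half of the LIL follows analogously. Then pick a sparse subsequence of the $t_k$ that still carries a divergent sum, and apply a Kochen--Stone / Erd\H{o}s--R\'enyi second Borel--Cantelli lemma. This requires the quasi-independence estimate
$$\mathbb P\{M(t_i)\le\xi(t_i),M(t_j)\le\xi(t_j)\}\le C\,\mathbb P\{M(t_i)\le\xi(t_i)\}\,\mathbb P\{M(t_j)\le\xi(t_j)\}$$
for well-separated $t_j\ll t_i$.

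To decouple, I use the Volterra structure. On $[t_j,t_i]$, split $X(s)=\int_0^{t_j}\cdots dW+\int_{t_j}^{s}\cdots dW$. The second piece is independent of $\mathcal F_{t_j}$ (Proposition~\ref{lem SLND} and the equality of filtrations from \cite{MS22b}). The first piece is a smooth ``memory'' term whose size on $[2t_j,t_i]$ is controlled by the kernel decay, of order $t_j^{\alpha+1/2}s^{\beta+\gamma}$ and similar expressions. Its supremum is then negligible relative to $\xi(t_i)$ with high probability when $t_j/t_i$ is small. For the initial segment $[0,2t_j]$, use Anderson's inequality in the form of Talagrand's Lemma: conditioning on the independent part cannot increase the small-ball probability more than by a factor absorbed through Theorem~\ref{lem small0t}. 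The doubling condition $\xi(2t)\le c_{5,1}\xi(t)$ lets me replace $[0,t_i]$ by $[2t_j,t_i]$ at the cost of constants. Controlling this memory term is where I expect the most work. It requires separate treatment in the three regimes of $\beta+\gamma$ seen in Lemma~\ref{key1}. A convenient alternative is to perform the decoupling through the Lamperti process $U$ and the spectral splitting used in the proof of Theorem~\ref{thm CLIL}, using the covariance decay of Lemma~\ref{key1}.
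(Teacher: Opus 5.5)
Your overall scheme (adapted grid, Borel--Cantelli, a second-moment argument with Volterra-splitting decoupling) is the one the paper imports from \cite{wang2022lower}. However, your necessity argument has a genuine gap at the step ``pick a sparse subsequence of the $t_k$ that still carries a divergent sum''. Your decoupling needs $t_j/t_i$ small, so the subsequence must be at least geometrically separated. Along such a subsequence, $\sum_m \varphi(\xi(t_{k_m})/t_{k_m}^H)$ behaves like $\int \varphi(\xi(t)/t^H)\,dt/t$, so it loses exactly the weight $(\xi(t)/t^H)^{-q}\asymp\log\log t^{-1}$ that appears in $I_0(\xi)$. For instance, let $g(t)=(\log t^{-1})^{-1}(\log\log t^{-1})^{-3/2}$ and $\xi(t)=t^H\varphi^{-1}(g(t))$, which is continuous, nondecreasing and doubling. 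Theorem~\ref{lem small0t} gives $\xi(t)/t^H\asymp(\log\log t^{-1})^{-\lambda}$, hence $I_0(\xi)\asymp\int^\infty v^{-1}(\log v)^{-1/2}\,dv=\infty$. Yet $\sum_m g(t_{k_m})\le C\sum_m m^{-1}(\log m)^{-3/2}<\infty$ whenever $t_{k_{m+1}}\le\rho\,t_{k_m}$ with $\rho<1$. So the second Borel--Cantelli argument must be run on the full grid with $\log(t_k/t_{k+1})\asymp(\xi(t_k)/t_k^H)^q$. With $A_i=\{M(t_i)\le\xi(t_i)\}$, you must then show that the strongly dependent nearby pairs contribute only $O(\sum_i\mathbb P(A_i))$ to $\sum_{i,j}\mathbb P(A_i\cap A_j)$, and nothing in your proposal controls these pairs. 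This is the role of Proposition~\ref{onemax1}. Iterating the one-sided SLND of Proposition~\ref{lem SLND} with Anderson's inequality along $t_n-t_n^{-(\alpha+\beta)/\lambda}\eta^{1/\lambda}=t_{n-1}$ yields \eqref{onemax}. For $\eta=\xi(u)$ its exponent equals $c_{5,2}(1-t/u)(\xi(u)/u^H)^{-q}$, i.e.\ it is proportional to the number of grid steps between $t$ and $u$. This gives geometric decay for nearby pairs and also handles intermediate ratios, whereas Proposition~\ref{twomax2} becomes effective only once $u/t$ exceeds a power of $\log\log u^{-1}$.

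That threshold reflects a precision issue affecting both halves of your sketch: being negligible ``relative to $\xi(t_i)$'' is not enough. By Borell's convexity of $-\log\varphi$ and Lemma~\ref{convex1}, for $\delta\le\eta$ the ratio $\varphi(\eta+2\delta)/\varphi(\eta)$ lies between $\exp(c\,\delta\eta^{-q-1})$ and $\exp(C\delta\eta^{-q-1})$. So a perturbation is harmless only if $\delta=O(\eta^{1+q})$, which is the origin of the factor $\exp(c_{5,8}(u/t)^{-\tau}\eta^{-q-1})$ in \eqref{twomax}. For the same reason, the two-sided bounds of Theorem~\ref{lem small0t}, with $\kappa_1\neq\kappa_2$, do not give $\varphi(x(1+O(x^q)))\le C\varphi(x)$ in your sufficiency step; you need Lemma~\ref{convex2}. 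Your alternative $(1+\delta)\xi$ is not admissible, because $I_0$ is not stable under constant multiples: for $\xi=ct^H(\log\log t^{-1})^{-\lambda}$ it switches from finite to infinite as $c$ crosses a threshold. The sufficiency outline does go through once you use Lemmas~\ref{convex2} and~\ref{convex3} and compare $\varphi(\xi(t_k)/t_{k+1}^H)$ with the integral over $[t_k,t_k(1+(\xi(t_k)/t_k^H)^q)]$. That interval is the right one because $\xi\ge\xi(t_k)$ there, whereas inside $[t_{k+1},t_k]$ the function $\xi$ may drop sharply. Finally, your reduction to $\xi_2=\min\{\xi,Ct^H(\log\log t^{-1})^{-\lambda}\}$ does not preserve $I_0=\infty$. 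Divergence can be carried by sparse stretches where $\xi/t^H$ is of order one, and on those stretches $\xi_2$ contributes only a summable amount.
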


\begin{theorem}\label{thoLLCinfty}
	Assume that Condition \eqref{eq constant cond} holds. Let $\xi : [e^{e}, \infty) \to (0, \infty)$ be a nondecreasing continuous function. Then $\xi \in LLC_{\infty}(M)$ if and only if
	\[
	\frac{\xi(t)}{t^H} \text{ is bounded and } 
	I_{\infty}(\xi) := \int_{e^{e}}^{\infty} \left( \frac{\xi(t)}{t^H} \right)^{-\frac{2}{2\gamma+3}} \varphi\!\left( \frac{\xi(t)}{t^H} \right) \frac{dt}{t} < +\infty.
	\]
\end{theorem}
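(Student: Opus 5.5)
The plan follows Talagrand's lower-class method \cite{talagrand1996lower} in the form used by El-Nouty \cite{elnouty2004,elnouty2011} and \cite{LWW,wang2022lower}. By self-similarity the statement at $\infty$ is the mirror image of Theorem~\ref{thoLLC0}. I will therefore work along the geometric sequence $t_k=e^{k}$ (or a slightly finer sequence $t_k=e^{k/\log k}$ if needed) and reduce the integral $I_\infty(\xi)$ to the series $\sum_k \psi_k^{-q}\varphi(\psi_k)$, where $\psi_k=\xi(t_k)/t_k^H$ and $q=2/(2\gamma+3)$. Monotonicity of $\xi$ and of $M$, together with the two-sided estimate $\varphi(\varepsilon)\asymp \exp(-c\varepsilon^{-q})$ from Theorem~\ref{lem small0t}, lets me compare the integral with the sum up to harmless constant changes in the argument of $\varphi$. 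The boundedness of $\xi(t)/t^H$ is needed for necessity: if it fails along a sequence, then $M(t)<\xi(t)$ infinitely often, since $M(t)/t^H$ is a.s.\ finite infinitely often in distribution.

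\textbf{Sufficiency.} I first show that $\varphi$ satisfies the doubling-type regularity $\varphi(2\varepsilon)\le$ a quantity controlled by $\varphi(\varepsilon)$ after rescaling. This follows from Theorem~\ref{lem small0t}, and only the exponential order in $\varepsilon^{-q}$ matters. Next I cover $[t_k,t_{k+1}]$ by finitely many points $s_{k,j}$ with spacing chosen relative to $\psi_k^{q}$. This makes the event $\{\exists t\in[t_k,t_{k+1}]: M(t)<\xi(t)\}$ contained in $\bigcup_j\{M(s_{k,j})\le \xi(s_{k,j+1})\}$. The number of points is of order $\psi_k^{-q}$, which is exactly why the factor $(\xi/t^H)^{-q}$ appears in the integral. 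The first Borel--Cantelli lemma then gives $\xi\in LLC_\infty(M)$, using $\sum_k\psi_k^{-q}\varphi(c\psi_k)<\infty$.

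\textbf{Necessity.} This is the main obstacle, because $X$ has long memory. Assume $I_\infty(\xi)=\infty$. First I reduce to the case $\psi(t)\to0$ at the LIL rate, truncating $\xi$ between $t^H(\log\log t)^{-\lambda}/C$ and $Ct^H(\log\log t)^{-\lambda}$; the truncation does not change divergence, as in Talagrand. Then I select a sparse subsequence $s_k$ (e.g.\ $s_{k+1}/s_k\ge$ a large power of $\log k$) along which $\sum_k\varphi(\psi(s_k))=\infty$ still holds. Along this subsequence I prove quasi-independence of the events $E_k=\{M(s_k)\le\xi(s_k)\}$. For this I use the independent-band decomposition of the Lamperti stationary process $U$ via its spectral measure, as in the proof of Theorem~\ref{thm CLIL}. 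I split $X=X_k+\widetilde X_k$ with independent frequency blocks. The remainder on $[0,s_k]$ is controlled by Lemma~\ref{lem spectral} together with the covariance decay in Lemma~\ref{key1}, which gives exponential decay of $r$ in $\log$-time. The contribution from $[0,s_{k-1}]$ is handled through the one-sided SLND, Proposition~\ref{lem SLND}, combined with Anderson's inequality. Finally, a Kochen--Stone / Erdős--Rényi form of the second Borel--Cantelli lemma yields $\mathbb P(E_k\ \text{i.o.})>0$. A zero-one law, via the tail $\sigma$-field of the independent blocks as in Lemma~\ref{lem:01lawZ}, upgrades this to probability one, so $\xi\notin LLC_\infty(M)$. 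I expect the delicate step to be bounding $\mathbb P(E_j\cap E_k)\le(1+o(1))\mathbb P(E_j)\mathbb P(E_k)$ uniformly in $j<k$. My first attempt there is to perturb the radius by a factor $(1\pm(\log k)^{-2})$, absorb the small remainder, and use the exponential form of $\varphi$ to show that such a perturbation changes probabilities only by a factor $1+o(1)$.
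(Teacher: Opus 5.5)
Your necessity argument fails at the step where you pass to a sparse subsequence $s_k$ along which $\sum_k\varphi(\xi(s_k)/s_k^H)=\infty$, because such a subsequence need not exist. The weight $(\xi(t)/t^H)^{-q}\asymp\log\log t$ in $I_\infty(\xi)$ counts the effectively independent chances for $\{M(t)\le\xi(t)\}$ inside one block $[t,et]$. Any subsequence with $s_{k+1}/s_k\ge e$ throws this weight away.

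For example, define $\xi$ by $\varphi(\xi(t)/t^H)=(\log t)^{-1}(\log\log t)^{-3/2}$. This $\xi$ is nondecreasing for large $t$ and lies inside your truncation band. By \eqref{eq:psi_bound}, $(\xi(t)/t^H)^{-q}\asymp\log\log t$, so $I_\infty(\xi)\asymp\int^\infty dv/(v\sqrt{\log v})=\infty$, and the theorem gives $\xi\notin LLC_\infty(M)$. Yet along any subsequence with $s_{k+1}/s_k\ge e$ the $k$-th term is $O(k^{-1}(\log k)^{-3/2})$, which is summable. So your Kochen--Stone argument cannot even start.

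The argument the paper imports from \cite{talagrand1996lower,wang2022lower} keeps a dense recursive grid $t_j$ whose relative steps are of order $(\xi(t)/t^H)^{q}$. Write $E_j=\{M(t_j)\le\xi(t_j)\}$. Neighbouring events on this grid are strongly dependent, so a uniform bound $\mathbb P(E_j\cap E_k)\le(1+o(1))\mathbb P(E_j)\mathbb P(E_k)$ is false for nearby indices. Proposition~\ref{onemax1} handles exactly this case. With $\theta=\xi(t_j)/t_j^H$ and $\eta=\xi(t_k)$ it gives $\mathbb P(E_j\cap E_k)\le 2\mathbb P(E_j)\exp\bigl(-c\,\frac{t_k-t_j}{t_k}(\xi(t_k)/t_k^H)^{-q}\bigr)$, which decays exponentially in the number of grid steps between $t_j$ and $t_k$. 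Your SLND-plus-Anderson idea belongs here. Proposition~\ref{twomax2} supplies near-independence only once $t_k/t_j$ is large.

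Two further tools need replacing. First, the spectral band split of $U$ separates frequencies, not time regions. Since $M(s)/s^H=\sup_{\tau\le\log s}e^{H(\tau-\log s)}|U(\tau)|$, every band enters at order one (the low bands carry most of $r(0)$), so discarding bands is not a small perturbation. Lemma~\ref{lem spectral} is a purely local, high-frequency estimate and says nothing about decorrelation at large times. For the same reason the tail-$\sigma$-field argument of Lemma~\ref{lem:01lawZ} does not transfer: there, finitely many Lipschitz bands were negligible at scale $r^{\gamma+3/2}$, whereas here a single band moves $X(t)$ by order $t^H\gg\xi(t)$. The decoupling that works is in time: split $W$ at $v=\sqrt{tu}$ and control the $[0,v]$ part with Lemmas~\ref{lem:variance_X1} and~\ref{lem:dudley}, as in Proposition~\ref{twomax2}.

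Second, constant-factor changes in the argument of $\varphi$ are not harmless for a sharp test. For $c>1$ and small $\varepsilon$, $\varphi(c\varepsilon)/\varphi(\varepsilon)\ge\exp(c'\varepsilon^{-q})$, which is a power of $\log t$ at the LIL scale. This is why the convergence of $I_\infty$ for $\xi=ct^H(\log\log t)^{-\lambda}$ depends on $c$. In your sufficiency step, bounding every term of block $k$ by $\varphi(c\,\xi(t_k)/t_k^H)$ forces $c>1$, and $\sum_k(\xi(t_k)/t_k^H)^{-q}\varphi(c\,\xi(t_k)/t_k^H)<\infty$ does not follow from $I_\infty(\xi)<\infty$. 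You must keep the argument at the local value $\xi(s)/s^H$ up to a relative error $O((\xi(s)/s^H)^q)$, that is, an additive error $O(\varepsilon^{1+q})$. Lemmas~\ref{convex2} and~\ref{convex3} then show this costs only a constant factor. Those lemmas rest on Borell's convexity of $-\log\varphi$; the two-sided bounds of Theorem~\ref{lem small0t} alone cannot give them.
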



\subsection{Proof of Theorems \ref{thoLLC0} and~\ref{thoLLCinfty}}

\subsubsection{A maximal inequality} 
\begin{proposition}\label{onemax1}
	Assume that Condition \eqref{eq constant cond} holds. Then  there exists a constant $c_{5,2} > 0$ such that,  for every $0 < t < u$ and $\theta, \eta > 0$,
\begin{equation}\label{onemax}
\mathbb{P}\left\{ M(t) \le \theta t^H,\; M(u) \le \eta \right\}
\le 2\varphi(\theta) \exp\left(-c_{5,2} (u-t) u^{\frac{2\alpha + 2\beta}{2\gamma+3}} \eta^{-\frac{2}{2\gamma+3}}\right).
\end{equation}

\begin{proof}
	If $$(u-t) u^{\frac{2\alpha + 2\beta}{2\gamma+3}} \eta^{-\frac{2}{2\gamma+3}} \le 2,$$ then \eqref{onemax} holds with $c_{5,2} = (\log 2)/2$. Hence  it remains to consider the case 
    \begin{align}\label{eq u 2}
	(u-t) u^{\frac{2\alpha + 2\beta}{2\gamma+3}} \eta^{-\frac{2}{2\gamma+3}}  > 2.
    \end{align}
    We divide the remainder of the proof into three steps.

\noindent	\textbf{Step 1.}  Starting from $t_0:=t$,  define  a sequence $\{t_n:\, n\ge 0\}$ recursively by 
	\begin{equation}\label{tnn-1}
		t_n - t_n^{-\frac{\alpha + \beta}{\lambda}} \eta^{\frac{1}{\lambda}} = t_{n-1}.
	\end{equation} 

    The sequence $\{t_n\}$ is strictly increasing and diverges to infinity. Indeed, if $t_n\to L<\infty$, then 
	$$
t_n-t_{n-1}
\longrightarrow
L^{-\frac{\alpha+\beta}{\lambda}}
\eta^{\frac{1}{\lambda}}\neq 
0,
$$
which contradicts the convergence of $\{t_n\}_{n\geq0}$.

    For each integer $k\ge0$,  define  the event
	$$
	A_k := \left\{M(t) \le \theta t^H\right\} \cap \left\{M(t_k) \le \eta\right\}.
	$$
Since $t_0=t<u$ and $t_k\to\infty$ as $k\to\infty$, there exists a unique integer $k_0\geq0$ such that
$$
t_{k_0}\leq u<t_{k_0+1}.
$$
Since $M$ is nondecreasing and $t_{k_0}\leq u$, we have
\begin{equation}\label{ak0-sup}
A_{k_0} \supseteq \left\{M(t) \le \theta t^H\right\} \cap \left\{M(u) \le \eta\right\}.
\end{equation}
It remains  to  prove that there  exist constants   $\rho\in(0,1)$ and $c_{5,3}>0$ such that
\begin{equation}\label{pak}
\mathbb{P}\{ A_k \} \le \varphi(\theta)\rho^k, \qquad k\ge 0.
\end{equation}
and  
\begin{equation}\label{k0-bound}
k_0 > c_{5,3}(u-t)u^{\frac{\alpha+\beta}{\lambda}}\eta^{-\frac{1}{\lambda}}.
\end{equation}
     Therefore, combining \eqref{ak0-sup}, \eqref{pak}, and \eqref{k0-bound}, we obtain \eqref{onemax}.

	\textbf{Step 2.} We  prove \eqref{pak} by induction on $k$.  The case $k=0$ follows from  \eqref{Eq:sb31}.  
	For the induction step,  observe that
	\[
	A_{k+1} \subseteq A_k \cap \bigl\{ |X(t_{k+1}) - X(t_k)| \le 2\eta \bigr\}.
	\]
	By \eqref{xt-xs}, we have 
	\begin{align*}
		X(t_{k+1}) - X(t_k) 
		&= \int_0^{t_k} r^\alpha \left( \int_{t_k}^{t_{k+1}} y^\beta (y - r)^\gamma dy \right) dW_r \\
		&\quad + \int_{t_k}^{t_{k+1}} r^\alpha \left( \int_r^{t_{k+1}} y^\beta (y - r)^\gamma dy \right)dW_r\\
		&= :J_1 + J_2.
	\end{align*}
	
By the lower bound for $I_2$ established in the proof of Lemma~\ref{lem mom} and \eqref{tnn-1}, we get
	$$\operatorname{Var}(J_2) \ge c_{2,1} t_{k+1}^{2\alpha+2\beta}|t_{k+1} - t_k|^{2\gamma+3} = c_{2,1}{\eta^2},$$ 
	which implies that 
\[
\mathbb{P}\{ |J_2| \le 2\eta \} \le \Phi(2/\sqrt{c_{2,1}}) - \Phi(-2/\sqrt{c_{2,1}})=:\rho.
\]
By Anderson's inequality \cite{Anderson55} and the independence of $J_2$ and $\sigma\{W(r): r \le t_k\}$, we obtain
\[
\begin{aligned}
\mathbb{P}\{ A_{k+1} \}
&\le \mathbb{E}\left[ \mathbb{P} \left\{ A_k \cap \{ |J_1+J_2| \leq 2\eta \} \mid \sigma\{W(r): r\le t_k\} \right\} \right] \label{eq:prob_bound}\\
&\le \mathbb{E}\Bigl[ \mathbf{1}_{A_k} \cdot \mathbb{P}\bigl\{ |J_1+J_2|\le 2\eta \mid \sigma\{W(r): r\le t_k\} \bigr\} \Bigr] \\
&\le \mathbb{P}\{ A_k \} \cdot \mathbb{P}\{ |J_2|\le 2\eta \} \\
&\le \mathbb{P}\{ A_k \} \rho.
\end{aligned}
\]
	Thus, \eqref{pak}  follows  by induction. 
    
\noindent	\textbf{Step 3.}   We now prove \eqref{k0-bound}. 
We first  claim that   
	\begin{equation}\label{tk0-bound}
		t_{k_0}=t_{k_0+1}-t_{k_0+1}^{-\frac{\alpha+\beta}{\lambda}}\eta^{\frac 1 \lambda}
		>u-u^{-\frac{\alpha+\beta}{\lambda}}\eta^{\frac 1 \lambda}.
	\end{equation}
    To this end,   define $$f(x):=x-x^{-\frac{\alpha+\beta}{\lambda}}\eta^{\frac1 \lambda}, \ \ x>0.$$  
  If $\alpha+\beta\ge 0$, then $f'(x)>0$ and $f$ is strictly increasing. If $\alpha+\beta<0$, then by \eqref{eq u 2}, $u^{H/\lambda}>2\eta^{1/\lambda}$. Consequently, for all $x\ge u$, we have
	\[
f'(x)=1+\frac{\alpha+\beta}{\lambda}x^{-\frac H \lambda}\eta^{\frac{2}{2\gamma+3}}>1+\frac{\alpha+\beta}{2\lambda}>\frac12.
	\] 
 Thus, $f$ is increasing on $[u,\infty)$.  Since $t_{k_0+1}>u$,  the recurrence relation  \eqref{tnn-1} gives  \eqref{tk0-bound}. Moreover,  by \eqref{eq u 2}, we have
\[
u^{-\frac{\alpha+\beta}{\lambda}}\eta^{\frac{1}{\lambda}}<\frac{u-t}{2}.
\]
Combining this estimate  with \eqref{tk0-bound}, we obtain
\begin{equation}\label{tk0-half}
t_{k_0}>u-u^{-\frac{\alpha+\beta}{\lambda}}\eta^{\frac{1}{\lambda}}
>\frac{u+t}{2}.
\end{equation} 

We now   derive the required lower bound for $k_0$. For
 $\alpha+\beta\le 0$, \eqref{tnn-1} yields
\[
t_{k_0}-t
=\sum_{n=1}^{k_0}(t_n-t_{n-1})
=\eta^{\frac{1}{\lambda}}\sum_{n=1}^{k_0}t_n^{-\frac{\alpha+\beta}{\lambda}}
\le k_0\eta^{\frac{1}{\lambda}}u^{-\frac{\alpha+\beta}{\lambda}}.
\]
Combining this with  \eqref{tk0-half} yields
\begin{equation}\label{k0-bound-case1}
k_0
> \frac12 (u-t)u^{\frac{\alpha+\beta}{\lambda}}\eta^{-\frac{1}{\lambda}}.
\end{equation}

For $\alpha+\beta>0$, \eqref{tnn-1} gives,  for every $1\le n\le k_0$,
$$
\eta^{\frac{1}{\lambda}}
= t_n^{\frac{\alpha+\beta}{\lambda}}(t_n-t_{n-1})
\ge \int_{t_{n-1}}^{t_n} x^{\frac{\alpha+\beta}{\lambda}}dx.
$$
Summing over $n=1,\ldots,k_0$  and applying \eqref{tk0-half},  we obtain
\begin{equation}\label{k0-bound-case2}
\begin{split}
k_0
>  \,  \eta^{-\frac{1}{\lambda}}\int_t^{\frac{t+u}{2}}x^{\frac{\alpha+\beta}{\lambda}}dx =\,  \eta^{-\frac{1}{\lambda}} u^{\frac{\alpha+\beta}{\lambda}+1}
\int_{\frac{t}{u}}^{\frac{t/u+1}{2}}
y^{\frac{\alpha+\beta}{\lambda}}\,dy
\ge  \, 
c_{5,4} (u-t)u^{\frac{\alpha+\beta}{\lambda}}\eta^{-\frac{1}{\lambda}},
\end{split}
\end{equation}
where 
\[
c_{5,4}
:=
\inf_{0\le s<1}
\frac{1}{1-s}
\int_s^{\frac{1+s}{2}}y^{\frac{\alpha+\beta}{\lambda}}dy.
\]
The function inside the infimum is positive and continuous on $[0,1)$ and extends continuously to $s=1$ with limit $1/2$. Therefore, $c_{5,4}>0$. 

Combining \eqref{k0-bound-case1} and \eqref{k0-bound-case2}, we obtain  \eqref{k0-bound}. The proof is complete. 
\end{proof}
\end{proposition}

\subsubsection{Small-ball functions}
Define
\begin{equation*}\label{eq:psi_def}
\psi(\varepsilon):=-\log\varphi(\varepsilon).
\end{equation*}
Then $\psi$ is nonnegative and    non-increasing.  Moreover, by Borell’s theorem  \cite{borell1974convex}, the function $\psi$ is convex. Consequently,  its right derivative $\psi'_+(\varepsilon)$ is non-decreasing and non-positive.  Hence,  $|\psi'_+|$ is non-increasing.

By Theorem \ref{lem small0t} with $t=1$, there exists a constant $c_{5,5} \ge 1$ such that for all $\varepsilon<1$,
\begin{equation}
	c_{5,5}^{-1} \varepsilon^{-\frac{2}{2\gamma+3}} \le \psi(\varepsilon) \le c_{5,5} \varepsilon^{-\frac{2}{2\gamma+3}}. \label{eq:psi_bound}
\end{equation}

The following lemmas give further properties of \(\varphi\) and \(\psi\), analogous to those for FBM in \cite[Section 2]{talagrand1996lower}. We omit the proofs, since they follow the same arguments as in \cite[Section 2]{talagrand1996lower}.
\begin{lemma}\label{convex1}Assume that Condition \eqref{eq constant cond} holds. 
There exists a constant $K_{5,1}$ such that for all $\varepsilon\in\left(0, \, K_{5,1}^{-1}\right)$,
\begin{equation}\label{convex}
-K_{5,1}\varepsilon^{-\frac{2}{2\gamma+3}-1}
\le \psi'_+(\varepsilon)
\le -K_{5,1}^{-1}\varepsilon^{-\frac{2}{2\gamma+3}-1}.
\end{equation}
\end{lemma}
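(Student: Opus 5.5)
The plan is to derive the two-sided bound on $\psi'_+$ purely from convexity and monotonicity of $\psi$ together with the two-sided bound \eqref{eq:psi_bound}, following Talagrand's argument for fBm. Write $q=\frac{2}{2\gamma+3}$, so that \eqref{eq:psi_bound} reads $c_{5,5}^{-1}\varepsilon^{-q}\le\psi(\varepsilon)\le c_{5,5}\varepsilon^{-q}$ for $\varepsilon<1$. Recall that $\psi$ is convex and non-increasing, and that $|\psi'_+|$ is non-increasing.

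\textbf{Upper bound on $|\psi'_+|$.} For $0<a<\varepsilon$, convexity gives that the slope of a chord lying to the left of $\varepsilon$ is at most $\psi'_+(\varepsilon)$, so
\[
\psi'_+(\varepsilon)\ge\frac{\psi(\varepsilon)-\psi(a)}{\varepsilon-a}.
\]
Take $a=\varepsilon/2$. Then
\[
|\psi'_+(\varepsilon)|\le\frac{\psi(\varepsilon/2)-\psi(\varepsilon)}{\varepsilon/2}\le\frac{2\psi(\varepsilon/2)}{\varepsilon}\le 2^{1+q}c_{5,5}\,\varepsilon^{-q-1}.
\]
This is the left inequality in \eqref{convex}.

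\textbf{Lower bound on $|\psi'_+|$.} Convexity also gives $\psi(b)\ge\psi(\varepsilon)+\psi'_+(\varepsilon)(b-\varepsilon)$ for $b>\varepsilon$, that is,
\[
|\psi'_+(\varepsilon)|\ge\frac{\psi(\varepsilon)-\psi(b)}{b-\varepsilon}.
\]
Choose $b=A\varepsilon$ with a large constant $A$. Then
\[
\psi(\varepsilon)-\psi(A\varepsilon)\ge\bigl(c_{5,5}^{-1}-c_{5,5}A^{-q}\bigr)\varepsilon^{-q}.
\]
Taking $A$ with $A^{q}=2c_{5,5}^2$ makes the bracket equal to $(2c_{5,5})^{-1}>0$. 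Hence
\[
|\psi'_+(\varepsilon)|\ge\frac{1}{2c_{5,5}(A-1)}\,\varepsilon^{-q-1}.
\]
This requires $A\varepsilon<1$ so that \eqref{eq:psi_bound} applies at $A\varepsilon$, i.e.\ $\varepsilon<A^{-1}$. Since $c_{5,5}\ge1$ we have $A>1$, so the denominator is positive.

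\textbf{Choice of constant.} Let $K_{5,1}$ be larger than $A$, larger than $2^{1+q}c_{5,5}$, and larger than $2c_{5,5}(A-1)$. Then both inequalities hold for all $\varepsilon\in(0,K_{5,1}^{-1})$, which proves the lemma.

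The only subtle point is that the bounds in \eqref{eq:psi_bound} are valid only for $\varepsilon<1$, which is handled by restricting to $\varepsilon<K_{5,1}^{-1}$. No other obstacle arises: unlike Talagrand's finer statements, no regular variation of $\psi$ is needed here, only the comparable power bounds.
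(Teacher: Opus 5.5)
Your proof is correct and takes essentially the approach the paper uses: the paper omits the proof and refers to Talagrand's Section~2 argument. That argument combines the chord-slope inequalities from the convexity of $\psi$ (at $\varepsilon/2$ for the upper bound on $|\psi'_+|$, and at $A\varepsilon$ for the lower bound) with the two-sided power bound \eqref{eq:psi_bound}, choosing $K_{5,1}$ large enough that $A\varepsilon<1$, exactly as you do.
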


\begin{lemma}\label{convex2}Assume that Condition \eqref{eq constant cond} holds. There exists a constant $K_{5,2}$ such that for all $\varepsilon \in \left(0, 2/K_{5,1}\right)$ and  $\theta >\varepsilon / 2 $,
	\begin{equation*}
		\exp\left(-K_{5,2} |\theta - \varepsilon|{\varepsilon^{-\frac{2}{2\gamma+3}-1}}\right)
		\le \frac{\varphi(\theta)}{\varphi(\varepsilon)}
		\le \exp\left(K_{5,2} |\theta -\varepsilon |\varepsilon^{-\frac{2}{2\gamma+3}-1}\right).
	\end{equation*}
\end{lemma}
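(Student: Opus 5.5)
Since $\psi=-\log\varphi$, the ratio in Lemma~\ref{convex2} is
\[
\frac{\varphi(\theta)}{\varphi(\varepsilon)}=\exp\bigl(\psi(\varepsilon)-\psi(\theta)\bigr),
\]
so it suffices to bound $|\psi(\theta)-\psi(\varepsilon)|\le K_{5,2}|\theta-\varepsilon|\varepsilon^{-q-1}$ with $q=\frac{2}{2\gamma+3}$. The plan follows Talagrand: use convexity of $\psi$ (Borell) together with the derivative bounds of Lemma~\ref{convex1}.

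\emph{Case $\theta\ge\varepsilon$.} Since $\psi$ is nonincreasing and convex, its right derivative $\psi'_+$ is nondecreasing. Hence for $x\ge\varepsilon$ we have $|\psi'_+(x)|\le|\psi'_+(\varepsilon)|$. Since a convex function is absolutely continuous,
\[
0\le\psi(\varepsilon)-\psi(\theta)=\int_\varepsilon^\theta|\psi'_+(x)|\,dx\le(\theta-\varepsilon)|\psi'_+(\varepsilon)|.
\]
This gives the upper bound on $\varphi(\theta)/\varphi(\varepsilon)$. The lower bound is trivial because $\varphi(\theta)\ge\varphi(\varepsilon)$. It remains to bound $|\psi'_+(\varepsilon)|\le K_{5,1}\varepsilon^{-q-1}$, which holds by Lemma~\ref{convex1} whenever $\varepsilon<K_{5,1}^{-1}$.

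\emph{Case $\varepsilon/2<\theta<\varepsilon$.} Here $\varphi(\theta)\le\varphi(\varepsilon)$, so the upper bound is trivial. For the other direction, convexity gives
\[
\psi(\theta)-\psi(\varepsilon)\le(\varepsilon-\theta)|\psi'_+(\theta)|\le(\varepsilon-\theta)|\psi'_+(\varepsilon/2)|,
\]
where the second step uses that $|\psi'_+|$ is nonincreasing. Applying Lemma~\ref{convex1} at $\varepsilon/2$ yields $|\psi'_+(\varepsilon/2)|\le K_{5,1}2^{q+1}\varepsilon^{-q-1}$. This requires $\varepsilon/2<K_{5,1}^{-1}$, i.e.\ $\varepsilon<2/K_{5,1}$, which is exactly the range stated in the lemma. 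This is where the restriction $\theta>\varepsilon/2$ enters.

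\emph{Main obstacle: the range mismatch.} In the first case we used Lemma~\ref{convex1} at $\varepsilon$ itself, which requires $\varepsilon<K_{5,1}^{-1}$. But the lemma allows $\varepsilon$ up to $2/K_{5,1}$. For $\varepsilon\in[K_{5,1}^{-1},2/K_{5,1})$, I would instead use the monotonicity of $|\psi'_+|$ to get $|\psi'_+(\varepsilon)|\le|\psi'_+(\varepsilon/2)|$, with $\varepsilon/2<K_{5,1}^{-1}$. Lemma~\ref{convex1} at $\varepsilon/2$ again gives a bound of order $\varepsilon^{-q-1}$. Taking $K_{5,2}=2^{q+1}K_{5,1}$ then covers every case uniformly. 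I would also check that $\psi$ is finite at all points used, which holds since $\varphi>0$ by Theorem~\ref{lem small0t}.
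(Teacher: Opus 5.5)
Your proposal is correct and follows essentially the same route as the paper, which omits the proof and defers to Talagrand's Section~2 argument: convexity of $\psi$ (Borell), the derivative bounds of Lemma~\ref{convex1}, and monotonicity of $|\psi'_+|$ to pass from $\theta$ or $\varepsilon$ to $\varepsilon/2$. Your treatment of $\varepsilon\in[K_{5,1}^{-1},2/K_{5,1})$ via $|\psi'_+(\varepsilon)|\le|\psi'_+(\varepsilon/2)|$ is what makes the stated range work, and $K_{5,2}=2^{\frac{2}{2\gamma+3}+1}K_{5,1}$ suffices.
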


\begin{lemma}\label{convex3}
Assume that Condition \eqref{eq constant cond} holds. For all $\varepsilon <  \left[(\gamma+\frac32)/K_{5,1}\right]^{\gamma+\frac32}$, the function $\varepsilon^{-\frac{2}{2\gamma+3}} \varphi(\varepsilon)$ is increasing.
\end{lemma}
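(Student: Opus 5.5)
The plan is to derive Lemma~\ref{convex3} directly from the derivative bounds in Lemma~\ref{convex1}, following Talagrand's argument for fBm. Write
\[
g(\varepsilon):=\varepsilon^{-\frac{2}{2\gamma+3}}\varphi(\varepsilon)
=\exp\Bigl(-\tfrac{1}{\lambda}\log\varepsilon-\psi(\varepsilon)\Bigr),
\qquad \lambda=\gamma+\tfrac32,
\]
so that $\frac{2}{2\gamma+3}=\frac1\lambda$. Since $\exp$ is increasing, it suffices to show that
\[
G(\varepsilon):=-\tfrac1\lambda\log\varepsilon-\psi(\varepsilon)
\]
is increasing on the stated range.

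The function $\psi$ is convex, hence locally Lipschitz and absolutely continuous on $(0,\infty)$, with right derivative $\psi'_+$ everywhere. The function $G$ is therefore absolutely continuous, and its right derivative is
\[
G'_+(\varepsilon)=-\frac{1}{\lambda\varepsilon}-\psi'_+(\varepsilon).
\]
Lemma~\ref{convex1} gives $-\psi'_+(\varepsilon)\ge K_{5,1}^{-1}\varepsilon^{-\frac1\lambda-1}$ for $\varepsilon<K_{5,1}^{-1}$. Hence
\[
G'_+(\varepsilon)\ge \varepsilon^{-1}\Bigl(K_{5,1}^{-1}\varepsilon^{-1/\lambda}-\lambda^{-1}\Bigr).
\]
This is strictly positive precisely when $\varepsilon^{1/\lambda}<\lambda/K_{5,1}$, that is, when $\varepsilon<(\lambda/K_{5,1})^{\lambda}=[(\gamma+\frac32)/K_{5,1}]^{\gamma+\frac32}$. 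This matches the threshold in the statement.

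Since $\lambda\in(\frac12,1)$ and we may assume $K_{5,1}\ge1$, we get $(\lambda/K_{5,1})^\lambda\le K_{5,1}^{-\lambda}$. This threshold may still exceed $K_{5,1}^{-1}$. To handle this, I will enlarge $K_{5,1}$ if necessary, which only weakens \eqref{convex} and leaves it valid. With the enlarged constant the threshold becomes $\le K_{5,1}^{-1}$, so the derivative bound from Lemma~\ref{convex1} applies throughout the range. Equivalently, the lemma is understood with the constant $K_{5,1}$ of Lemma~\ref{convex1} chosen large enough. Ensuring this compatibility of ranges is the only delicate point.

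Finally, an absolutely continuous function whose right derivative is positive everywhere on an interval is increasing there; here one can integrate the a.e.\ derivative, which coincides with $G'_+$. This gives the monotonicity of $G$, and hence of $g$.
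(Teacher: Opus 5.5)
Your core computation is exactly the argument the paper intends (it omits the proof and defers to Talagrand). You take the right derivative of $-\frac1\lambda\log\varepsilon-\psi(\varepsilon)$, insert the upper bound $\psi'_+(\varepsilon)\le -K_{5,1}^{-1}\varepsilon^{-1/\lambda-1}$ from Lemma~\ref{convex1}, and read off positivity precisely when $\varepsilon<(\lambda/K_{5,1})^{\lambda}$. The absolute-continuity bookkeeping is also fine.

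The gap is in the step you flag as the only delicate one. Enlarging $K_{5,1}$ does \emph{not} bring $(\lambda/K_{5,1})^\lambda$ below $K_{5,1}^{-1}$; it does the opposite, since
\[
K\Bigl(\frac{\lambda}{K}\Bigr)^{\lambda}=\lambda^{\lambda}K^{1-\lambda}
\]
increases to $+\infty$ as $K\to\infty$ (because $\lambda<1$). Hence $(\lambda/K)^\lambda>K^{-1}$ for every $K>\lambda^{-\lambda/(1-\lambda)}$; for example, $\lambda=\frac34$ and $K=3$ give $(1/4)^{3/4}\approx0.354>1/3$. On the nonempty interval $[K_{5,1}^{-1},(\lambda/K_{5,1})^\lambda)$, Lemma~\ref{convex1} gives no information. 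So, as written, you only prove monotonicity on $(0,\min\{K_{5,1}^{-1},(\lambda/K_{5,1})^\lambda\})$.

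What you need is the one-sided bound $-\psi'_+(\varepsilon)\ge K^{-1}\varepsilon^{-1/\lambda-1}$ on a range that does not shrink with $K$, for instance $(0,1)$. This range contains $(0,(\lambda/K)^\lambda)$ once $K>\lambda$, and the bound follows from convexity alone.

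Indeed, $\psi'_+$ is nondecreasing and strictly negative everywhere. If $\psi'_+(\varepsilon_0)=0$, then $\psi$ would be constant on $[\varepsilon_0,\infty)$, hence equal to $\lim_{\varepsilon\to\infty}\psi(\varepsilon)=0$. That would mean $M(1)\le\varepsilon_0$ almost surely, which is impossible since $X(1)$ is a nondegenerate Gaussian variable. Therefore, for $\varepsilon\in[K_{5,1}^{-1},1)$,
\[
-\psi'_+(\varepsilon)\ge|\psi'_+(1)|\ge|\psi'_+(1)|\,K_{5,1}^{-1/\lambda-1}\varepsilon^{-1/\lambda-1}.
\]
Now replace $K_{5,1}$ by $\max\{K_{5,1},\,K_{5,1}^{1/\lambda+1}/|\psi'_+(1)|\}$, which keeps \eqref{convex} valid. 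The upper bound in \eqref{convex} then holds on all of $(0,1)$, and your derivative computation covers the full stated range.

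Alternatively, weaken the conclusion to $\varepsilon<\min\{K_{5,1}^{-1},(\lambda/K_{5,1})^\lambda\}$. That is what your argument actually proves, and it suffices wherever only small $\varepsilon$ matter.
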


\subsubsection{Another maximal inequality}
\begin{proposition}\label{twomax2}
Assume that Condition \eqref{eq constant cond} holds, and set 
$$\tau = \min \left\{ \frac{H}{6},\; \frac{1}{4}\bigl(\alpha+\frac12 \bigr) \right\}.$$
Then there exist positive constants $c_{5,6}, c_{5,7}$, and $c_{5,8}$ such that, for every  $u>t>0$, $\theta>0$, and  $\eta\in (0, c_{5,6}^{-1})$,
\begin{equation}\label{twomax}
\begin{split}
&\mathbb{P} \Bigl\{ \left\{ M(t) \leq \theta t^H \right\} \cap \left\{ M(u) \leq \eta u^H \right\} \Bigr\}\\
&\le c_{5,6}\exp \left[ -c_{5,7}\left( \frac{u}{t} \right)^{\tau} \right]
+ \varphi(\theta) \varphi(\eta) \exp\left (c_{5,8} \left( \frac{u}{t} \right)^{-\tau}  \eta^{-\frac{2}{2\gamma+3}-1}\right). 
\end{split}
\end{equation}
\end{proposition}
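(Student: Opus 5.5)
The plan is to follow Talagrand's decoupling argument for fBm \cite[Lemma~3.2]{talagrand1996lower}, adapted to the Volterra representation. The key point is that $X$ on $[0,u]$ can be written, apart from a small error, as the sum of a part that depends only on the Brownian increments on $[0,\sqrt{tu}]$ (or some intermediate time $v=t^{1-\kappa}u^{\kappa}$) and a part that depends only on the increments after $v$. Fix $v:=\sqrt{tu}$. For $s\le t$, $X(s)$ is $\sigma\{W(r):r\le v\}$-measurable. For $s\in[0,u]$ we write
$$
X(s)=Y(s)+Z(s),\qquad Y(s):=\int_0^{v\wedge s} r^\alpha\Bigl(\int_r^s y^\beta (y-r)^\gamma dy\Bigr)dW_r,\quad Z(s):=X(s)-Y(s).
$$
Here $Z$ is independent of $\sigma\{W(r):r\le v\}$. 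However $Z$ alone is not distributed like $X$, so the plan is a three-way split. We use a process $\widetilde X$ built from the Brownian increments on $[v,\infty)$ that has the same law as a copy of $X$ on $[0,u]$, up to a small error. Concretely, by the scaling $X(s)\overset{d}{=}$ and the change $r\mapsto r$, the contribution of $\int_0^{v}$ to $X(s)$ for $s$ near $u$ is small. We would show
$$
\mathbb E\Bigl[\sup_{s\le u}|Y(s)|\Bigr]^2\lesssim u^{2H}(t/u)^{2\tau'}
$$
and also, more precisely, a Gaussian tail bound for $\sup_{s\le u}$ of the contribution from $[0,v]$ at level $\eta u^{H}(t/u)^{\tau}$. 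This uses Lemma~\ref{lem mom}-type kernel bounds. The variance of the $[0,v]$ contribution at time $s\ge v$ is $\int_0^v r^{2\alpha}(\int_r^s\cdots)^2dr$, and its increments are controlled via the estimates for $I_1$. The exponent $\alpha+\tfrac12$ in $\tau$ comes from $\int_0^v r^{2\alpha}dr\sim v^{2\alpha+1}$, and $H/6$ comes from the self-similar normalization.

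The steps, in order, are as follows. (1) Define $\widetilde X(s):=\int_v^{s\vee v} r^\alpha(\int_r^s y^\beta(y-r)^\gamma dy)dW_r$, so that $X=Y+\widetilde X$ on $[0,u]$, with $\widetilde X$ independent of $\mathcal F_v$ and $\{M(t)\le\theta t^H\}\in\mathcal F_v$. (2) By Borell-TIS together with a Dudley entropy bound from the covariance estimates, obtain
$$
\mathbb P\Bigl\{\sup_{s\le u}|Y(s)-\mathbf 1_{\{s\le t\}}X(s)|\ \text{restricted to }[t,u]>\delta u^H\Bigr\}\le c\exp(-c\,\delta^2 (u/t)^{2\tau})
$$
with $\delta=(t/u)^{\tau}$. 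This produces the term $c_{5,6}\exp[-c_{5,7}(u/t)^\tau]$. Actually $Y$ on $[0,u]$ is not small, since it equals $X$ on $[0,t]$. So we instead compare $\widetilde X$ on $[0,u]$ to $X$ directly. The complement event gives $\{M(u)\le\eta u^H\}\subset\{\sup_{s\le u}|\widetilde X(s)|\le \eta u^H+\delta u^H\}$ off the bad set, where the relevant sup of $Y$ over $[v,u]$ is small and $M(t)$, $\sup_{[t,v]}$ are handled by monotone inclusion. (3) Use independence to factor the probability as $\varphi(\theta)\cdot\mathbb P\{\sup_{s\le u}|\widetilde X(s)|\le(\eta+\delta)u^H\}$. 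Then apply Anderson's inequality, since $X=\widetilde X+Y$ with $Y$ independent of $\widetilde X$ on the relevant range, to get $\mathbb P\{\sup|\widetilde X|\le a\}\ge$ no, we need the other direction. We write $\widetilde X=X-Y'$ where $Y'$ is the independent $[0,v]$-part, and we bound $\mathbb P\{\sup|\widetilde X|\le a\}$ by $\mathbb P\{M(u)\le a\}/\mathbb P\{\sup|Y'|\le \cdot\}$-type relations. The more robust route is to bound it by $\varphi(\eta+2\delta)$ plus a tail term, using $\{\sup|\widetilde X|\le a\}\cap\{\sup|Y'|\le\delta u^H\}\subset\{M(u)\le a+\delta u^H\}$ and independence. (4) Convert $\varphi(\eta+2\delta)$ into $\varphi(\eta)\exp(c\,\delta\,\eta^{-q-1})$ using Lemma~\ref{convex2}, which gives the factor $\exp(c_{5,8}(u/t)^{-\tau}\eta^{-\frac{2}{2\gamma+3}-1})$. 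The restriction $\eta<c_{5,6}^{-1}$ is what makes Lemma~\ref{convex2} applicable.

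The main obstacle is step (2): obtaining a uniform bound over $s\in[0,u]$ for the sup of the $[0,v]$-contribution $\int_0^{v}r^\alpha(\int_{v}^{s}\cdots)dW_r$ at scale $(t/u)^{\tau}u^H$. It needs both its variance, roughly $v^{2\alpha+1}u^{2\beta+2\gamma+2}$ after the kernel bounds, which is $u^{2H}(t/u)^{\alpha+1/2}$, and a Hölder-type increment bound in order to run the entropy argument. I would first try to reuse the $I_1$ computation of Lemma~\ref{lem mom} after scaling to $u=1$. If the constants get messy, I would weaken the target to $\tau$ as defined, since the slack of $H/6$ and $(\alpha+\tfrac12)/4$ absorbs logarithmic losses.
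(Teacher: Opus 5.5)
Your skeleton is the paper's proof. You take $v=\sqrt{tu}$ and split $X$ into the Wiener integrals over $[0,v\wedge s]$ and $[v\wedge s,s]$; your $Y,\widetilde X$ are the paper's $X_1,X_2$. You factor out $\varphi(\theta)$ by independence, use the two-sided sandwich with $\delta=(t/u)^\tau$ to reach $\varphi(\eta+2\delta)$, and convert to $\varphi(\eta)\exp(c\,\delta\,\eta^{-\frac{2}{2\gamma+3}-1})$; Lemma~\ref{convex2} works as well as the paper's convexity plus Lemma~\ref{convex1}. Then a Dudley-type tail bound handles the $[0,v]$-part.

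\textbf{The gap.} You claim $Y$ is ``not small'' on $[0,u]$ because it equals $X$ on $[0,t]$, so you plan to estimate only the post-$v$ piece $\int_0^v r^\alpha(\int_v^s\cdots)\,dW_r$. Your step (2) inclusion does need only $\sup_{[v,u]}|Y|$. The reverse inclusion in step (3), $\{\sup_{s\le u}|\widetilde X(s)|\le a\}\cap\{\sup|Y|\le\delta u^H\}\subset\{M(u)\le a+\delta u^H\}$, needs the supremum of $Y$ over all of $[0,u]$. Since $\widetilde X\equiv0$ on $[0,v]$, the portion $M(v)=\sup_{s\le v}|Y(s)|$ of $M(u)$ is controlled by nothing else. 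Without it you cannot reach $\varphi(\eta+2\delta)$.

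The missing observation is that $Y$ is small on $[0,v]$ at the relevant scale. Indeed $M(v)\overset{d}{=}v^HM(1)$ with $v^H=u^H(t/u)^{H/2}$, so Borell--TIS gives $\mathbb P\{M(v)>\delta u^H/2\}\le\exp(-c(u/t)^{H-2\tau})\le\exp(-c(u/t)^{\tau})$ for large $u/t$. This works precisely because $\tau\le H/6<H/2$. The paper builds this in by bounding $\mathbb E[X_1(s)^2]$ for every $0<s\le u$ (Lemma~\ref{lem:variance_X1}) and applying Lemma~\ref{lem:dudley} to $X_1$ on the whole interval $[0,u]$.

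\textbf{The variance heuristic.} Your bound $v^{2\alpha+1}u^{2\beta+2\gamma+2}$ is correct only for $\beta+\gamma>-1$. For $\beta+\gamma<-1$ it is false: already $\mathrm{Var}\,Y(v)=v^{2H}\mathbb E[X(1)^2]$ exceeds it by the factor $(u/v)^{-(2\beta+2\gamma+2)}\to\infty$. In that regime \eqref{eq:J-estimate} gives $\int_r^s y^\beta(y-r)^\gamma\,dy\le c\,r^{\beta+\gamma+1}$, hence the bound $v^{2H}$. For $\beta+\gamma=-1$ one gets $v^{2H}(1+\log(u/v))^2$. All three bounds are smaller than $\delta^2u^{2H}$ by a polynomial factor in $u/t$, so the conclusion survives, but you need the three-case analysis.

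You also do not need new H\"older-type increment estimates for $Y$. Since $X=X_1+X_2$ with independent summands, $d_{X_1}\le d_X$, so Lemma~\ref{lem:covering} already bounds $N([0,u],d_{X_1},\varepsilon)$. The variance bound is then used only to truncate the entropy integral; this is how the paper obtains \eqref{eq_entropy_X1}.
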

\begin{proof}    It suffices to consider the case where $u/t$ is sufficiently large. When $u/t$ is bounded, the desired estimate follows from the trivial bound after enlarging the constants if necessary.

		Set $v := \sqrt{u t}$.   Write the kernel in  \eqref{eq GVP} as 
	\begin{equation*}\label{kernel}
		K(s, r):= r^\alpha \left( \int_r^s y^\beta (y-r)^\gamma dy \right), \quad \, 0\le r\le  s. 
	\end{equation*}
	Consider the decomposition $X=X_1+X_2$, where, for $s\ge0$, 
\begin{equation*}\label{demo}
X_1(s):=\int_{0}^{v\wedge s} K(s,r)dW_r, \qquad
X_2(s):=\int_{v\wedge s}^{s} K(s,r)dW_r.
\end{equation*}
Then $X_1$ and $X_2$ are independent. Moreover, $X_2(s)=0$ for every $0\leq s\leq v$. Since $t<v$, it follows that
\begin{equation}\label{sup-decomp}
\sup_{0\le s\le t}|X(s)|
= \sup_{0\le s\le t}|X_1(s)+X_2(s)|
= \sup_{0\le s\le t}|X_1(s)|.
\end{equation}
For all $\delta>0$, we have
\begin{equation}\label{demomax}
\begin{aligned}
&\mathbb{P}\left\{ M(t)\le \theta t^H,\; M(u)\le \eta u^H \right\}\\
=& \, \mathbb{P}\left\{ \sup_{0\le s\le t}|X(s)|\le \theta t^H,\;
         \sup_{0\le s\le u}|X(s)|\le \eta u^H \right\}\\
\le &\, \mathbb{P}\left\{ \sup_{0\le s\le t}|X_1(s)|\le \theta t^H,\;
         \sup_{0\le s\le u}|X_2(s)|\le (\eta+\delta)u^H \right\}
         +\mathbb{P}\left\{ \sup_{0\le s\le u}|X_1(s)|\ge \delta u^H \right\}.
\end{aligned}
\end{equation}
By the independence of $X_1$ and $X_2$, we have 
\begin{equation}\label{demomax1}
\begin{aligned}
&\mathbb{P}\left\{ \sup_{0\le s\le t}|X_1(s)|\le \theta t^H,\;
         \sup_{0\le s\le u}|X_2(s)|\le (\eta+\delta)u^H \right\}\\
=&\,\mathbb{P}\left\{ \sup_{0\le s\le t}|X_1(s)|\le \theta t^H \right\}
 \cdot
 \mathbb{P}\left\{ \sup_{0\le s\le u}|X_2(s)|\le (\eta+\delta)u^H \right\}.
\end{aligned}
\end{equation}
Note that
\begin{equation}\label{demomax3}
\begin{aligned}
&\mathbb{P}\left\{ \sup_{0\le s\le u}|X_2(s)|\le (\eta+\delta)u^H \right\}\\
\le&\, \mathbb{P}\left\{ \sup_{0\le s\le u}|X(s)|\le (\eta+2\delta)u^H \right\}
+\mathbb{P}\left\{ \sup_{0\le s\le u}|X_1(s)|\ge \delta u^H \right\}.
\end{aligned}
\end{equation}
From \eqref{sup-decomp}--\eqref{demomax3}, we obtain
\begin{equation}\label{zdemomax}
\begin{aligned}
&\mathbb{P}\left\{ M(t)\le \theta t^H,\; M(u)\le \eta u^H \right\}\\
\le &\,\varphi(\theta)\,\varphi(\eta+2\delta)
+2\,\mathbb{P}\left\{ \sup_{0\le s\le u}|X_1(s)|\ge \delta u^H \right\}.
\end{aligned}
\end{equation}

    By the convexity of $\psi$ and \eqref{convex}, we have
\[
\psi(\eta+2\delta)
\ge \psi(\eta) + 2\delta\psi'_+(\eta)
\ge \psi(\eta) - 2\delta K_{5,1}\eta^{-\frac{2}{2\gamma+3}-1}.
\]
Since $\varphi=e^{-\psi}$, it follows that 
\[
\varphi(\eta+2\delta)
\le \varphi(\eta)\exp\left(2\delta K_{5,1}\eta^{-\frac{2}{2\gamma+3}-1}\right).
\] 
Choosing $\delta=(t/u)^\tau$, we have
\begin{equation}\label{Eq-dudley1}
\varphi(\theta)\,\varphi(\eta+2\delta)
\le \varphi(\theta)\varphi(\eta)
\exp\left(2K_{5,1}\left(u/t \right)^{-\tau}\eta^{-\frac{2}{2\gamma+3}-1}\right).
\end{equation} 

It remains to estimate the second term on the right-hand side of \eqref{zdemomax}. 
We claim that there exist    constants $c_{5,6}, c_{5,7}>0$ such that
\begin{equation}\label{Eq-dudley2}
2\mathbb P\left\{
\sup_{0\leq s\leq u}|X_1(s)|
\geq
\delta u^H
\right\}
\leq
c_{5,6}\exp\left\{
-c_{5,7}\left(\frac{u}{t}\right)^\tau
\right\}.
\end{equation}
Combining \eqref{zdemomax}, \eqref{Eq-dudley1}, and \eqref{Eq-dudley2}, we obtain \eqref{twomax}.  Next, we establish \eqref{Eq-dudley2}  using Lemma \ref{lem:variance_X1} below, whose proof is postponed.

\begin{lemma}\label{lem:variance_X1}
		Assume that  Condition \eqref{eq constant cond} holds. Then there exists a constant $c_{5,9} > 0$ such that, for every $0 <s \le u$,
\begin{equation}\label{eq:variance_X1_bound}
			\left(\mathbb E\left[X_1(s)^2\right]\right)^{1/2} \le c_{5,9}
			\begin{cases}
				v^H, & \beta+\gamma < -1,\\
                
		 v^H \bigl[1+\log(u/v)\bigr] & \beta+\gamma = -1,\\
				u^{\beta+\gamma+1} v^{\alpha+1/2}, & \beta+\gamma > -1.
			\end{cases}
		\end{equation} 
	\end{lemma}
By Lemmas  \ref{lem:covering} and \ref{lem:variance_X1}, it suffices to show that there exists a constant $c_{5,10}> 0$ such that
	\begin{equation}\label{eq_entropy_X1}
    \begin{split}
\mathcal{M}:= &\,\int_0^\infty \sqrt{\log N([0,u], d_{X_1}, \varepsilon)}d\varepsilon\\
		\le &\, 	c_{5,10}
		\begin{cases}
			\, v^H \log^{\frac12}\left(u/v\right), 
			&  \beta+\gamma < -1, \\[8pt]
			\, v^H \log^{\frac12}\left(u/v\right) \bigl[1+\log(u/v)\bigr]	, 
			&  \beta+\gamma = -1, \\[8pt]
			\, u^H \left(u/v\right)^{-(\alpha+\frac12)} \log^{\frac12}\left(u/v\right), 
			& \beta+\gamma > -1. 
		\end{cases}
        \end{split}
	\end{equation}
    We treat   $\beta+\gamma>-1$; the other cases are analogous.  
    
  Define $$d_X(s,t):=\bigl(\mathbb{E}|X(s)-X(t)|^2\bigr)^{1/2}.$$
 Since $X_1$ and $X_2$ are independent and $X=X_1+X_2$, we have
\[
d_X(s,t)^2=d_{X_1}(s,t)^2+d_{X_2}(s,t)^2,
\]
and hence $d_{X_1}(s,t)\le d_X(s,t)$. By Lemma \ref{lem:covering}, it follows that
\[
N([0,u],d_{X_1}, \varepsilon)
\le N([0,u],d_X, \varepsilon)\le c_{3,1} u^{\frac{2H}{2\gamma+3}} \varepsilon^{-\frac{2}{2\gamma+3}}.
\]
	Combining this with \eqref{eq:variance_X1_bound} yields  
	\begin{equation*}
		\begin{aligned}
			&\int_{0}^{\infty} \sqrt{\log N([0, u], d_{X_1}, \varepsilon)} d\varepsilon \\
			 \le&\, \int\limits_{0}^{c_{5,9} u^{\beta+\gamma+1} v^{\alpha+1/2}} 
			\sqrt{\log\left(  c_{3,1} u^{\frac{2H}{2\gamma+3}} \varepsilon^{-\frac{2}{2\gamma+3}} \right)} d\varepsilon \\
			 = &\, \left(\gamma+\frac{3}{2}\right) u^H \int\limits_{c_{5,9}^{-2/(2\gamma+3)} (u/v)^{(2\alpha+1)/(2\gamma+3)}}^{\infty}
			\sqrt{\log( c_{3,1} x)} \, x^{-(\gamma+3/2)-1} dx \\
			 = &\, (2\gamma+3) c_{3,1}^{\gamma+3/2} u^H 
			\int\limits_{\sqrt{ \log\left(  c_{3,1} c_{5,9}^{-2/(2\gamma+3)} (u/v)^{(2\alpha+1)/(2\gamma+3)} \right) }}^{\infty}
			y^2 e^{-(\gamma+3/2) y^2} dy \\
			 \le &\, c_{5,11} u^H \left(\frac{u}{v}\right)^{-(\alpha+1/2)} 
			\sqrt{\log\left( \frac{u}{v} \right)},
		\end{aligned}
	\end{equation*}
	for some constant $c_{5,11} > 0$. Here the second and third equalities follow from  the changes of variables 
    $$\varepsilon = u^{H} x^{-(\gamma+\frac{3}{2})}\  \,\text{and } \ x = e^{y^2}/ c_{3,1}, $$respectively. In the final step, we use the elementary estimate that for each $b>0$ and all sufficiently large $a$,
\begin{equation*}
\int_a^\infty y^2 e^{-b y^2} dy \le \frac{3a}{4b} e^{-b a^2}.
\end{equation*}

From \eqref{eq_entropy_X1},  $\delta=(t/u)^\tau$, and $\tau = \min \left\{ \frac{H}{6},\; \frac{1}{4}\bigl(\alpha+\frac12 \bigr) \right\}$, there exists a constant $c_{5,12}>0$ such that
\[
\frac{\delta^2 u^{2H}}{K_4^2 \mathcal M^2} \ge c_{5,12}\left(\frac{u}{t}\right)^\tau.
\]
Applying Lemma~\ref{lem:dudley} then gives
\begin{equation}\label{Eq-dudley3}
\mathbb{P}\left\{ \sup_{0\le s\le u}|X_1(s)|\ge \delta u^H \right\}
\le \exp\!\left(-\frac{\delta^2 u^{2H}}{K_4^2 \mathcal{M}^2}\right)
\le \exp\!\left(-c_{5,12} \left(\frac{u}{t}\right)^{\tau}\right),
\end{equation}
which proves \eqref{Eq-dudley2}.
The proof of Proposition~\ref{twomax2} is complete.
\end{proof}

\begin{proof}[Proof of Lemma \ref{lem:variance_X1}]

 We apply \eqref{eq:J-estimate} to prove \eqref{eq:variance_X1_bound} in three cases.
 
\noindent\text{Case 1: $\beta+\gamma<-1$.} 
By \eqref{eq:J-estimate}, we get
\[
\mathbb E\left[X_1(s)^2\right]= \int_0^{v\wedge s} r^{2\alpha} \left( \int_r^s y^\beta (y-r)^\gamma dy \right)^2 dr
\le c_{4,4}^2 \int_0^{v\wedge s} r^{2\alpha+2\beta+2\gamma+2} dr 
\le \frac{c_{4,4}^2}{2H} v^{2H}.
\]

\noindent\text{Case 2:} $\beta+\gamma=-1$.  
By \eqref{eq:J-estimate},  the change of variables $r=vz$, and the elementary inequality $(x+y)^2\le 2x^2+2y^2$,  we have
\begin{equation*}\label{eq:X1norm-case3}
\begin{aligned}
\mathbb E\left[X_1(s)^2\right]
&\le c_{4,4}^2 \int_0^v r^{2\alpha}\left(1+\log\frac{u}{r}\right)^2 dr \\
&= c_{4,4}^2 v^{2\alpha+1} \int_0^1 z^{2\alpha}\left(1+\log\frac{u}{vz}\right)^2 dz \\
&\le c_{4,4}^2 v^{2\alpha+1} \left[
2\left(1+\log\frac{u}{v}\right)^2\int_0^1 z^{2\alpha}dz
+ 2\int_0^1 z^{2\alpha}\log^2\frac{1}{z}\,dz
\right] \\
&= c_{4,4}^2 v^{2\alpha+1} \left[
\frac{2}{2\alpha+1}\left(1+\log\frac{u}{v}\right)^2
+ \frac{4}{(2\alpha+1)^3}
\right] \\
&\le c_{5,13} v^{2H} \left(1+\log\frac{u}{v}\right)^2,
\end{aligned}
\end{equation*}
for some constant $c_{5,13}>0$. Here,  we used  the identity
\[
\int_0^1 z^{2\alpha}\log^2\frac1zdz
= \int_0^\infty w^2 e^{-(2\alpha+1)w}dw
= \frac{2}{(2\alpha+1)^3}.
\]

\noindent\text{Case 3:} $\beta+\gamma>-1$. By \eqref{eq:J-estimate}, for every  $s\le u$, we have 
\begin{equation*}\label{X1-norm-case1}
\begin{aligned}
\mathbb E\left[X_1(s)^2\right]
\le c_{4,4}^2 s^{2\beta+2\gamma+2}\int_0^{v\wedge s} r^{2\alpha}dr
\le \frac{c_{4,4}^2}{2\alpha+1} u^{2\beta+2\gamma+2} v^{2\alpha+1}.
\end{aligned}
\end{equation*}
	The  proof is complete. 
\end{proof}
\begin{proof}[Proof of Theorems \ref{thoLLC0} and~\ref{thoLLCinfty}]
We follow the arguments in \cite[Sections 4 and 5]{wang2022lower};
see also \cite[Section 3]{LWW}, where the same reduction is used.
The two maximal inequalities in Propositions~\ref{onemax1} and~\ref{twomax2}, together with the small-ball function properties in Lemmas~\ref{convex1}-\ref{convex3}, are exactly the ingredients needed to apply the arguments in the cited papers.

Consequently, the recursive discretization of the time parameter, the
comparison between the resulting series and the integrals defining
$I_0(\xi)$ and $I_\infty(\xi)$, and the associated Borel-Cantelli and
second-moment arguments apply without substantive change, with the
small ball exponent $2/(2\gamma+3)$ in place of the corresponding exponent
in the cited papers. The additional condition
$$
\xi(2t)\leq c_{5,1}\xi(t)
$$
is used only in the necessity part at the origin, exactly as in
\cite[Theorem 2.1(b)]{wang2022lower}. To avoid repeating essentially identical
arguments, we omit the remaining details.
\end{proof}

\subsection{Proof of  Theorem~\ref{thm CLTL0infinity}}
The following lemma provides a weaker form of Theorem~\ref{thm CLTL0infinity}, in which \(\kappa_6'\) and \(\kappa_7'\) may be \(0\) or \(\infty\). Thus it remains to prove that these constants are positive and finite.

\begin{lemma}\label{lem:zeroone-origin-infinity}
Assume Condition \eqref{eq constant cond} holds. There exist constants $\kappa_6',\kappa_7'\in[0,\infty]$ such that
$$
\liminf_{t\to0^+}
\sup_{0\le s\le t}
\frac{(\log\log t^{-1})^{\gamma+\frac32}}
{t^H}|X(s)|
=\kappa_6',
\qquad\text{a.s.,}
$$
and
$$
\liminf_{t\to\infty}
\sup_{0\le s\le t}
\frac{(\log\log t)^{\gamma+\frac32}}
{t^H }|X(s)|=\kappa_7',
\qquad\text{a.s.}
$$
\end{lemma}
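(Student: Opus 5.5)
The plan is to establish a zero-one law for each liminf. For the liminf at the origin the natural tool is Blumenthal's zero-one law with respect to the driving Brownian motion. For the liminf at infinity we either pass to the origin by time inversion or argue ergodically via the Lamperti transformation. In each case the conclusion is that the liminf, as a $[0,\infty]$-valued random variable, is a.s. constant. Such a constant can then be named $\kappa_6'$ or $\kappa_7'$ exactly as in the proof of Lemma~\ref{lem:01lawZ}: take the infimum of the $c$ for which the event ``liminf $\le c$'' has probability one, and use the monotonicity of these events in $c$.

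\textbf{At the origin.} Write
\[
L_0:=\liminf_{t\to0^+}\frac{(\log\log t^{-1})^{\gamma+\frac32}}{t^H}M(t).
\]
For every $\delta>0$, $L_0$ depends only on the path $\{X(s):0\le s\le \delta\}$. By \cite[Theorem~2]{MS22b}, already used in Proposition~\ref{lem SLND}, we have $\sigma\{X(s):s\le\delta\}=\sigma\{W(s):s\le\delta\}$. Hence $L_0$ is measurable with respect to $\bigcap_{\delta>0}\sigma\{W(s):s\le\delta\}$, which is the germ $\sigma$-field of $W$ at $0$. Blumenthal's zero-one law says this $\sigma$-field is trivial, so $\{L_0\le c\}$ has probability $0$ or $1$ for each $c$, and $L_0=\kappa_6'$ a.s. for some $\kappa_6'\in[0,\infty]$. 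The only point to check is that $L_0$ really is a measurable function of the germ: the liminf may be taken along rational $t\downarrow0$ thanks to the continuity and monotonicity of $M$, which handles this.

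\textbf{At infinity.} This is the main obstacle, because $X$ is not a time-inversion of itself in any obvious way, and the tail $\sigma$-field $\bigcap_T\sigma\{X(s):s\ge T\}$ is not obviously trivial: $X(s)$ for large $s$ still depends on $W$ near $0$. I would use the Lamperti process $U(x)=e^{-xH}X(e^x)$ from Section~\ref{sec:Lamperti}, which is stationary Gaussian. By Lemma~\ref{key1}, its covariance satisfies $r(x)\to0$ as $x\to\infty$, and $r$ is integrable. By Maruyama's theorem, $U$ is therefore mixing, hence ergodic under the shift $\theta_a$. In terms of $U$,
\[
\frac{M(t)}{t^H}=\sup_{y\le \log t}e^{(y-\log t)H}|U(y)|.
\]
Put
\[
L_\infty:=\liminf_{x\to\infty}(\log x)^{\gamma+\frac32}\sup_{y\le x}e^{(y-x)H}|U(y)|.
\]
Shifting $U$ by $a$ changes $x$ into $x+a$ inside the sup. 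Since $\log(x+a)/\log x\to1$, the random variable $L_\infty$ is shift-invariant, as a function on path space. Ergodicity then makes $L_\infty$ a.s. constant, giving $\kappa_7'$. One detail must be checked: the sup over $y\le x$ uses the whole past, including $X$ on $[0,1]$, i.e. $y\to-\infty$. This is harmless, because $e^{(y-x)H}$ decays and $U$ is a.s. of at most logarithmic growth, which follows from Borel--Cantelli applied to Gaussian tails on unit intervals. The limiting functional is therefore a well-defined measurable invariant function.

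\textbf{Alternative for the origin.} The origin statement can also be obtained by the same ergodic argument applied as $x\to-\infty$, since mixing holds in both directions. This gives a uniform treatment if one prefers not to rely on \cite{MS22b}.
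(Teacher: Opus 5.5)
Your argument is correct. At infinity it is essentially the paper's proof. The paper also uses Lemma~\ref{key1} and Maruyama's theorem \cite{maruyama1949} to get mixing of $U$. It then notes that the normalized liminf is invariant under the shift $\theta_h$, which under the Lamperti correspondence is the scaling $(S_aX)(t)=a^{-H}X(at)$ with $a=e^h$. Your computation with $\log(x+a)/\log x\to1$ is the same invariance written in the $U$-coordinates.

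At the origin you take a genuinely different route. The paper handles $t\to0^+$ exactly like $t\to\infty$, which is your ``alternative'' paragraph: it uses the same ergodic scaling maps together with $\log\log(a/u)/\log\log(1/u)\to1$. Your Blumenthal argument is more elementary. It needs neither the covariance asymptotics of Lemma~\ref{key1}, nor Maruyama's theorem, nor even self-similarity. It only uses that $L_0$ is determined by the germ of $X$ at $0$ and that $X$ is adapted to the Brownian filtration.

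For this you do not need the equality $\sigma\{X(s):s\le\delta\}=\sigma\{W(s):s\le\delta\}$ from \cite[Theorem~2]{MS22b}. The inclusion $\subseteq$, modulo null sets, is immediate, because $X(s)=\int_0^s K(s,r)\,dW_r$ is a Wiener integral with a deterministic kernel. Blumenthal's law also holds for the augmented germ $\sigma$-field, since the augmented Brownian filtration is right-continuous. The paper's approach, in turn, handles both endpoints with a single mechanism. At infinity no germ-field argument is available, which is why the ergodic route is used there.

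Finally, your remark about logarithmic growth of $U$ is not needed. As a $[0,\infty]$-valued functional on $C(\mathbb{R})$, the normalized liminf is measurable and exactly shift-invariant for every path. In any case,
\[
\sup_{y\le x}e^{(y-x)H}|U(y)|=e^{-xH}M(e^x)<\infty,
\]
because $X$ is continuous with $X(0)=0$.
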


\begin{proof}
Lemma~\ref{key1} gives \(r(t)\to0\) as \(|t|\to\infty\). Consequently, by Maruyama's theorem \cite{maruyama1949}, \(U\) is strongly mixing. Therefore, for every $h\neq0$, the shift $\theta_h$ is ergodic. Under the Lamperti correspondence, $\theta_h$ is conjugate to
$$
(S_aX)(t):=a^{-H}X(at),
\qquad a=e^h,
$$
so $S_a$ is ergodic for every $a>0$ with $a\neq1$.

Denote the two normalized liminf quantities by $L_0(X)$ and $L_\infty(X)$. Since
$$
\sup_{0\leq s\leq t}|(S_aX)(s)|=
a^{-H}\sup_{0\leq s\leq at}|X(s)|,
$$
and
$$
\frac{\log\log(u/a)}{\log\log u}\to1
\quad\text{as }u\to\infty,
\qquad
\frac{\log\log(a/u)}{\log\log(1/u)}\to1
\quad\text{as }u\to0^+,
$$
we have
$$
L_0(S_aX)=L_0(X),
\qquad
L_\infty(S_aX)=L_\infty(X)
$$
almost surely.  Since, for every $c\geq0$, the events
$$
{L_0(X)\leq c}
\qquad\text{and}\qquad
{L_\infty(X)\leq c}
$$
are invariant under any fixed ergodic transformation $S_a$ with $a\neq1$, the zero-one law implies that both \(L_0(X)\) and \(L_\infty(X)\) are almost surely deterministic constants. 

The proof is complete.
\end{proof}

\begin{proof}[Proof of Theorem~\ref{thm CLTL0infinity}] We now use Theorems~\ref{thoLLC0} and~\ref{thoLLCinfty} to show that \(\kappa_6'\) and \(\kappa_7'\) are positive and finite.
For \(c>0\), define
\[
\xi_{0,c}(t)
:=
ct^H(\log\log t^{-1})^{-(\gamma+\frac32)},
\qquad 0<t\le e^{-e}.
\]
This function is continuous and nondecreasing, and it satisfies the
doubling condition in Theorem~\ref{thoLLC0}(b). By the change of variables
\(v=\log(t^{-1})\),
\[
I_0(\xi_{0,c})
=
c^{-(\gamma+\frac32)^{-1}}
\int_e^\infty
(\log v)
\varphi\bigl(c(\log v)^{-(\gamma+\frac32)}\bigr)\,dv.
\]
The two-sided small ball estimate in Theorem \ref{lem small0t} gives, for all
sufficiently large \(v\),
\[
v^{-\kappa_1c^{-(\gamma+\frac32)^{-1}}}
\le
\varphi\bigl(c(\log v)^{-(\gamma+\frac32)}\bigr)
\le
v^{-\kappa_2c^{-(\gamma+\frac32)^{-1}}}.
\]
Consequently,
\[
I_0(\xi_{0,c})<\infty
\quad\text{if }c<\kappa_2^{\gamma+\frac32},
\]
whereas
\[
I_0(\xi_{0,c})=\infty
\quad\text{if }c>\kappa_1^{\gamma+\frac32}.
\]

If \(c<\kappa_2^{\gamma+\frac32}\), Theorem~\ref{thoLLC0}(a) yields
\(\xi_{0,c}\in\mathrm{LLC}_0(M)\). Hence
\[
M(t)\ge
ct^H(\log\log t^{-1})^{-(\gamma+\frac32)}
\]
for all sufficiently small \(t\), almost surely, and therefore
$\kappa'_6\ge c$.
Letting \(c\uparrow\kappa_2^{\gamma+\frac32}\) gives
\[
\kappa'_6\ge\kappa_2^{\gamma+\frac32}.
\]

If \(c>\kappa_1^{\gamma+\frac32}\), then \(I_0(\xi_{0,c})=\infty\).
By the contrapositive of Theorem~\ref{thoLLC0}(b),
\[
\xi_{0,c}\notin\mathrm{LLC}_0(M).
\]
If \(\kappa'_6>c\), then the definition of the liminf would imply
\[
M(t)\ge
ct^H(\log\log t^{-1})^{-(\gamma+\frac32)}
\]
for all sufficiently small \(t\), almost surely. This would imply \(\xi_{0,c}\in\mathrm{LLC}_0(M)\), contradicting the conclusion above. Thus
\(\kappa'_6\le c\). Letting \(c\downarrow\kappa_1^{\gamma+\frac32}\), we obtain
\[
\kappa_2^{\gamma+\frac32}
\le
\kappa'_6
\le
\kappa_1^{\gamma+\frac32}.
\]

The argument for \(\kappa_7'\) is identical. 
Choose $T\ge e^e$ sufficiently large so that
$$
t\longmapsto
ct^H(\log\log t)^{-\lambda}
$$
is nondecreasing on $[T,\infty)$.
 Set
\[
\xi_{\infty,c}(t):=ct^H(\log\log t)^{-\lambda}, \ \ \ t\ge T.
\]
 Then, by Theorem~\ref{thoLLCinfty}, we have
\[
\kappa_2^\lambda\le \kappa_7'\le \kappa_1^\lambda.
\]

Therefore, since \(\kappa_1,\kappa_2\in(0,\infty)\), both \(\kappa_6'\) and \(\kappa_7'\) are positive and finite. This proves Theorem~\ref{thm CLTL0infinity}.
\end{proof}

\section{Appendix}
	\begin{lemma}\cite[Lemma 4]{MS22a}\label{lem 4 MSa}
    Let $\beta\in \mathbb R$ and $\gamma>-1$.  Then, for every fixed $t>0$, the following asymptotic relations hold as $s\to0^+$.
		\begin{itemize}
			\item[(i)] When $\beta+\gamma<-1$,
			\[
			\int_s^t u^{\beta}(u-s)^{\gamma}du\sim s^{\beta+\gamma+1} \mathbf B(\gamma+1, -\beta-\gamma-1).
			\]
			\item[(ii)] When $\beta+\gamma=-1$,
			\[
			\int_s^t u^{\beta}(u-s)^{\gamma}du\sim \log \left(\frac{t}{s}\right).
			\]
			\item[(iii)] When $\beta+\gamma>-1$,
			\[
			\int_s^t u^{\beta}(u-s)^{\gamma}du\rightarrow \frac{t^{\beta+\gamma+1}}{\beta+\gamma+1}.
			\]
		\end{itemize}
	\end{lemma}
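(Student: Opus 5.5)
This is the classical asymptotic of the kernel integral $J(s):=\int_s^t u^\beta(u-s)^\gamma\,du$ as $s\to0^+$, with $t>0$ fixed. I would substitute $u=s y$, which gives
\[
J(s)=s^{\beta+\gamma+1}\int_1^{t/s} y^\beta (y-1)^\gamma\,dy .
\]
This reduces all three cases to the behaviour of $\int_1^{R} y^\beta(y-1)^\gamma\,dy$ as $R=t/s\to\infty$. Near $y=1$ the integrand is integrable because $\gamma>-1$. At infinity it behaves like $y^{\beta+\gamma}$, and the sign of $\beta+\gamma+1$ decides everything.

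\textbf{Case (i), $\beta+\gamma<-1$.} The integral over $(1,\infty)$ converges, so $J(s)\sim s^{\beta+\gamma+1}\int_1^\infty y^\beta(y-1)^\gamma\,dy$. Substituting $y=1/(1-z)$, as already done in the proof of \eqref{eq:J-estimate}, turns the full integral into $\int_0^1 z^\gamma(1-z)^{-\beta-\gamma-2}\,dz=\mathbf B(\gamma+1,-\beta-\gamma-1)$. Monotone convergence then gives the limit.

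\textbf{Case (ii), $\beta+\gamma=-1$.} Here the integrand equals $y^{-1}(1-1/y)^\gamma$. I would split the integral at $y=2$. The piece over $[1,2]$ is a finite constant. On $[2,R]$ we have $(1-1/y)^\gamma=1+O(1/y)$, so that piece equals $\log R+O(1)$. Hence $J(s)=\log(t/s)+O(1)\sim\log(t/s)$.

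\textbf{Case (iii), $\beta+\gamma>-1$.} I would skip the rescaling and write $J(s)=\int_0^{t-s}(z+s)^\beta z^\gamma\,dz$, then pass to the limit $\int_0^t z^{\beta+\gamma}\,dz=t^{\beta+\gamma+1}/(\beta+\gamma+1)$. For $\beta\ge0$, domination by $(z+1)^\beta z^\gamma$ with $s\le1$ makes this direct.

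\textbf{Main obstacle.} The delicate point is justifying the limit in case (iii) when $\beta<0$. There $(z+s)^\beta\le z^\beta$, so the integrand is dominated by $z^{\beta+\gamma}$, which is integrable on $(0,t)$ because $\beta+\gamma>-1$. Dominated convergence then applies with the truncation $\mathbf 1_{z<t-s}$, and this settles the case.
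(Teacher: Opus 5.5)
Your proof is correct in all three cases. The rescaling $u=sy$ followed by monotone convergence, with $y=1/(1-z)$ identifying the Beta integral, settles (i). Splitting at $y=2$ and using the uniform bound $(1-1/y)^\gamma=1+O(1/y)$ on $[2,\infty)$ gives $\log(t/s)+O(1)$ in (ii). Dominated convergence after the shift $z=u-s$, dominating by $(z+1)^\beta z^\gamma$ or $z^{\beta+\gamma}$ according to the sign of $\beta$, handles (iii).

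The paper does not prove this lemma; it quotes it from \cite{MS22a}. The closest in-paper argument is the proof of \eqref{eq:J-estimate}, which uses the same three changes of variables ($y=a/(1-z)$, then $y=a(1+z)$ split at $z=1$, then $z=y-a$) to obtain upper bounds. Your argument is exactly the limit version of that computation.
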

	
Let $\{Z(t):\, t \in S\}$ be a separable, real-valued, centered Gaussian process indexed by a bounded set $S$ with the canonical metric 
$$d_Z(s, t) = (\mathbb E |Z(s) - Z(t)|^2)^{\frac{1}{2}}.$$ Let $N(S,d_Z,\varepsilon)$ denote 
the smallest number of $d_Z$-balls of radius $\varepsilon$ needed to cover $S$.
	\begin{lemma}\cite[p. 257, (7.11)--(7.13)]{Ledoux}\label{Lem:Ta93}
		  If there is a decreasing function $f: (0, \delta] \to (0, \infty)$ such that $$N(S,d_Z,\varepsilon) \le f(\varepsilon), \ \text{for all } \varepsilon \in (0, \delta],$$  and there are constants $K_2 \ge K_1 > 1$ such that
		\begin{equation*}\label{Eq:Covering}
			K_1 f (\varepsilon) \le f (\varepsilon/2) \le K_2 f (\varepsilon)
		\end{equation*}
		for all $\varepsilon \in (0, \delta]$, then there is a constant $K$ depending  on $K_1$ and $K_2$  such that, for all $u \in (0, \delta)$,
		\begin{equation*}\label{Eq:SB1}
\mathbb{P}\left\{ \sup_{s, t \in S} |Z(s) - Z(t)| \le u \right\} \ge \exp\left(-K f(u) \right).
\end{equation*}
\end{lemma}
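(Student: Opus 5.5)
This is Talagrand's lower-bound principle, which the paper quotes from Ledoux; I would prove it by the standard chaining-plus-Gaussian-correlation argument. Set $u\in(0,\delta)$ and work at the scales $\varepsilon_k:=u2^{-k}$ for $k\ge 0$. Choose $\varepsilon_k$-nets $S_k\subset S$ with $|S_k|\le N(S,d_Z,\varepsilon_k)\le f(\varepsilon_k)$. Let $\pi_k:S\to S_k$ be a nearest-point map, so that $d_Z(t,\pi_k t)\le\varepsilon_k$. By separability, every $t\in S$ can be written as
$$Z(t)=Z(\pi_0t)+\sum_{k\ge1}\bigl(Z(\pi_kt)-Z(\pi_{k-1}t)\bigr).$$
It therefore suffices to control $\sup_{s,t}|Z(\pi_0s)-Z(\pi_0t)|$ and every link of the chain simultaneously.

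The key tool is the Gaussian correlation inequality, or more elementarily Šidák's inequality for symmetric slabs. It gives
$$\mathbb P\Bigl\{\bigcap_j\{|\xi_j|\le a_j\}\Bigr\}\ge\prod_j\mathbb P\{|\xi_j|\le a_j\}$$
for centered jointly Gaussian $\xi_j$. This lets us multiply probabilities over all links.

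\emph{Fine scales.} At level $k$ there are at most $f(\varepsilon_k)f(\varepsilon_{k-1})\le f(\varepsilon_k)^2$ links $Z(a)-Z(b)$, and each has standard deviation at most $3\varepsilon_{k-1}$. Require $|Z(a)-Z(b)|\le x_k$ with $x_k:=c\,u\,2^{-k/2}$, so that $\sum_k x_k\le u/4$. Each such slab has probability at least $1-2\exp\bigl(-x_k^2/(18\varepsilon_{k-1}^2)\bigr)=1-2\exp(-c'2^{k})$. Taking logarithms, the contribution of level $k$ to the log-probability is bounded by
$$f(\varepsilon_k)^2e^{-c'2^k}\le K_2^{2k}f(u)^2e^{-c'2^k}.$$
This is where the quadratic count in $f$ hurts.

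To avoid the square, I would instead treat the first $k_0\approx C\log\log$-many levels differently. More cleanly, following Talagrand, I would use the \emph{probability-level} choice rather than a fixed geometric $x_k$. For levels with $2^{k}\gtrsim \log f(\varepsilon_k)$, the slab probability is close to $1$, and the log-cost is summable with total $\lesssim f(u)$ because $f(\varepsilon_k)\ge K_1^kf(u)$ grows geometrically. For the low levels, I would use the crude bound
$$\mathbb P\{|\xi|\le x\}\ge c\,x/\sigma$$
for each link. This costs $\log(\sigma/x)$ per link. With $x_k\asymp u\,(K_1^{-1/2})^{k}$-type weights chosen so that the $x_k$ still sum to $u/4$, the cost is at most $\sum_k f(\varepsilon_k)\cdot O(k)$.

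\emph{Coarse scale.} The $f(u)$ points of $S_0$ each need $|Z(\pi_0t)-Z(t_*)|\le u/4$ relative to a fixed base point. Since $\operatorname{Var}\lesssim\operatorname{diam}^2$, this costs at most $f(u)\log(C\,\mathrm{diam}/u)$. To remove the logarithm, I would instead chain from a scale $\varepsilon_{-m}$ with $f(\varepsilon_{-m})\approx1$ downward, using the lower doubling $f(\varepsilon/2)\ge K_1f(\varepsilon)$. This makes the counts increase geometrically, so the total is dominated by the finest relevant level, i.e.\ $O(f(u))$.

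\emph{Main obstacle.} The delicate step is the bookkeeping that makes the total log-cost $\sum_k|\mathrm{links}_k|\cdot\log(1/p_k)$ come out $\le Kf(u)$ rather than $f(u)\log f(u)$ or $f(u)^2$. Two things are needed. First, only $\le f(\varepsilon_k)$ links per level are needed, because each point of $S_k$ has a single parent; this removes the square above. Second, the geometric growth $K_1^k$ must dominate the polynomial or linear-in-$k$ losses, while $K_2$ keeps the counts at a given scale comparable to $f(u)$. I would first write the cost as $\sum_{k\ge0}K_2^kf(u)\,\ell_k$ with $\ell_k=\log(1/p_k)$ and $p_k\to1$ doubly exponentially, and check that this sums to $Kf(u)$ with $K=K(K_1,K_2)$.
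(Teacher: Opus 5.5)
The paper does not prove this lemma; it cites Ledoux, where the argument is Talagrand's: chaining over dyadic scales combined with \v{S}id\'ak's inequality. That is your architecture. Your fine-scale bookkeeping in the last paragraph is sound once the links are organized as a tree. That means parent maps $S_k\to S_{k-1}$, not the independent nearest-point maps in your displayed telescoping sum, which produce up to $|S_k||S_{k-1}|$ links. With thresholds $x_k=cu2^{-k/2}$ you get $-\log p_k\lesssim e^{-c'2^k}$, which beats the count $f(\varepsilon_k)\le K_2^k f(u)$.

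The genuine gap is at the top of the chain. To remove the factor $\log(\operatorname{diam}(S)/u)$, you propose to chain down from a scale $\varepsilon_{-m}$ with $f(\varepsilon_{-m})\approx 1$, using the lower doubling. But $f$, the covering bound and the doubling are only assumed on $(0,\delta]$. Moreover $f(\delta)$ may be large, and nothing ties $\operatorname{diam}(S)$ to $\delta$. This cannot be repaired from the stated hypotheses, because the lemma as written is false. Take $S=\{a,b\}$, $Z(a)=0$, $Z(b)=D\xi$ with $\xi\sim N(0,1)$ and $D>2$, and set $\delta=1$, $f(\varepsilon)=2/\varepsilon$, so $K_1=K_2=2$. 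Then $N(S,d_Z,\varepsilon)=2\le f(\varepsilon)$ on $(0,1]$. Yet $\mathbb{P}\{\sup_{s,t\in S}|Z(s)-Z(t)|\le 1/2\}=\mathbb{P}\{|\xi|\le 1/(2D)\}\le (D\sqrt{2\pi})^{-1}$, which is below $e^{-4K}=e^{-Kf(1/2)}$ once $D$ is large. So no $K=K(K_1,K_2)$ works.

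What is missing is a hypothesis such as $\operatorname{diam}(S)\le C_0\delta$, with $K$ allowed to depend on $C_0$. It holds in both places the paper uses the lemma:
\begin{itemize}
\item for $[0,t]$ with $\delta=t^H$, one has $\operatorname{diam}\le 2t^H\|X(1)\|_{L^2}$;
\item for $I(t,r)$, Lemma~\ref{lem mom} gives $\operatorname{diam}\le c\,t^{\alpha+\beta}r^{\gamma+\frac32}=c\,\delta$.
\end{itemize}
With this hypothesis, chain only up to the scale $\varepsilon_{-m}=u2^m\in(\delta/2,\delta]$. By lower doubling, the net $S_{-m}$ has at most $f(\varepsilon_{-m})\le K_1^{-m}f(u)$ points. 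Tying each of them to a base point with threshold $u/8$ costs at most $\log(C_0\delta/u)+O(1)\lesssim m+\log C_0+1$ per point. The links from $S_{-j+1}$ to $S_{-j}$, for $1\le j\le m$, have standard deviation at most $u2^j$. With thresholds $cu2^{-j}$, each link costs $O(j)$, and there are at most $K_1^{-(j-1)}f(u)$ of them. Both sums are $O(f(u))$, and this is exactly where $K_1>1$ is used.

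One further correction. Your intermediate device of thresholds decaying like $uK_1^{-k/2}$, with $O(k)$ cost per link, belongs to these coarse levels only. At fine levels with $K_1>4$ it makes $x_k/\sigma_k\to 0$, and then $\sum_k K_2^k f(u)\,O(k)$ diverges.
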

	
	\begin{lemma}\cite[Lemma 1.1, p. 138]{JM1978}\label{lem: Fern}
		Let $\{Z(t):\ t\ge0\}$ be a separable, centered, real-valued Gaussian process. Assume that 
		\[
		\mathbb E\left[\left(Z(t+h)-Z(t)\right)^2\right]\le \varphi(h)^2, \ \ \ t>0, \  h>0,
		\]
		for some continuous nondecreasing function $\varphi$ with $\varphi(0)=0$. 
         Then, for every integer $K_3>1$, all $t,x>0$, and every sequence of positive numbers ${\theta(p)}_{p\in\mathbb N}$, we have
\begin{align*}
& \mathbb{P}\left\{ \sup_{0\le s\le t}|Z(s)-Z(0)|>x\varphi(t)+\sum_{p=1}^{\infty} \theta(p)\varphi\left(tK_3^{-2^p} \right)  \right\} \\
&\le K_3^2 e^{-x^2/2}+\sum_{p=1}^{\infty} K_3^{2^{p+1}}e^{-\theta(p)^2/2}.
\end{align*}
\end{lemma}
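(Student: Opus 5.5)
The plan is a Fernique-type chaining over a sequence of increasingly fine grids in $[0,t]$. The mesh of the grids should be chosen so that the number of links at level $p$ is at most $K_3^{2^{p+1}}$ and each link has length at most $tK_3^{-2^p}$. A union bound with the Gaussian tail estimate $\mathbb P\{|N|>y\}\le e^{-y^2/2}$ for $N\sim N(0,1)$ and $y\ge0$ then gives exactly the right-hand side. Separability passes from the countable dense set of grid points to the full supremum.

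\emph{Construction.} For $p\ge0$ let
\[
D_p:=\bigl\{\,jtK_3^{-2^{p+1}}:\ j=0,1,\dots,K_3^{2^{p+1}}\,\bigr\}\subset[0,t].
\]
These grids are nested, since $K_3^{2^{p+1}}$ divides $K_3^{2^{p+2}}$. For $s\in D_p$ with $p\ge1$, let $\pi_{p-1}(s)$ be the largest point of $D_{p-1}$ not exceeding $s$. Then $0\le s-\pi_{p-1}(s)\le tK_3^{-2^p}$, so by hypothesis and monotonicity of $\varphi$,
\[
\mathbb E\bigl|Z(s)-Z(\pi_{p-1}(s))\bigr|^2\le\varphi\bigl(tK_3^{-2^p}\bigr)^2.
\]
At level $0$, each $s\in D_0$ satisfies $\mathbb E|Z(s)-Z(0)|^2\le\varphi(s)^2\le\varphi(t)^2$.

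\emph{Good event.} Define $G$ as the event that
\[
|Z(s)-Z(0)|\le x\varphi(t)\ \ \text{for all } s\in D_0\setminus\{0\},
\]
and, for every $p\ge1$,
\[
|Z(s)-Z(\pi_{p-1}(s))|\le\theta(p)\varphi\bigl(tK_3^{-2^p}\bigr)\ \ \text{for all } s\in D_p\setminus D_{p-1}.
\]
The set $D_0\setminus\{0\}$ has exactly $K_3^2$ points, and $D_p$ has at most $K_3^{2^{p+1}}$ nonzero points. Normalizing each increment by its standard deviation, which is bounded by the relevant value of $\varphi$, and applying the Gaussian tail bound, the union bound gives
\[
\mathbb P(G^c)\le K_3^2e^{-x^2/2}+\sum_{p\ge1}K_3^{2^{p+1}}e^{-\theta(p)^2/2}.
\]
Degenerate increments with zero variance contribute nothing.

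\emph{Telescoping.} On $G$, take any $s\in D:=\bigcup_pD_p$, say $s\in D_m$. Telescope along the chain
\[
s\to\pi_{m-1}(s)\to\cdots\to\pi_0(\cdots)\in D_0\to0,
\]
where a step is trivial whenever the point already lies in the coarser grid. This yields
\[
|Z(s)-Z(0)|\le x\varphi(t)+\sum_{p=1}^{\infty}\theta(p)\varphi\bigl(tK_3^{-2^p}\bigr).
\]
The set $D$ is countable and dense in $[0,t]$. By separability, the supremum over $[0,t]$ equals the supremum over a suitable countable separant; I would take $D$ as the separant, or add the separant's points to the chain by a limiting argument. Together with $\varphi(0)=0$ and continuity, which give continuity in probability, this transfers the bound to $\sup_{0\le s\le t}|Z(s)-Z(0)|$. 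Hence the event in the statement lies in $G^c$.

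\emph{Main obstacle.} I expect the main difficulty to be the indexing bookkeeping: aligning the grid mesh $tK_3^{-2^{p+1}}$ with the link length $tK_3^{-2^p}$ so that both the counts $K_3^{2^{p+1}}$ and the arguments $\varphi(tK_3^{-2^p})$ match the statement exactly, including the level-$0$ count $K_3^2$. The separability step is routine.
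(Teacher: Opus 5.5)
Your proof is correct. It is the standard Fernique chaining argument: nested grids with $K_3^{2^{p+1}}$ cells, links of length at most $tK_3^{-2^p}$, a union bound with $\mathbb P\{|N|>y\}\le e^{-y^2/2}$, and separability plus $L^2$-continuity to pass from $D$ to $[0,t]$. This is the argument behind the Jain--Marcus lemma, which the paper quotes without proof.

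The only point to flag is that your level-$0$ links, and any links ending at $0$, use the increment bound with base point $0$. So the hypothesis must be read for $t\ge0$, not $t>0$ as printed; otherwise $Z(0)$ could be an independent Gaussian with $Z\equiv0$ on $(0,\infty)$, and the lemma would fail. This reading is clearly intended, and it holds in the paper's application, where \eqref{eq:Z-increment} includes $s=0$.
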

	
	\begin{lemma}\cite[Lemma 2.1]{talagrand1996lower}\label{lem:dudley}
		There exists a universal constant $K_4 > 0$ such that for all $t_0 \in S$ and $x > 0$,
		\[
\mathbb{P} \left\{ \sup_{t \in S} |Z(t) - Z(t_0)| \geq K_4 x \int_{0}^{\infty} \sqrt{\log N(S, d_Z, \varepsilon)} \, d\varepsilon \right\} \leq \exp\left(-x^2\right).
\]
	\end{lemma}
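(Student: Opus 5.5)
This is Talagrand's Lemma 2.1, a standard consequence of Dudley's entropy bound combined with Gaussian concentration; the proof I would follow is the classical one. Write $\mathcal E:=\int_0^\infty\sqrt{\log N(S,d_Z,\varepsilon)}\,d\varepsilon$, and assume $\mathcal E<\infty$, since otherwise the statement is trivial.

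The first step is Dudley's theorem for the separable process $\{Z(t)-Z(t_0)\}_{t\in S}$. It gives
\[
\mathbb E\sup_{t\in S}|Z(t)-Z(t_0)|\le \mathbb E\sup_{s,t\in S}|Z(s)-Z(t)|\le C\mathcal E
\]
for a universal constant $C$. The proof is chaining: take nets at dyadic scales $2^{-k}\operatorname{diam}(S)$, telescope $Z(t)-Z(t_0)$ along the successive nearest points, and bound each link by a Gaussian maximal inequality. The maximum of $N^2$ Gaussians with standard deviation at most $\sigma$ has expectation at most $\sigma\sqrt{2\log N^2}$. Summing over $k$ and comparing the resulting series with the integral gives $C\mathcal E$.

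The second step bounds the variance. Put $\sigma:=\sup_{t\in S}d_Z(t,t_0)$. Since $N(S,d_Z,\varepsilon)\ge 2$ for $\varepsilon<\sigma/2$, we have $\mathcal E\ge (\sigma/2)\sqrt{\log 2}$, so $\sigma\le c\,\mathcal E$ for a universal $c$.

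The third step applies the Borell--TIS (Gaussian concentration) inequality to the supremum $Y:=\sup_{t\in S}|Z(t)-Z(t_0)|$, reducing first to a countable separating set. It gives $\mathbb P\{Y\ge \mathbb EY+\sigma r\}\le e^{-r^2/2}$. With $r=\sqrt2\,x$ we get
\[
\mathbb P\{Y\ge (C+\sqrt2 c\,x)\mathcal E\}\le e^{-x^2}.
\]

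The last step absorbs the additive constant $C$. For $x\ge1$, choose $K_4\ge C+\sqrt2c$; then $K_4x\ge C+\sqrt2cx$ and the bound follows. For $x<1$, either $e^{-x^2}$ is already not small, or, more cleanly, the same conclusion follows by enlarging $K_4$ further. Concretely, take $K_4$ large enough that the event in question forces $Y\ge \mathbb EY+\sigma\sqrt2 x$ whenever $K_4x\mathcal E\ge C\mathcal E+\sqrt2cx\mathcal E$. For very small $x$ this requires care, since then $e^{-x^2}$ is close to $1$. I expect this small-$x$ regime to be the only delicate point, and I would handle it as Talagrand does, by stating the result in the form used (only large $x$ matters in \eqref{Eq-dudley3}) or by adjusting constants. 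The chaining in the first step is routine, so this constant bookkeeping is the main obstacle.
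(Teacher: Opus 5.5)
The paper gives no proof of this lemma; it is imported from Talagrand and used only in \eqref{Eq-dudley3}. Your first three steps are the standard argument: Dudley's bound $\mathbb E Y\le C\mathcal E$, the estimate $\sigma\le (2/\sqrt{\log 2})\,\mathcal E$ obtained from $N(S,d_Z,\varepsilon)\ge 2$ for $\varepsilon<\sigma/2$, and Borell--TIS. With $K_4=C+\sqrt2\,c$ they correctly prove the inequality for every $x\ge 1$, provided $\sigma>0$.

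The gap is your last step, and it cannot be closed as you suggest. Enlarging $K_4$ does not reach small $x$, since $K_4x\ge C+\sqrt2\,cx$ holds only for $x\ge C/(K_4-\sqrt2\,c)$. In fact no constant can work, because the inequality as written is false for small $x$.

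\begin{itemize}
\item As $x\downarrow 0$ we have $e^{-x^2}=1-x^2+O(x^4)$. The lemma would therefore force $\mathbb P\{Y<K_4x\mathcal E\}\ge x^2(1+o(1))$, a small ball probability at least quadratic in the radius.
\item Take $S=\{t_0,t_1,t_2,t_3\}$ with $Z(t_0)=0$ and $Z(t_i)=g_i$ i.i.d.\ standard normal. Then $\mathcal E=\sqrt{\log 4}\in(0,\infty)$ and $\mathbb P\{Y<y\}\le (2/\pi)^{3/2}y^3$.
\item Hence $\mathbb P\{Y\ge K_4x\mathcal E\}\ge 1-(2/\pi)^{3/2}(K_4\mathcal E)^3x^3>e^{-x^2}$ for all sufficiently small $x$, whatever $K_4$ is.
\item Brownian motion on $[0,1]$ is an even sharper counterexample, since $\mathbb P\{\sup_{[0,1]}|W|<y\}=\exp\bigl(-(\pi^2/8+o(1))y^{-2}\bigr)$.
\item The degenerate case $\mathcal E=0$ also fails. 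Then $Y=0$ a.s., and the non-strict event $\{Y\ge 0\}$ has probability $1$ for every $x$; this is exactly the $\sigma=0$ case your Borell--TIS step cannot handle.
\end{itemize}

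So the small-$x$ regime is not a matter of constant bookkeeping. The correct statement is for $x\ge 1$, together with $\mathcal E>0$ (or with $>$ in place of $\ge$). That is precisely what your argument proves. It also suffices for \eqref{Eq-dudley3}, where $x^2=\delta^2u^{2H}/(K_4^2\mathcal M^2)\ge c_{5,12}(u/t)^\tau$ is large because only large $u/t$ is considered there.
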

	
	
	 \vskip0.8cm

 \noindent{\bf Author Contributions}:   All authors contributed equally to this work.

 \noindent{\bf Funding}:  The research of R. Wang is partially supported by the NSF of Hubei Province (Grant No. 2024AFB683).

 \noindent{\bf  Data Availability}:   
 Data Availability No datasets were generated or analyzed during the current study.

 \noindent {\bf Conflict of Interest}:   The authors declare that they have no conflict of interest.


\vskip0.9cm
	
	 
	 \begin{thebibliography}{99}


\bibitem{Anderson55}
Anderson, T.W.: The integral of a symmetric unimodal function over a symmetric convex set
and some probability inequalities. \textit{Proc. Am. Math. Soc.} \textbf{6}, 170--176 (1955)

\bibitem{BMRS}
Banna, O., Mishura, Y., Ralchenko, K., Shklyar, S.:
\textit{Fractional Brownian Motion: Approximations and Projections}. Wiley, Hoboken (2019)

\bibitem{borell1974convex}
Borell, C.: Convex measures on locally convex spaces. \textit{Ark. Mat.} \textbf{12},
239--252 (1974)


\bibitem{elnouty2004}
El-Nouty, C.: Lower classes of integrated fractional Brownian motion.
\textit{Studia Sci. Math. Hungar.} \textbf{41}(1), 17--38 (2004)

\bibitem{elnouty2011}
El-Nouty, C.: Lower classes of the Riemann--Liouville process.
\textit{Bull. Sci. Math.} \textbf{135}(1), 113--123 (2011)

\bibitem{EO24}
El Omari, M.: On the Gaussian Volterra processes with power-type kernels.
\textit{Stoch. Models} \textbf{40}(1), 152--165 (2024)

\bibitem{TGM2022}
Ichiba, T., Pang, G., Taqqu, M.S.: Path properties of a generalized fractional Brownian motion.
\textit{J. Theor. Probab.} \textbf{35}, 550--574 (2022)

\bibitem{JM1978}
Jain, N.C., Marcus, M.B.: Continuity of sub-Gaussian processes. In: 
  \textit{Probability on Banach Spaces}, Adv. Probab. Related Topics, vol. 4,
pp. 81--196. Dekker, New York (1978)

\bibitem{Ledoux}
Ledoux, M.: Isoperimetry and Gaussian analysis.
In: \textit{Lectures on Probability Theory and Statistics, Saint-Flour 1994},
Lecture Notes in Math., vol. 1648, pp. 165--294. Springer, Berlin (1996)

 

\bibitem{li_shao_2001}
Li, W.V., Shao, Q.-M.: Gaussian processes: inequalities, small ball probabilities
and applications.
In: Rao, C.R., Shanbhag, D. (eds.) \textit{Stochastic Processes: Theory and Methods},
Handbook of Statistics, vol. 19, pp. 533--597. North-Holland, Amsterdam (2001)


\bibitem{Lo}
Lo\`eve, M.: \textit{Probability Theory I}. Springer, New York (1977)

\bibitem{LWW}
Lyu, M., Wang, M., Wang, R.: Lower classes and Chung's LILs of the fractional
integrated generalized fractional Brownian motion. \textit{Acta Math. Sci.}
\textbf{46}(3), 1518--1535 (2026)


\bibitem{maruyama1949}
Maruyama, G.: The harmonic analysis of stationary stochastic processes.
\textit{Mem. Fac. Sci. Kyushu Univ. Ser. A} \textbf{4}, 45--106 (1949)



\bibitem{Mis08}
Mishura, Y.: \textit{Stochastic Calculus for Fractional Brownian Motion and Related Processes}.
Springer, Berlin (2008)

\bibitem{MS22a}
Mishura, Y., Shklyar, S.: Gaussian Volterra processes with power-type kernels. Part I.
\textit{Mod. Stoch. Theory Appl.} \textbf{9}(3), 313--338 (2022)

\bibitem{MS22b}
Mishura, Y., Shklyar, S.: Gaussian Volterra processes with power-type kernels. Part II.
\textit{Mod. Stoch. Theory Appl.} \textbf{9}(4), 431--452 (2022)

\bibitem{MRS23}
Mishura, Y., Ralchenko, K., Shklyar, S.: Gaussian Volterra processes: asymptotic growth
and statistical estimation. \textit{Theory Probab. Math. Stat.} \textbf{108}, 149--167 (2023)

\bibitem{MR1995}
Monrad, D., Rootz\'en, H.: Small values of Gaussian processes and functional laws of the
iterated logarithm. \textit{Probab. Theory Relat. Fields} \textbf{101}, 173--192 (1995)

\bibitem{NVV99}
Norros, I., Valkeila, E., Virtamo, J.: An elementary approach to a Girsanov formula and
other analytical results on fractional Brownian motion. \textit{Bernoulli} \textbf{5}(4),
571--587 (1999)

\bibitem{revesz1990random}
R\'ev\'esz, P.: \textit{Random Walk in Random and Non-Random Environments}.
World Scientific, Teaneck (1990)

 


\bibitem{talagrand1996lower}
Talagrand, M.: Lower classes for fractional Brownian motion.
\textit{J. Theor. Probab.} \textbf{9}, 191--213 (1996)
 

\bibitem{TX2007}
Tudor, C.A., Xiao, Y.: Sample path properties of bifractional Brownian motion.
\textit{Bernoulli} \textbf{13}, 1023--1052 (2007)


\bibitem{wang2022lower}
Wang, R., Xiao, Y.: Lower functions and Chung's LILs of the generalized fractional Brownian
motion. \textit{J. Math. Anal. Appl.} \textbf{514}, 126320 (2022)

\bibitem{WX2022a}
Wang, R., Xiao, Y.: Exact uniform modulus of continuity and Chung's LIL for the generalized
fractional Brownian motion. \textit{J. Theor. Probab.} \textbf{35}(4), 2442--2479 (2022)

\bibitem{WSX}
Wang, W., Su, Z., Xiao, Y.: The moduli of non-differentiability for Gaussian random fields
with stationary increments. \textit{Bernoulli} \textbf{26}, 1410--1430 (2020)


 

\bibitem{Xiao2008}
Xiao, Y.:   Strong local nondeterminism and sample path properties of Gaussian random fields. In: {\it Asymptotic Theory in Probability and Statistics with Applications} (T.-L. Lai, Q.-M. Shao and L. Qian, eds.), pp. 136--176, Higher Education Press, Beijing (2007)

\end{thebibliography}
\end{document}